\newtheorem{theorem}{Theorem}
\newtheorem{corollary}[theorem]{Corollary}
\newtheorem{definition}{Definition}
\newtheorem{remark}{Remark}
\newtheorem{lemma}{Lemma}
\newtheorem{proposition}{Proposition}
\newcommand{\mint}{\mathop{\int\hskip -1,05em -\, \!\!\!}\nolimits}
\numberwithin{equation}{section}
\newenvironment{proof}{\smallskip\noindent\emph{Proof.}\hspace{1pt}}%
{\hspace{-5pt}{\nobreak\quad\nobreak\hfill\nobreak$\square$\vspace{8pt}%
\par}\smallskip\goodbreak}
\def\eqn#1$$#2$${\begin{falign}\label#1#2\end{falign}}
\newcommand\eps\varepsilon
\def\eqn#1$$#2$${\begin{equation}\label#1#2\end{equation}}
\newcommand{\be}{\begin{equation}}
\newcommand{\ee}{\end{equation}}
\newcommand{\supp}{\operatorname{supp}}
\newcommand{\const}{\operatorname{const}}
\newcommand{\snr}[1]{\lvert #1\rvert}
\newcommand{\nr}[1]{\lVert #1 \rVert}
\newcommand{\RN}{\mathbb{R}^{N}}
\newcommand{\N}{\mathbb{N}}
\def\name[#1, #2]{#1 #2}
\newcommand{\rif}[1]{(\ref{#1})}
\newcommand{\cd}[1]{\textcolor[rgb]{0.00,0.00,0.00}{#1}}
\newcommand{\oh}[1]{\textcolor[rgb]{0.00,0.00,0.00}{#1}}
\definecolor{ffqqqq}{rgb}{1.,0.,0.}
\definecolor{uuuuuu}{rgb}{0.26666666666666666,0.26666666666666666,0.26666666666666666}
\DeclareMathOperator{\diam}{diam}
\begin{document}
\title{Regularity for \oh{multi-phase} variational problems}

\author{
\textsc{Cristiana De Filippis\thanks{Mathematical Institute, University of Oxford, Andrew Wiles Building, Radcliffe Observatory Quarter, Woodstock Road, Oxford, OX26GG, Oxford, United Kingdom. E-mail:
      \texttt{Cristiana.DeFilippis@maths.ox.ac.uk}} \  \& Jehan Oh \thanks{\oh{Fakult\"at f\"ur Mathematik}, Universit\"at Bielefeld, Postfach 100131, D-33501 Bielefeld. E-mail: \texttt{\oh{joh@math.uni-bielefeld.de}}}}}

\date{\today
}

\maketitle
\thispagestyle{plain}

\vspace{-0.6cm}
\begin{abstract}
We prove $C^{1,\nu}$ regularity for local minimizers of the \oh{multi-phase} energy:
\begin{flalign*}
w \mapsto \int_{\Omega}\snr{Dw}^{p}+a(x)\snr{Dw}^{q}+b(x)\snr{Dw}^{s} \ dx,
\end{flalign*}
under sharp assumptions relating the couples $(p,q)$ and $(p,s)$ to the H\"older exponents of the modulating coefficients $a(\cdot)$ and $b(\cdot)$,  respectively.
\end{abstract}

       \vspace{5pt}                           %
\vspace{0.2cm}

\setlength{\voffset}{-0in} \setlength{\textheight}{0.9\textheight}

\setcounter{page}{1} \setcounter{equation}{0}


\section{Introduction and results}
The aim of this paper is to analyze the regularity properties of non-autonomous variational integrals
of the type 
\eqn{tipo}
$$
w \mapsto \int_{\Omega}F(x, Dw) \ dx\;,
$$
where $\Omega\subset \mathbb{R}^{n}$ is \oh{a bounded open domain with $n\ge 2$},
and emphasize a few new phenomena emerging when considering non-uniformly elliptic operators. Let us briefly review the situation. In the case of functionals 
satisfying standard polynomial growth and ellipticity of the type
\eqn{tipo1}
$$
F(x, Dw) \approx |Dw|^p \qquad \mbox{and} \qquad \partial_{zz} F(x, Dw) \approx |Dw|^{p-2} Id\;,
$$
as for instance
\eqn{tipo2}
$$
w \mapsto \int_{\Omega}a(x)|Dw|^p \ dx\;, \qquad 0 < \nu \leq a(x) \leq L\;, 
$$
the regularity of minimizers is \oh{well-understood}. In particular, assuming that the partial function $x \mapsto F(x, \cdot)$ is H\"older continuous with some exponent (\oh{for example, $a(\cdot)$ is locally H\"older continuous in the case of \rif{tipo2}}), then it turns out that the gradient of minima is locally H\"older continuous. This is a well established theory, both in the scalar and in the vectorial case, for which we refer for instance to \cite{KM1, KM2, Manth, dark}. The situation drastically changes when considering non-uniformly elliptic functionals. These are functionals of the type in where the ellipticity ratio
$$
\mathcal R(z, B) \coloneqq \frac{\sup_{x \in B}\,  \mbox{highest eigenvalue of}\ \partial_{zz}  F(x,z)}{\inf_{x \in B}\,\mbox{lowest eigenvalue of}\  \partial_{zz} F(x,z)} \;, 
$$
where $B\subset \Omega$ is a ball, might become unbounded with $|z|$. This is the case, for instance, of the double phase functional given by
\begin{flalign}\label{hmvp0}
W^{1,H(\cdot)}(\Omega)\ni w\mapsto \oh{\int_{\Omega}\snr{Dw}^{p}+a(x)\snr{Dw}^{q} \ dx}\;.
\end{flalign}
This functional has been introduced by Zhikov in the context of Homogenization and its integrand changes its growth - from $p$ to $q$-rate - depending on the fact that $x$ belongs to $\{a(\cdot)=0\}$ or not (here is where the terminology double phase stems from). In the first case we have, following a terminology introduced in \cd{\cite{double}}, the $p$-phase, in the other we have the $(p,q)$-phase. In the case of \rif{hmvp0}, the regularity of minimizers is regulated by a subtle interaction between the pointwise behaviour of the partial function $x \mapsto F(x, \cdot)$ and the growth assumption satisfied by $z \mapsto F(\cdot, z)$. For instance, as established in the work of Baroni, Colombo and Mingione \cite{bcm1, bcm2, bcm, double, colmin}, \oh{sufficient and necessary conditions} for regularity of minimizers of \oh{the functional \rif{hmvp0} are} that
\eqn{cond-double}
$$
a(\cdot) \in C^{0, \alpha}(\Omega) \qquad \mbox{and} \qquad \frac qp \leq 1 + \frac{\alpha}{n}\;.
$$
Specifically, if \rif{cond-double} holds, then minimizers of \oh{the functional \rif{hmvp0}} are locally $C^{1,\beta}$, for some $\beta >0$, otherwise, they can be even discontinuous; see also \cite{colmin2, sharp, ffm}. After these contributions, functionals with double phase type have become a topic of intense study, see for instance \cite{byun1, byunoh1, byunoh2, hasto, hasto1, ok1, ok2}.

The condition in \rif{cond-double} plays a role also when considering more general functionals of the type in \rif{tipo},under so called $(p,q)$-growth conditions, i.e.:
$$
|Dw|^p \lesssim F(x, Dw) \lesssim |Dw|^q \qquad \mbox{and} \qquad |Dw|^{p-2} Id\lesssim  \partial_{zz} F(x, Dw) \lesssim |Dw|^{q-2} Id\;.
$$
For this we refer to \cite{carkrispas, elemarmas, sharp}. Moreover, it intervenes in the validity of a corresponding Calder\'on-Zygmund theory \cite{colmin2}. We refer to the papers of Marcellini \cite{mar1, mar2, mar3} for more on general functionals with $(p,q)$-growth. 

The aim of this paper is to study a significant generalization of \oh{the functional \rif{hmvp0}}, considering a functional that exhibit three phases. We shall indeed consider the following multiphase Multi-Phase variational energy
\begin{flalign}\label{hmvp}
W^{1,H(\cdot)}(\Omega)\ni w\mapsto\mathcal{H}(w,\Omega)=\int_{\Omega}H(x, \oh{Dw}) \ dx,
\quad 1<p\cd{<} q \cd{\le} s\;, 
\end{flalign}
with
\begin{flalign}\label{hm}
H(x,z):=\snr{z}^{p}+a(x)\snr{z}^{q}+b(x)\snr{z}^{s},
\end{flalign}
and where the functions $a(\cdot)$ and $b(\cdot)$ satisfy \oh{the following assumptions}
\eqn{mathAB}
$$
a\in C^{0,\alpha}(\Omega), \ \ a(\cdot)\ge 0, \ \ \alpha \in (0,1], \qquad 
 b\in C^{0,\beta}(\Omega), \ \ b(\cdot)\ge 0, \ \ \beta \in (0,1]. 
$$
\cd{As not to trivialize the problem, we specifically focus on the case in which the strict inequality $\eqref{hmvp}_{2}$ holds.} The analysis of this functional then opens the way to that of functional exhibiting an arbitrary number of phases, and involves several subtle points. The main one can be described as follows. In the double phase case of \oh{the functional \rif{hmvp0}} the main game is to control the interaction between the potentially degenerate parte of the energy $a(x)|Dw|^q$ (here degenerate means that it can be $a(x)\equiv 0$) with the non-degenerate one $|Dw|^p$, that always provides a solid rate of ellipticity. This is done in \cite{bcm, double, colmin} via a careful comparison scheme \cd{built in} order to distinguish between the two phases. Here the situation changes and the game becomes more delicate. Indeed, the problem is to control the interaction between the two possibly degenerate parts of the energy, that is $a(x)|Dw|^q$ and $b(x)|Dw|^s$. A new aspect in fact emerges here. We see that, in presence of a finer structure, conditions of the type in \rif{cond-double} can be in a sense relaxed. In fact, an immediate application of \rif{cond-double} would \oh{provide} us with the conditions $a,b\in C^{0,\alpha}(\Omega)$ with $s/p \leq 1 +\alpha/n$, by considering the global regularity of $x \mapsto F(x, \cdot)$. Instead, we see that the new condition coming into the play takes into account more precisely the way the presence of $x$ affects the growth with respect to the gradient variable. Specifically, we shall assume that 
\begin{flalign}\label{rat}
\frac{q}{p}\le1+\frac{\alpha}{n}\quad \mathrm{and} \quad \frac{s}{p}\le 1+\frac{\beta}{n}.
\end{flalign}
In other words, less regularity is needed on the coefficient affecting the $q$-growth, intermediate part of the energy density. Our main results is indeed the following main result of the paper (see \oh{the next section} for more definitions and notation):
\begin{theorem}[$C^{1,\nu}$-local regularity]\label{T1}
Let $u$ be a local minimizer of \oh{the functional \eqref{hmvp} under} assumptions \eqref{mathAB} and \eqref{rat}. Then there exists $\nu=\nu(\texttt{data})\in (0,1)$ such that $u \in C^{1,\nu}_{\mathrm{loc}}(\Omega)$.
\end{theorem}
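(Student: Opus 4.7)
The plan is to combine a non-degenerate approximation scheme with a four-regime excess decay argument, in the spirit of the treatment of the double-phase case by Baroni, Colombo and Mingione in \cite{bcm, double, colmin}. First I would regularize $H$ by a small uniformly elliptic perturbation (for instance replacing $H(x,z)$ by $H(x,z)+\varepsilon(1+\snr{z}^{2})^{s/2}$) so that the regularized minimizers $u_{\varepsilon}$ satisfy standard $s$-growth and are smooth by classical Schauder theory; all subsequent estimates will be derived uniformly in $\varepsilon$ and then passed to the limit via the convergence $u_{\varepsilon}\to u$.

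Next I would establish a Caccioppoli inequality for $u_{\varepsilon}$ written in terms of the full $H$-energy, and invoke Gehring's lemma to obtain a small higher-integrability exponent $\delta_{0}>0$ for $H(x,\dd u_{\varepsilon})$. The structural core is then the phase classification: on each ball $B_{\rho}(x_{0})\subset\Omega$, declare $a$ \emph{inactive} if $\sup_{B_{\rho}(x_{0})} a \leq M[a]_{0,\alpha}\rho^{\alpha}$ (so that $a$ is uniformly small on the ball, possibly vanishing), and \emph{active} otherwise (in which case, by H\"older continuity, $\inf_{B_{\rho}(x_{0})} a$ is comparable to $\sup_{B_{\rho}(x_{0})} a$), and classify $b$ analogously. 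This produces four regimes that need to be treated independently.

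In each regime I would compare $u_{\varepsilon}$ on $B_{\rho}(x_{0})$ with the minimizer $v$ of the corresponding \emph{frozen} functional, where the active coefficients are replaced by their infima on the ball and the inactive coefficients are discarded up to an $O(\rho^{\alpha})$ or $O(\rho^{\beta})$ error. Such a frozen problem is autonomous with standard growth, so $v$ enjoys $C^{1,\gamma}$ estimates and an excess decay of the form
\begin{equation*}
\int_{B_{\tau\rho}(x_{0})}\snr{\dd v-(\dd v)_{B_{\tau\rho}(x_{0})}}^{p}\,dx \leq C\tau^{p+\gamma}\int_{B_{\rho}(x_{0})}\snr{\dd v-(\dd v)_{B_{\rho}(x_{0})}}^{p}\,dx.
\end{equation*}
The comparison error is then controlled using the H\"older continuity of $a,b$ together with the higher integrability from the previous step; here the sharp hypotheses \eqref{rat} are precisely what is needed to absorb the perturbations $\rho^{\alpha}\int\snr{\dd u_{\varepsilon}}^{q}$ and $\rho^{\beta}\int\snr{\dd u_{\varepsilon}}^{s}$ into the main $p$-part of the energy. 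Iterating the resulting excess decay at dyadically shrinking scales and invoking Campanato's embedding then produces $\dd u\in C^{0,\nu}_{\loc}$, which is the desired conclusion.

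The main obstacle is the regime in which $a$ and $b$ are simultaneously active, together with the mixed regime in which exactly one of the two coefficients is active. There one must perform two independent freezings at two different H\"older scales and verify that the perturbation coming from the second coefficient does not spoil the natural decay rate dictated by the first. The genuinely new feature compared with \cite{bcm, double, colmin} is that \eqref{rat} decouples the conditions on $q/p$ and $s/p$: the intermediate coefficient $a(\cdot)$ must be controlled only through the pair $(\alpha,q)$, and never through $\alpha$ paired with $s$. Checking that this decoupling is respected by the comparison scheme in each of the four regimes, in particular that the $q$-phase errors really scale with $\rho^{\alpha}$ and the $s$-phase errors with $\rho^{\beta}$, is, I expect, the technical heart of the proof.
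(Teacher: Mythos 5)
Your overall blueprint — four-regime phase classification, Caccioppoli plus Gehring, comparison with frozen minimizers, Campanato embedding — matches the paper in spirit, but there are several genuine gaps that would prevent your argument from closing as written.

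The most serious one concerns the iteration. You propose to ``iterate the resulting excess decay at dyadically shrinking scales,'' but this tacitly assumes the ball $B_{\tau^{k}\rho}(x_{0})$ remains in the same regime as $B_{\rho}(x_{0})$, which is false: as the radius shrinks, a ball that was in the fully degenerate phase can move to $\texttt{deg}_{\alpha}$ or $\texttt{deg}_{\beta}$ and then to $\texttt{ndeg}$, and each regime has its own decay constant. The paper handles this with a \emph{nested exit-time} argument (Section 5, Step 2): one exploits that the classification is monotone under shrinking (if $a_{i}(B_{r})>4[a]_{0,\alpha}r^{\alpha-\gamma_{a}}$, the same holds for all smaller concentric balls, since $a_{i}$ increases and the threshold decreases), iterates the $\texttt{deg}$-estimate until it first fails, then switches to the $\texttt{deg}_{\alpha}$ or $\texttt{deg}_{\beta}$ estimate and iterates until it fails, and finally switches to $\texttt{ndeg}$, which is stable. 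Without this structure you have no mechanism to glue the decay estimates across a regime change, and a naive dyadic iteration breaks down at the first transition.

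A second gap: the theorem claims $\nu=\nu(\texttt{data})$, not $\nu=\nu(\texttt{data}(\Omega_{0}))$. A single pass through your scheme delivers only an exponent depending on $\texttt{data}(\Omega_{0})$ (through the Gehring exponent and local $L^{\infty}$ bounds). The paper's proof is two-stage: it first obtains $Du\in C^{0,\nu'}_{\mathrm{loc}}$ with $\nu'=\nu'(\texttt{data}(\Omega_{0}))$, deduces $Du\in L^{\infty}_{\mathrm{loc}}$, and then redoes the comparison with the solution of the frozen problem \eqref{final}, using the Bounded Slope Condition (\cite{boubra}) to get a uniform gradient bound on the comparison map. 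This second pass produces a decay rate, and hence a H\"older exponent, that depends only on $\texttt{data}$. Your sketch omits this step entirely, and without it the asserted dependence of $\nu$ is not reached. Two further deviations are worth noting but are less critical: the paper does not regularize the functional by an $\varepsilon$-perturbation, working instead directly with $u$ via the quantitative harmonic approximation Lemma \ref{quahar} (proved by Lipschitz truncation), which avoids having to control $\varepsilon$-dependence in the phase classification; and the paper separates the argument into a Morrey decay (Theorem \ref{T0}) followed by an excess decay, whereas your sketch merges these, which would make the bookkeeping of the various exponents harder to carry out.
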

We remark that the sharpness of both conditions in \rif{rat} can be obtained by the same counterexamples in \cite{sharp, ffm}. Moreover, as it is well-known from the \oh{regularity theory for the standard $p$-Laplacean}, the one in Theorem \ref{T1} is the maximal regularity obtainable for $u$. 

A worth singling-out intermediate \oh{result} towards the proof of Theorem \ref{T1} is the following intrinsic Morrey decay estimate, which reduces to a classical estimate in the case of the $p$-Laplacean and that extends to the multi phase case the one proved in \cite{bcm,double, colmin} for minima of functionals with a double phase. 
\begin{theorem}[Intrinsic Morrey Decay]\label{T0}
Let $u$ be a local minimizer of \oh{the functional \eqref{hmvp} under} assumptions \eqref{mathAB} and \eqref{rat}. Then, for every $\vartheta \in (0,n)$, there exists a positive constant $c=c(\texttt{data}(\Omega_{0}),\vartheta)$ such that the decay estimate
\begin{flalign}\label{mor}
\int_{B_{\rho}}H(x,Du) \ dx \le c\left(\frac{\rho}{r}\right)^{n-\vartheta}\int_{B_{r}}H(x,Du) \ dx
\end{flalign}
holds whenever $B_{\rho}\subset B_{r}\Subset \Omega_{0}$ are concentric balls with \oh{$0< \rho \le r \le 1$}.
\end{theorem}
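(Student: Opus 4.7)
The plan is to extend the two-phase comparison scheme of \cite{bcm, double, colmin} to the present three-phase setting, based on a four-way classification of balls. Fix $B_{r}=B_{r}(x_{0})\Subset \Omega_{0}$ with $r\le 1$ and a large threshold $M$ to be determined. Depending on whether $\inf_{B_{r}}a$ dominates $M[a]_{0,\alpha}r^{\alpha}$ and, independently, whether $\inf_{B_{r}}b$ dominates $M[b]_{0,\beta}r^{\beta}$, the ball is assigned to one of four phases: the pure $p$-phase (both coefficients effectively negligible), two mixed $(p,q)$- and $(p,s)$-phases (exactly one of $a,b$ non-degenerate), and the full $(p,q,s)$-phase. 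In each case I would freeze the degenerate coefficients to zero and the non-degenerate ones at their infimum on $B_{r}$, and introduce a comparison map $v\in u + W_{0}^{1,H(\cdot)}(B_{r})$ minimizing the resulting frozen, autonomous energy. Under the bounds \eqref{rat}, classical Uhlenbeck/Lieberman-type regularity theory for autonomous $(p,q,s)$-integrands provides Lipschitz estimates for $v$ and in particular the optimal decay
$$\int_{B_{\rho}}H_{\mathrm{fr}}(Dv)\,dx \le c\left(\frac{\rho}{r}\right)^{\!n}\int_{B_{r}}H_{\mathrm{fr}}(Dv)\,dx,\qquad 0<\rho\le r,$$
where $H_{\mathrm{fr}}$ denotes the frozen integrand; moreover minimality gives $\int_{B_{r}}H_{\mathrm{fr}}(Dv)\,dx \le c\int_{B_{r}}H(x,Du)\,dx$.

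Next, by testing minimality of $u$ against $v$ and exploiting strict convexity of $H$, I would derive an energy gap of the form
$$\int_{B_{r}}\snr{Du-Dv}^{p}+a(x)\snr{Du-Dv}^{q}+b(x)\snr{Du-Dv}^{s}\,dx \lesssim [a]_{0,\alpha}r^{\alpha}\!\int_{B_{r}}\snr{Du}^{q}\,dx+[b]_{0,\beta}r^{\beta}\!\int_{B_{r}}\snr{Du}^{s}\,dx,$$
the $a$- and $b$-terms being truly present only in those phases where the corresponding coefficient was frozen to zero. The central analytic step is then to reabsorb the right-hand side into a small super-linear power of $\int_{B_{r}}H(x,Du)\,dx$. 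This is exactly where the bounds in \eqref{rat} intervene: a Gehring-type higher integrability for $Du$ --- which I would first establish via a Caccioppoli inequality tailored to the multi-phase integrand --- combined with Sobolev--Poincar\'e and H\"older's inequality lets one convert $q/p\le 1+\alpha/n$ into
$$[a]_{0,\alpha}r^{\alpha}\!\int_{B_{r}}\snr{Du}^{q}\,dx \ \le\ c\,\snr{B_{r}}^{\sigma}\!\left(\int_{B_{r}}H(x,Du)\,dx\right)^{\!1+\sigma}$$
for some small $\sigma=\sigma(\texttt{data})>0$, and analogously for the $b$-term under $s/p\le 1+\beta/n$.

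Putting these ingredients together yields a perturbed decay of the form $\Phi(\rho)\le c(\rho/r)^{n}\Phi(r)+c\,\Phi(r)^{1+\sigma}$ for $\Phi(t):=\int_{B_{t}}H(x,Du)\,dx$, and a standard iteration lemma upgrades this to the Morrey bound \eqref{mor} with arbitrary loss $\vartheta\in(0,n)$, after possibly shrinking the radius depending on $\vartheta$. The main obstacle is the uniform handling of the four phase alternatives: when $B_{r}$ is in the pure $p$-phase the full three-phase energy of $u$ must be controlled by the $p$-Dirichlet energy of $v$, which forces one to show that $a(x)\snr{Du}^{q}$ and $b(x)\snr{Du}^{s}$ remain negligible throughout the iteration. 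The sharp balance between H\"older exponents and growth ratios in \eqref{rat} is precisely what makes this possible, but propagating it cleanly across the four phases --- and in particular ensuring that the two error terms do not couple destructively --- requires delicate bookkeeping, and is the technical heart of the argument.
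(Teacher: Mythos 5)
You take a genuinely different route from the paper. The paper follows the harmonic-approximation scheme of \cite{bcm}: after rescaling ($v=u/E$, $E^p=\mint_{B_{2r}}H(x,Du)\,dx$), it bounds the Euler--Lagrange error of $v$ against the frozen integrand in each of the four phases, applies the quantitative approximation Lemma~\ref{quahar} to get a $\tilde H$-harmonic replacement with $\mint_{B_r}\widetilde{\mathcal V}(Dv,D\tilde h)^2\,dx\le c\,\varepsilon^m$, and then uses Proposition~\ref{p0} to obtain a one-step decay below a phase-dependent threshold radius. You instead propose the direct comparison/convexity-gap method of \cite{double,colmin}. Both blueprints are legitimate, but the paper's Euler--Lagrange route is robust precisely where yours is fragile: the error functional pairs $|Dv|^{q-1}$ with $D\varphi$, and after H\"older one only needs $\mint_{B_r}a(x)E^{q-p}|Dv|^q\,dx\le 1$, which is automatic from the definition of $E$; no $L^q$ bound on $Du$ is ever required.

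Two concrete issues. First, the gap estimate should carry $Dv$, not $Du$, on the right-hand side: combining minimality of $u$ with the Euler--Lagrange equation of the frozen minimizer $v$ and convexity leaves
\begin{flalign*}
\int_{B_r}\big|V_p(Du)-V_p(Dv)\big|^2\,dx \ \lesssim\ \int_{B_r}a(x)\big(|Dv|^q-|Du|^q\big)+b(x)\big(|Dv|^s-|Du|^s\big)\,dx \ \le\ \int_{B_r}a(x)|Dv|^q+b(x)|Dv|^s\,dx,
\end{flalign*}
which is the whole point: $v$ solves an autonomous problem and enjoys interior $L^\infty$-gradient bounds (Proposition~\ref{p0}), while $|Du|^q$ need not even be locally integrable. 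Second, and more seriously, your reabsorption $[a]_{0,\alpha}r^\alpha\int_{B_r}|Du|^q\,dx\lesssim\big(\int_{B_r}H(x,Du)\,dx\big)^{1+\sigma}$ does not follow from Gehring, Sobolev--Poincar\'e and H\"older alone. Gehring gives $Du\in L^{p(1+\delta_g)}_{\rm loc}$, but $q$ may exceed $p(1+\delta_g)$, and $\delta_g\ge\alpha/n$ is not guaranteed. The paper handles exactly this via the $L^\infty_{\rm loc}$ bound of $u$ (Lemma~\ref{L4}) when $n\ge p(1+\delta_g)$, and via the $C^{0,\lambda_g}$ Sobolev--Morrey bound when $n<p(1+\delta_g)$, with the exponents $\gamma_a,\gamma_b$ in \eqref{ga}--\eqref{gb} calibrated so that the Caccioppoli estimates in Corollary~\ref{C0} absorb the $q$- and $s$-growth. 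Your proposal does not address this dichotomy, and without it the central estimate can fail.

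Finally, the iteration. A single perturbed inequality $\Phi(\rho)\le c(\rho/r)^n\Phi(r)+c\,\Phi(r)^{1+\sigma}$ together with ``a standard iteration lemma'' understates the difficulty, because the contraction factor, the frozen comparison integrand, and the threshold radius are all phase-dependent, and the phase of a ball can change as the radius shrinks. The paper exploits a partial ordering of phases under shrinking ($\texttt{deg}\to\texttt{deg}_\alpha$ or $\texttt{deg}_\beta\to\texttt{ndeg}$, with $\texttt{ndeg}$ stable) to construct a double nested exit time, producing the factors $\tau_p,\tau_s,\tau_q,\tau_0$ and splicing them across five radius ranges. You correctly identify this propagation across phases as the technical heart, but the nested exit time --- rather than a standard iteration --- is precisely what makes it work, and your scheme would need to reproduce it.
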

Let us quickly describe the techniques we are employing to obtain the aforementioned theorems. The starting point is the recent proof of regularity of minimizers of double-phase variational problems \cd{appeared} in \cite{bcm}, and based on a suitable use of harmonic type approximations lemmas (see also \cite{colmin} for a first version). This is just a general blueprint we move from to treat the the real new difficulty here. Indeed, as we are dealing here with the presence of several phase transitions, and we have to carefully handle the regularity of solutions on the zero sets $\{a(x)=0\}$ and $\{b(x)=0\}$, that is, when the functional tends to loose part of its ellipticity properties and switch their kind of ellipticity. Therefore we have to handle the presence of two different transitions. We come up with a delicate scheme of alternatives and of nested exit time arguments, carefully controlling the interaction between the two phase transitions. It is then clear that the techniques introduced in this paper allow to prove regularity results for functionals with an arbitrary large numbers of \oh{phases, for instance,}
$$
w \mapsto \int_{\Omega} \left[|Dw|^p + \sum_{i=1}^m a_i(x)|Du|^{p_i}\right] \, dx \,, \quad \oh{1<} p \cd{<} p_1 \le \oh{\cdots }\le p_m
$$
and $a_i(\cdot)\in \oh{C^{0, \alpha_i} (\Omega)}$.  
\section{Notation and preliminaries}\label{2}
In this section we establish some basic notation that we are going to use for the rest of the paper. 
As in the Introduction, 
\oh{$\Omega$ will denote an open subset of $\mathbb{R}^{n}$ with $n\ge 2$}. As usual, we shall denote by $c$ a general constant larger than one, which can vary from line to line. Relevant dependencies from certain parameters will be emphasized using brackets, i.e.: $c=c(n,p,q,s)$ means that $c$ depends on $n,p,q,s$. We denote with $B_{r}(x_{0})=\left\{x \in \mathbb{R}^{n}\colon \ \snr{x-x_{0}}<r\right\}$ the $n$-dimensional open ball centered at $x_{0}$ and with radius $r>0$; when non relevant or clear from the context, we will omit to indicate the centre as follows: $B_{r}=B_{r}(x_{0})$. When not differently specified, in the same context, balls with different radius will share the same center. If $A\subset \mathbb{R}^{n}$ is any measurable subset with finite and positive Lebesgue's measure $\snr{A}>0$ and $f\colon A \oh{\to} \mathbb{R}^{N}$, $N\ge 1$ is a measurable map, we shall denote its integral average over $A$ as
\begin{flalign*}
\oh{(f)_{A} = \mint_{A}f(x) \ dx=\frac{1}{\snr{A}}\int_{A}f(x) \ dx.}
\end{flalign*}
\oh{When $A=B_r$, we shall write
\begin{flalign*}
(f)_{r} := (f)_{B_r} = \mint_{B_r}f(x) \ dx=\frac{1}{\snr{B_r}}\int_{B_r}f(x) \ dx.
\end{flalign*}}
The integrand $H(\cdot)$ has already been defined in 
\eqref{hm}. With abuse of notation we shall denote $H(x,z)$ when $z \in \mathbb{R}^{n}$ and when $z \in \mathbb{R}$, that is when $z$ is a scalar, so that we shall intend both $H \colon \Omega \times \mathbb{R}^{n} \to [0, \infty)$ and 
$H \colon \Omega \times \mathbb{R} \to [0, \infty)$. The modulating coefficients $a(\cdot)$ and $b(\cdot)$ will always satisfy \eqref{mathAB}. Here we recall that, if $f\colon \Omega \oh{\to} \mathbb{R}$ is any $\gamma$-H\"older continuous map \oh{with} $\gamma \in (0,1)$ and $A \subset \Omega$, then its H\"older seminorm is defined as
\begin{flalign*}
[f]_{0,\gamma;A}=\sup_{x,y\in A, \ x \not = y}\frac{\snr{f(x)-f(y)}}{\snr{x-y}^{\gamma}}, \quad [f]_{0,\gamma}=[f]_{0,\gamma;\Omega}.
\end{flalign*}
We are going to use several tools from the Orlicz space setting, therefore we start with the following preliminaries.
\begin{definition}\label{defy}
A function $\varphi\colon [0,\infty)\to [0,\infty)$ is said to be \oh{a Young function} if it satisfies the following conditions: $\varphi(0)=0$ and there exists the derivative $\varphi'$, which is right-continuous, non decreasing and satisfies
$$
\oh{ \varphi'(0)=0, \quad \varphi'(t)>0 \quad \mbox{for} \ \, t>0, \quad \mbox{and} \quad \lim_{t\to \infty}\varphi'(t)=\infty.}
$$
\end{definition}
\begin{remark}\label{remy}
\emph{In order to extrapolate good regularity properties for minimizers of functionals with \oh{$\varphi$-growth}, we need to assume something more. Precisely, from now on, in addition to the basic assumptions listed in Definition \ref{defy} we will also suppose that $\varphi \in C^{1}[0,\infty)\cap C^{2}(0,\infty)$ and that
\eqn{uniphi}
$$i_{\varphi} \leq \frac{t\varphi''(t)}{\varphi'(t)} \leq s_{\varphi}\qquad \mbox{unifornly in $t$}\,.$$
This is equivalent to the so-called $\Delta_{2}$ condition, since $t\mapsto \varphi(t)$ is non decreasing, see \cite{diestrver}, Section 2. }
\end{remark}

\begin{definition}
Let $\varphi$ be a Young function in the sense of Definition \ref{defy} and Remark \ref{remy}. Given $\Omega \subset \mathbb{R}^{n},$ the Orlicz space $L^{\varphi}(\Omega)$ is defined as
\begin{flalign*}
L^{\varphi}(\Omega)=\left \{u\colon \Omega \to \mathbb{R} \ \mathrm{such \ that \ }\int_{\Omega}\varphi(\oh{|u|}) \ dx 
<\infty\right \}
\end{flalign*}
and, consequently,
\begin{flalign*}
W^{1,\varphi}(\Omega)=\left \{u\in W^{1,1}(\Omega)\cap L^{\varphi}(\Omega)\ \mathrm{such \ that \ }Du \in L^{\varphi}(\Omega, \mathbb{R}^{N})\right\}.
\end{flalign*}
The definitions of the variants $W^{1,\varphi}_{0}(\Omega)$ and $W^{1,\varphi}_{\mathrm{loc}}(\Omega)$ come in an obvious way from the one of $W^{1,\varphi}(\Omega)$.
\end{definition}
In connection to $H(\cdot)$, we also consider the following Orlicz-Musielak-Sobolev space
\begin{flalign}\label{hspace}
W^{1,H(\cdot)}(\Omega)=\left\{u \in W^{1,1}\oh{(\Omega)} \colon \ H(\cdot,Du)\in L^{1}(\Omega) \right\},
\end{flalign}
with local variant defined in an obvious way and $W^{1,H(\cdot)}_{0}(\Omega)=W^{1,H(\cdot)}(\Omega)\cap W^{1,p}_{0}(\Omega)$; we refer to \cite{bcm, hasto, hasto1} for more on such spaces. 

For later uses, we introduce also the auxiliary Young functions
\begin{flalign}
\begin{cases}\label{yfs}
H_{0}(z)=\snr{z}^{p}+a_{0}\snr{z}^{q}+b_{0}\snr{z}^{s},\\
H_{0}^{s}(z)=\snr{z}^{p}+b_{0}\snr{z}^{s},\\
H_{0}^{q}(z)=\snr{z}^{p}+a_{0}\snr{z}^{q},\\
H_{0}^{p}(z)=\snr{z}^{p}.
\end{cases}
\end{flalign}
The values of the constants $a_{0},b_{0}\ge 0$ will vary according to the necessities, but all the estimates we eventually get are independent on their value.\\ 
In the following we will often use the vector field
\begin{flalign}\label{vf}
V_{t}(z)=\snr{z}^{(t-2)/2}z, \quad t \in \{p,q,s\}.
\end{flalign}
\oh{We recall from \cite{diestrver}, important features of \eqref{vf}: there exists $c=c(n,t)>0$ such that
\begin{flalign}\label{control_2}
\snr{V_{t}(z_{1})-V_{t}(z_{2})}^2 &\le c \left( \snr{z_{1}}^{t-2}z_{1}-\snr{z_{2}}^{t-2}z_{2} \right) \cdot \left( z_{1}-z_{2} \right),\\
\snr{V_{t}(z_{1})-V_{t}(z_{2})}&\sim (\snr{z_{1}}+\snr{z_{2}})^{\frac{t-2}{2}}\snr{z_{1}-z_{2}},\label{control3}
\end{flalign}
where the constants implicit in \eqref{control3} depend only on $n,t$ and, for all $z \in \mathbb{R}^{n}$
\begin{flalign}\label{control}
\snr{V_{t}(z)}^{2}=\snr{z}^{t} \quad .
\end{flalign}
For later uses, we introduce the following auxiliary functions
\begin{flalign}
\begin{cases}\label{vfs}
\mathcal{V}_{0}(z_{1},z_{2})^{2}=\snr{V_{p}(z_{1})-V_{p}(z_{2})}^{2}+a_{0}\snr{V_{q}(z_{1})-V_{q}(z_{2})}^{2}+b_{0}\snr{V_{s}(z_{1})-V_{s}(z_{2})}^{2},\\
\mathcal{V}_{0}^{s}(z_{1},z_{2})^{2}=\snr{V_{p}(z_{1})-V_{p}(z_{2})}^{2}+b_{0}\snr{V_{s}(z_{1})-V_{s}(z_{2})}^{2},\\
\mathcal{V}_{0}^{q}(z_{1},z_{2})^{2}=\snr{V_{p}(z_{1})-V_{p}(z_{2})}^{2}+a_{0}\snr{V_{q}(z_{1})-V_{q}(z_{2})}^{2},\\
\mathcal{V}_{0}^{p}(z_{1},z_{2})^{2}=\snr{V_{p}(z_{1})-V_{p}(z_{2})}^{2}.
\end{cases}
\end{flalign}
}
Let us also recall some important tools in regularity. The first one is an iteration lemma from \cite{giagiu1}.
\begin{lemma}\label{iter}
Let $h\colon [\rho, R_{0}]\to \mathbb{R}$ be \oh{a non-negative bounded function} and $0<\theta<1$, $0\le A$, $0<\beta$. Assume that
$
h(r)\le A(d-r)^{-\beta}+\theta h(d)
$
for $\rho \le r<d\le R_{0}$. Then
$
h(\rho)\le cA/(R_{0}-\rho)^{-\beta}
$ holds, 
where $c=c(\theta, \beta)>0$.
\end{lemma}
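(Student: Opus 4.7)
The plan is to run the standard geometric iteration on the hypothesis, building a sequence of radii that marches from $\rho$ to $R_0$ with geometrically increasing steps, so that each application of the assumed recurrence contributes a term belonging to a convergent series.

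Concretely, I would fix a parameter $\tau \in (0,1)$ (to be chosen) and set $r_0 = \rho$, $r_{i+1} = r_i + (1-\tau)\tau^i (R_0-\rho)$, so that $r_i \nearrow R_0$ and the increments satisfy $r_{i+1}-r_i = (1-\tau)\tau^i(R_0-\rho)$. Applying the hypothesis with $r = r_i$ and $d = r_{i+1}$ yields
\begin{flalign*}
h(r_i) \le \frac{A}{(1-\tau)^{\beta}(R_0-\rho)^{\beta}}\,\tau^{-i\beta} + \theta\, h(r_{i+1}).
\end{flalign*}
Iterating this inequality $k$ times gives
\begin{flalign*}
h(\rho) \le \frac{A}{(1-\tau)^{\beta}(R_0-\rho)^{\beta}}\sum_{i=0}^{k-1}\bigl(\theta\tau^{-\beta}\bigr)^{i} + \theta^{k}\,h(r_k).
\end{flalign*}

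The decisive choice is to pick $\tau \in (0,1)$ with $\theta\tau^{-\beta} < 1$, i.e.\ $\tau > \theta^{1/\beta}$; such a $\tau$ exists because $\theta < 1$. Then the geometric sum on the right is bounded by a constant $c = c(\theta,\beta)$, while the boundedness of $h$ combined with $\theta^{k}\to 0$ as $k\to\infty$ kills the remainder term. Passing to the limit $k\to\infty$ produces the desired estimate $h(\rho) \le c\,A\,(R_0-\rho)^{-\beta}$.

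I do not expect any real obstacle here: the only delicate point is guaranteeing that the tail $\theta^{k} h(r_k)$ vanishes, and this is precisely where the hypothesis that $h$ is bounded on $[\rho,R_0]$ is used (together with the strict inequality $\theta<1$). The freedom in choosing $\tau$ above $\theta^{1/\beta}$ is what makes the geometric series summable, and the factor $(1-\tau)^{-\beta}$ gets absorbed into the final constant $c(\theta,\beta)$.
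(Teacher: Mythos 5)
Your proof is correct and is precisely the standard argument for this classical iteration lemma (the paper does not reprove it but cites Giaquinta--Giusti \cite{giagiu1}, where essentially this same geometric-iteration argument appears). The key points are all in place: the radii $r_k = \rho + (R_0-\rho)(1-\tau^k)$ stay strictly below $R_0$ so the hypothesis applies at each step, the choice $\tau\in(\theta^{1/\beta},1)$ makes $\theta\tau^{-\beta}<1$ so the geometric series converges to a constant depending only on $\theta,\beta$, and boundedness of $h$ kills the remainder $\theta^k h(r_k)$. One small remark: the inequality as printed in the lemma, $h(\rho)\le cA/(R_0-\rho)^{-\beta}$, contains a typo (the $/$ and the $-\beta$ cancel); the intended and proved conclusion is $h(\rho)\le cA(R_0-\rho)^{-\beta}$, which is what you establish.
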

Along the proof we shall make an intensive use of the regularity properties of $\varphi$-harmonic maps, so we recall definition and some reference estimates from \oh{Lemma 5.8 and Theorem 6.4 in \cite{diestrver}}.
\begin{definition}\label{defphih}
Let $U\Subset \Omega$ be an open set and $u_{0} \in W^{1,\varphi}_{\mathrm{loc}}(\Omega,\RN)$ be any function. With $\varphi$-harmonic map, we mean a map $h\in u_{0}+W^{1,\varphi}_{0}(U,\RN)$ solving the Dirichlet problem
\begin{flalign*}
u_{0}+W^{1,\varphi}_{0}(U,\RN)\ni w\mapsto \min\int_{U}\varphi(\snr{Dw}) \ dx.
\end{flalign*}
\end{definition}
\begin{proposition}\label{p0}
Let $\Omega \subset \mathbb{R}^{n}$ be open and $\varphi \in C^{2}(0,\infty)\cap C^{1}[0,\infty)$ be \oh{a Young function} satisfying \eqref{uniphi}. If $h\in W^{1,\varphi}(\Omega, \RN)$ is $\varphi$-harmonic on $\Omega$, then for any ball $B_{r}$ with $B_{2r}\Subset \Omega$ there holds
\begin{flalign*}
\sup_{B_{r}}\varphi(\snr{Dh})\le c\mint_{B_{2r}}\varphi(\snr{Dh}) \ dx,
\end{flalign*}
where $c$ depends only on $n, N, i_{\varphi}, s_{\varphi}$.
\end{proposition}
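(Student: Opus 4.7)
The plan is to combine a Caccioppoli-type estimate for $Dh$ with a Moser iteration scheme, all carried out in the Orlicz framework dictated by $\varphi$. Direct differentiation of the Euler--Lagrange system for $h$ is not a priori justified, so I would first regularize by $\varphi_{\tau}(t):=\varphi(t)+\tau t^{2}/2$, $\tau\in(0,1]$, and work with the corresponding minimizers $h_{\tau}\in h+W^{1,2}_{0}(U,\RN)$: these are uniformly elliptic, hence smooth, and by \eqref{uniphi} their Orlicz indices remain controlled independently of $\tau$. Minimality plus the pointwise inequality $\varphi\le\varphi_{\tau}$ yields $h_{\tau}\rightharpoonup h$ weakly in $W^{1,\varphi}$ and strongly in $L^{\varphi}$ as $\tau\downarrow 0$.

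For each $\tau$, I would differentiate the Euler--Lagrange equation in a coordinate direction $e_{k}$ and test against $\eta^{2}D_{k}h_{\tau}\cdot\varphi(\snr{Dh_{\tau}})^{\gamma}$ (with $\gamma\ge 0$ and $\eta$ a cut-off). Standard absorption, using \eqref{uniphi} to extract the extremal exponents $(i_{\varphi},s_{\varphi})$, delivers a Caccioppoli inequality
\begin{flalign*}
\int_{B_{\rho}}\snr{\dd V_{\varphi_{\tau}}(Dh_{\tau})}^{2}\,\varphi(\snr{Dh_{\tau}})^{\gamma}\,dx\le \frac{c}{(R-\rho)^{2}}\int_{B_{R}}\varphi(\snr{Dh_{\tau}})^{\gamma+1}\,dx,
\end{flalign*}
where $V_{\varphi_{\tau}}$ denotes the Uhlenbeck-type vector field satisfying $\snr{V_{\varphi_{\tau}}(z)}^{2}\sim\varphi_{\tau}(\snr{z})$ in analogy with \eqref{control}. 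Coupled with the Sobolev embedding $W^{1,2}\hookrightarrow L^{2^{*}}$ this gives a reverse-Hölder-type relation for $\varphi(\snr{Dh_{\tau}})$.

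Moser iteration on a nested sequence of balls shrinking from $B_{2r}$ to $B_{r}$ then yields
\begin{flalign*}
\sup_{B_{r}}\varphi(\snr{Dh_{\tau}})\le c\mint_{B_{2r}}\varphi(\snr{Dh_{\tau}})\,dx,
\end{flalign*}
with $c=c(n,N,i_{\varphi},s_{\varphi})$ independent of $\tau$. Lower semicontinuity of the left-hand side and strong $L^{\varphi}$-convergence of the right-hand side as $\tau\downarrow 0$ then transfer the estimate from $h_{\tau}$ to $h$.

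The main technical obstacle is keeping every iteration constant uniform in $\tau$ and depending on $\varphi$ only through the indices $(i_{\varphi},s_{\varphi})$. This forces each step (Caccioppoli, Sobolev embedding, Moser iteration, absorption of lower-order terms) to be executed with the shifted $N$-functions $(\varphi_{\tau})_{a}$ of Diening--Ettwein rather than with pure powers: it is precisely the uniform $\Delta_{2}$ property encoded by \eqref{uniphi} that prevents the telescoping constants from blowing up and that makes the whole chain of inequalities depend on $\varphi$ solely through $i_{\varphi}$ and $s_{\varphi}$, as required.
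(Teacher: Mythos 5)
The paper does not prove Proposition \ref{p0} at all: it is imported verbatim as ``Lemma 5.8 and Theorem 6.4 in \cite{diestrver}''. Your sketch is, in substance, a reconstruction of the argument that underlies that reference (Uhlenbeck-type regularization, differentiated Caccioppoli inequality with weights $\varphi^{\gamma}$, reverse H\"older via Sobolev, Moser iteration, passage to the limit), so it is not so much a \emph{different} route as a fleshed-out version of the cited one. The outline is sound and correctly identifies why the constants depend only on $n,N,i_{\varphi},s_{\varphi}$: the uniform indices in \eqref{uniphi} are inherited (up to taking $\min(i_{\varphi},1)$ and $\max(s_{\varphi},1)$) by the regularized functions, which is what keeps the Moser iteration constants $\tau$-independent.

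One point deserves more care than your sketch gives it. The additive regularization $\varphi_{\tau}(t)=\varphi(t)+\tau t^{2}/2$ presupposes $h\in W^{1,2}_{\loc}$, which is not automatic from $h\in W^{1,\varphi}$ when the lower growth of $\varphi$ is strictly below $2$ (in the paper's applications, when $p<2$): the quantity $\int\varphi_{\tau}(\snr{Dh})\,dx$ used to bound the $h_{\tau}$-energies may be infinite. This is routinely fixed either by choosing a regularization of the form $\varphi_{\tau}(t)=\varphi\bigl((t^{2}+\tau^{2})^{1/2}\bigr)-\varphi(\tau)$, which leaves $W^{1,\varphi}$ unchanged, or by first mollifying the boundary datum, proving the estimate for smooth data with the constant depending only on $(n,N,i_{\varphi},s_{\varphi})$, and then passing to the limit. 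With that repair, and carrying out the Caccioppoli step with the shifted $N$-functions as you indicate (your test function should carry $\varphi_{\tau}$ rather than $\varphi$ for consistency, although this is immaterial since the two are comparable uniformly in $\tau$ on bounded sets of gradients), the scheme does produce the stated sup-bound.
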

We conclude this section by giving the definition of a local minimizer of \eqref{hmvp}.
\begin{definition}
A map $u \in W^{1,H(\cdot)}_{\mathrm{loc}}(\Omega)$ is a local minimizer of the variational integral \eqref{hmvp} if and only if $H(\cdot,Du)\in L^{1}_{\mathrm{loc}}(\Omega)$ and the minimality condition $\mathcal{H}(u,\supp(u-v))\le \mathcal{H}(v,\supp(u-v))$ is satisfied whenever $v \in W^{1,1}_{\mathrm{loc}}(\Omega)$ and $\supp(u-v)\subset \Omega$.
\end{definition}
\section{First regularity results}\label{3}
In this section we collect a few basic regularity results which can be proved with minor adjustments to the proofs contained in \cite{bcm, double, colmin, ok1}.
\oh{\begin{lemma}[Sobolev-Poincar\'e inequality]\label{L1}
Let $1<p\cd{<} q\cd{<} s$ and $\alpha, \beta \in (0,1]$ verifying \eqref{mathAB}, \eqref{rat}. Then there exist a constant $c=c(n,p,q,s)$ and an exponent $d=d(n,p,q,s) \in (0,1)$ such that for any $w \in W^{1,H(\cdot)}(B_{r})$ with $r \le 1$,
\begin{flalign}\label{sp}
\mint_{B_{r}}H\left(x, \frac{w-(w)_r}{r} \right) dx \le c\left(1+[a]_{0,\alpha}\nr{Dw}^{q-p}_{L^{p}(B_{r})}+[b]_{0,\beta}\nr{Dw}^{s-p}_{L^{p}(B_{r})}\right)\left(\mint_{B_{r}}H(x,Dw)^{d} \, dx\right)^{\frac{1}{d}}.
\end{flalign}
Furthermore, the same is still true with $w-(w)_r$ replaced by $w$ if we consider $w\in W^{1,H(\cdot)}_{0}(B_{r})$.
\end{lemma}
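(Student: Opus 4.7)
The plan is to decompose the integrand as $H(x,v/r)=\snr{v/r}^{p}+a(x)\snr{v/r}^{q}+b(x)\snr{v/r}^{s}$, where $v:=w-(w)_{r}$, and to bound the three resulting pieces separately, recombining them through a common exponent $d=d(n,p,q,s)\in(0,1)$. A direct check using $\alpha,\beta\le 1$ and $p\ge 1$ shows that \eqref{rat} automatically forces $q,s\le p^{*}$ (the Sobolev conjugate of $p$, taken to be $+\infty$ when $p\ge n$); in particular $q_{*}:=nq/(n+q)\le p$ and $s_{*}:=ns/(n+s)\le p$, which will let Jensen transfer averaged $L^{q_{*}}$- and $L^{s_{*}}$-norms of $\snr{Dw}$ up to averaged $L^{p}$-norms. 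A convenient unifying exponent is $d:=ns/[p(n+s)]$, which is strictly less than $1$ and dominates every intermediate exponent appearing below; by the monotonicity $(\mint H^{d'})^{1/d'}\le(\mint H^{d})^{1/d}$ for $d'\le d$, each partial estimate can then be routed back to the common right-hand side.

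The $p$-piece is immediate: standard Sobolev-Poincar\'e gives $\mint\snr{v/r}^{p}\,dx\le c(\mint\snr{Dw}^{pd_{p}}\,dx)^{1/d_{p}}$ with $d_{p}:=n/(n+p)$, and $\snr{Dw}^{p}\le H(x,Dw)$ then yields $\le c(\mint H(x,Dw)^{d_{p}}\,dx)^{1/d_{p}}$. For the $q$-piece I use the H\"older-type splitting $a(x)\le\inf_{B_{r}}a+[a]_{0,\alpha}r^{\alpha}$ on $B_{r}$ to break it in two. The $(\inf_{B_{r}}a)$-part is handled by Sobolev-Poincar\'e at level $q$ followed by pulling the constant inside the $L^{d_{q}}$-average, with $d_{q}:=n/(n+q)$: using $\inf_{B_{r}}a\le a(x)$, one obtains $(\inf_{B_{r}}a)\mint\snr{v/r}^{q}\,dx\le c(\mint((\inf_{B_{r}}a)\snr{Dw}^{q})^{d_{q}}\,dx)^{1/d_{q}}\le c(\mint H(x,Dw)^{d_{q}}\,dx)^{1/d_{q}}$. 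The $[a]_{0,\alpha}r^{\alpha}$-part is the delicate one: starting from $\mint\snr{v/r}^{q}\,dx\le c(\mint\snr{Dw}^{q_{*}}\,dx)^{q/q_{*}}$, I split the outer power as $q/q_{*}=(q-p)/q_{*}+p/q_{*}$, apply Jensen (since $q_{*}\le p$) to dominate the first factor by $(\mint\snr{Dw}^{p}\,dx)^{(q-p)/p}$, and rewrite the second factor as $(\mint\snr{Dw}^{p\tilde d}\,dx)^{1/\tilde d}\le(\mint H(x,Dw)^{\tilde d}\,dx)^{1/\tilde d}$ with $\tilde d:=q_{*}/p$. Writing $(\mint\snr{Dw}^{p}\,dx)^{(q-p)/p}=cr^{-n(q-p)/p}\nr{Dw}_{L^{p}(B_{r})}^{q-p}$ and multiplying by $[a]_{0,\alpha}r^{\alpha}$ produces the factor $r^{\alpha-n(q-p)/p}$, which is $\le 1$ since \eqref{rat} forces the exponent to be non-negative and $r\le 1$. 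The $s$-piece is treated verbatim with $(b,s,\beta)$ in place of $(a,q,\alpha)$.

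Summing the three contributions and using $(\mint H^{d'})^{1/d'}\le(\mint H^{d})^{1/d}$ to collapse every intermediate exponent onto the single $d$ gives \eqref{sp}. The version with $w$ in place of $w-(w)_{r}$ on $W^{1,H(\cdot)}_{0}(B_{r})$ follows from the same argument with the zero-trace form of Sobolev-Poincar\'e at every occurrence. I expect the main technical delicacy to lie precisely in this choreography of exponents: each of the three terms must be pushed through a Sobolev-Poincar\'e inequality whose effective right-hand exponent is strictly below $p$, and the sharp conditions \eqref{rat} must enter at the one juncture where the positive H\"older powers $r^{\alpha}$ and $r^{\beta}$ exactly balance the negative factors $r^{-n(q-p)/p}$ and $r^{-n(s-p)/p}$ arising from the $L^{p}$-normalization of $\nr{Dw}_{L^{p}}$.
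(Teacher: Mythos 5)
Your argument is essentially sound but takes a genuinely different route from the paper. The paper proves \eqref{sp} by a four-way case analysis on whether $\sup_{B_r}a$ and $\sup_{B_r}b$ exceed the thresholds $4[a]_{0,\alpha}r^{\alpha}$ and $4[b]_{0,\beta}r^{\beta}$: in the ``degenerate'' directions it bounds the coefficient by $Cr^{\alpha}$ (resp.\ $Cr^{\beta}$) and uses classical Sobolev--Poincar\'e, while in the ``non-degenerate'' directions it shows $a(x)\approx a_0$ (resp.\ $b(x)\approx b_0$) and invokes a Sobolev--Poincar\'e inequality for the frozen Orlicz integrands $H_0^q$, $H_0^s$, $H_0$ with constants independent of $a_0,b_0$. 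You instead avoid any case splitting by the additive decomposition $a(x)\le\inf_{B_r}a+[a]_{0,\alpha}(2r)^{\alpha}$: the oscillation part is handled exactly as the paper's degenerate case, and the new ingredient is your treatment of the $\inf$-part, where you pull the constant inside the $L^{d_q}$-average via $(\inf a)(\mint Y)^{1/d_q}=(\mint((\inf a)^{d_q}Y))^{1/d_q}$ and recognize $(\inf a)|Dw|^q\le H(x,Dw)$. This makes the proof more elementary (no Orlicz Sobolev--Poincar\'e is needed) and more uniform, and it scales cleanly to the $m$-phase generalization mentioned at the end of the introduction.

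There is one technical gap to close. You set $q_*:=nq/(n+q)$ and $s_*:=ns/(n+s)$ without a lower cap, but the Sobolev--Poincar\'e step $\mint|v/r|^q\,dx\le c(\mint|Dw|^{q_*}\,dx)^{q/q_*}$ requires $q_*\ge 1$; when $q<n/(n-1)$ (entirely possible under \eqref{mathAB}--\eqref{rat}) this fails. The fix is to use $q_*:=\max\{nq/(n+q),1\}$, $s_*:=\max\{ns/(n+s),1\}$ (as the paper does); the $\inf$-piece trick still works with $d_q=q_*/q$, and the unifying exponent should correspondingly be $d:=s_*/p=\max\{ns/[p(n+s)],1/p\}$. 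Note that with your current $d=ns/[p(n+s)]$ and, say, $n=2$, $p=1.1$, $q=1.2$, $s=1.3$, one gets $d\approx 0.72<1/q\approx 0.83$, so the monotonicity of power means cannot route the $\inf a$-piece, which lives at exponent $1/q$, onto $(\mint H^d)^{1/d}$. Once the $\max$ cap is inserted, all five intermediate exponents $p_*/p$, $q_*/q$, $s_*/s$, $q_*/p$, $s_*/p$ are dominated by $s_*/p<1$, and the argument closes.
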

\begin{proof}
We first consider the case
\begin{equation}\label{poincare_case_0}
\sup_{x \in B_r} a(x) \le 4[a]_{0,\alpha}r^{\alpha} \quad \mbox{and} \quad \sup_{x \in B_r} b(x) \le 4[b]_{0,\beta}r^{\beta}.
\end{equation}
Then it follows from the classical Sobolev-Poincar\'e inequality that
\begin{flalign*}
\mint_{B_r} a(x) \frac{\snr{w-(w)_r}^q}{r^q} \, dx \leq 4[a]_{0,\alpha}r^{\alpha} \mint_{B_r} \frac{\snr{w-(w)_r}^q}{r^q} \, dx \leq c[a]_{0,\alpha}r^{\alpha} \left( \mint_{B_r} \snr{Dw}^{q_*} dx \right)^{\frac{q}{q_*}},
\end{flalign*}
\cd{with $c=c(n,q)$ }and
\begin{flalign*}
\mint_{B_r} b(x) \frac{\snr{w-(w)_r}^s}{r^s} \, dx \leq 4[b]_{0,\beta}r^{\beta} \mint_{B_r} \frac{\snr{w-(w)_r}^s}{r^s} \, dx \leq c[b]_{0,\beta}r^{\beta} \left( \mint_{B_r} \snr{Dw}^{s_*} dx \right)^{\frac{s}{s_*}},
\end{flalign*}
\cd{$c=c(n,s)$}, where
\begin{flalign*}
q_* := \max \left\lbrace \frac{nq}{n+q},1 \right\rbrace, \quad s_* := \max \left\lbrace \frac{ns}{n+s},1 \right\rbrace.
\end{flalign*}
We see from the assumption \eqref{rat} that $q_* < p$ and $s_* < p$.
Therefore, we obtain from H\"older's inequality, \eqref{rat} and the fact $r \leq 1$ that
\begin{flalign}\label{poincare_q}
\mint_{B_r} a(x) \frac{\snr{w-(w)_r}^q}{r^q} \, dx & \leq c[a]_{0,\alpha}r^{\alpha} \left( \mint_{B_r} \snr{Dw}^{p} dx \right)^{\frac{q-p}{p}} \left( \mint_{B_r} \snr{Dw}^{q_*} dx \right)^{\frac{p}{q_*}} \nonumber \\
& \leq c[a]_{0,\alpha}r^{\alpha-\frac{n(q-p)}{p}} \nr{Dw}_{L^{p}(B_{r})}^{q-p} \left( \mint_{B_r} \snr{Dw}^{q_*} dx \right)^{\frac{p}{q_*}} \nonumber \\
& \leq c[a]_{0,\alpha} \nr{Dw}_{L^{p}(B_{r})}^{q-p} \left( \mint_{B_r} \snr{Dw}^{q_*} dx \right)^{\frac{p}{q_*}},
\end{flalign}
\cd{$c=c(n,q)$} and
\begin{flalign}\label{poincare_s}
\mint_{B_r} b(x) \frac{\snr{w-(w)_r}^s}{r^s} \, dx & \leq c[b]_{0,\beta}r^{\beta} \left( \mint_{B_r} \snr{Dw}^{p} dx \right)^{\frac{s-p}{p}} \left( \mint_{B_r} \snr{Dw}^{s_*} dx \right)^{\frac{p}{s_*}} \nonumber \\
& \leq c[b]_{0,\beta}r^{\beta-\frac{n(s-p)}{p}} \nr{Dw}_{L^{p}(B_{r})}^{s-p} \left( \mint_{B_r} \snr{Dw}^{s_*} dx \right)^{\frac{p}{s_*}} \nonumber \\
& \leq c[b]_{0,\beta} \nr{Dw}_{L^{p}(B_{r})}^{s-p} \left( \mint_{B_r} \snr{Dw}^{s_*} dx \right)^{\frac{p}{s_*}},
\end{flalign}
\cd{with $c=c(n,s)$}. In addition, it is clear that
\begin{flalign*}
\mint_{B_r} \frac{\snr{w-(w)_r}^p}{r^p} \, dx \leq \cd{c}\left( \mint_{B_r} \snr{Dw}^{p_*} dx \right)^{\frac{p}{p_*}},
\end{flalign*}
where \cd{$c=c(n,p)$ and} $p_* := \max \left\lbrace \frac{np}{n+p},1 \right\rbrace$.
We remark from \eqref{hmvp} that $p_* \cd{<} q_* \cd{<} s_*$.
Combining these estimates, we get
\begin{flalign}\label{result_poincare_case_0}
\mint_{B_{r}}H\left(x, \frac{w-(w)_r}{r} \right) dx & \leq c\left(1+[a]_{0,\alpha}\nr{Dw}^{q-p}_{L^{p}(B_{r})}+[b]_{0,\beta}\nr{Dw}^{s-p}_{L^{p}(B_{r})}\right) \left( \mint_{B_r} \snr{Dw}^{s_*} dx \right)^{\frac{p}{s_*}} \nonumber \\
& \leq c\left(1+[a]_{0,\alpha}\nr{Dw}^{q-p}_{L^{p}(B_{r})}+[b]_{0,\beta}\nr{Dw}^{s-p}_{L^{p}(B_{r})}\right)\left(\mint_{B_{r}}H(x,Dw)^{d_0} \, dx\right)^{\frac{1}{d_0}},
\end{flalign}
where $d_0 := s_*/p \in (0,1)$ \cd{and $c=c(n,p,q,s)$}. \\
We now turn to the case
\begin{equation}\label{poincare_case_1}
\sup_{x \in B_r} a(x) > 4[a]_{0,\alpha}r^{\alpha} \quad \mbox{and} \quad \sup_{x \in B_r} b(x) \le 4[b]_{0,\beta}r^{\beta}.
\end{equation}
Then there exists a point $y_0 \in B_r$ such that $a_0 := a(y_0) > 4[a]_{0,\alpha}r^{\alpha}$.
This gives
\begin{flalign*}
\snr{a(x)-a_0} \leq [a]_{0,\alpha}(2r)^{\alpha} \leq 2[a]_{0,\alpha}r^{\alpha} < \frac{a_0}{2}, \quad \forall x \in B_r,
\end{flalign*}
and hence $a(x) \leq 2a_0$ and $a_0 \leq 2a(x)$.
Therefore, we have
\begin{flalign*}
\frac{1}{2} \left( |t|^p + a_0 |t|^q \right) \leq |t|^p + a(x)|t|^q \leq |t|^p + a_0 |t|^q, \quad \forall x \in B_r, \ t \in \mathbb{R}.
\end{flalign*}
This and \eqref{poincare_s} yield
\begin{flalign*}
\mint_{B_{r}}H\left(x, \frac{w-(w)_r}{r} \right) dx & \leq 2 \mint_{B_r} H_{0}^{q}\left(\frac{w-(w)_r}{r} \right) dx + c[b]_{0,\beta} \nr{Dw}_{L^{p}(B_{r})}^{s-p} \left( \mint_{B_r} \snr{Dw}^{s_*} dx \right)^{\frac{p}{s_*}},
\end{flalign*}
\cd{with $c=c(n,s)$}. Using Sobolev-Poincar\'e inequality for Young function $H_{0}^{q}$, we have
\begin{flalign}\label{result_poincare_case_1}
\mint_{B_{r}}H\left(x, \frac{w-(w)_r}{r} \right) dx & \leq c \left( \mint_{B_r} H_{0}^{q}(Dw)^{d_q} \, dx \right)^{\frac{1}{d_q}} + c[b]_{0,\beta} \nr{Dw}_{L^{p}(B_{r})}^{s-p} \left( \mint_{B_r} \snr{Dw}^{s_*} dx \right)^{\frac{p}{s_*}} \nonumber \\
& \leq c \left( \mint_{B_r} H(x,Dw)^{d_q} \, dx \right)^{\frac{1}{d_q}} + c[b]_{0,\beta} \nr{Dw}_{L^{p}(B_{r})}^{s-p} \left( \mint_{B_r} H(x,Dw)^{\frac{s_*}{p}} dx \right)^{\frac{p}{s_*}} \nonumber \\
& \leq c\left(1+[b]_{0,\beta}\nr{Dw}^{s-p}_{L^{p}(B_{r})}\right)\left(\mint_{B_{r}}H(x,Dw)^{d_1} \, dx\right)^{\frac{1}{d_1}},
\end{flalign}
where \cd{$c=c(n,p,q,s)$}, $d_q = d_q(n,p,q) \in (0,1)$ and $d_1 := \max \left\lbrace d_q, \frac{s_*}{p} \right\rbrace \in (0,1)$. \\
As in the case \eqref{poincare_case_1}, we can obtain the estimate
\begin{flalign}\label{result_poincare_case_2}
\mint_{B_{r}}H\left(x, \frac{w-(w)_r}{r} \right) dx \leq c\left(1+[a]_{0,\alpha}\nr{Dw}^{q-p}_{L^{p}(B_{r})}\right)\left(\mint_{B_{r}}H(x,Dw)^{d_2} \, dx\right)^{\frac{1}{d_2}},
\end{flalign}
for \cd{$c=c(n,p,q,s)$} and some $d_2\cd{=d_{2}(n,p,q,s)} \in (0,1)$, for the case
\begin{equation}\label{poincare_case_2}
\sup_{x \in B_r} a(x) \le 4[a]_{0,\alpha}r^{\alpha} \quad \mbox{and} \quad \sup_{x \in B_r} b(x) > 4[b]_{0,\beta}r^{\beta}.
\end{equation}
Finally, let us consider the case
\begin{equation}\label{poincare_case_3}
\sup_{x \in B_r} a(x) > 4[a]_{0,\alpha}r^{\alpha} \quad \mbox{and} \quad \sup_{x \in B_r} b(x) > 4[b]_{0,\beta}r^{\beta}.
\end{equation}
We see that there exist points $y_0, z_0 \in B_r$ such that $a_0 := a(y_0) > 4[a]_{0,\alpha}r^{\alpha}$ and $b_0 := b(z_0) > 4[b]_{0,\beta}r^{\beta}$.
This yields
\begin{flalign*}
\snr{a(x)-a_0} \leq [a]_{0,\alpha}(2r)^{\alpha} \leq 2[a]_{0,\alpha}r^{\alpha} < \frac{a_0}{2}, \quad \forall x \in B_r,
\end{flalign*}
and
\begin{flalign*}
\snr{b(x)-b_0} \leq [b]_{0,\beta}(2r)^{\beta} \leq 2[b]_{0,\beta}r^{\beta} < \frac{b_0}{2}, \quad \forall x \in B_r.
\end{flalign*}
It follows that $\frac{1}{2}a_0 \leq a(x) \leq 2a_0$ and $\frac{1}{2}b_0 \leq b(x) \leq 2b_0$, and hence
\begin{flalign*}
\frac{1}{2} H_{0}(t) \leq H(x,t) \leq 2H_{0}(t), \quad \forall x \in B_r, \ t \in \mathbb{R}.
\end{flalign*}
We now use Sobolev-Poincar\'e inequality for Young function $H_{0}$ to obtain
\begin{flalign}\label{result_poincare_case_3}
\mint_{B_{r}}H\left(x, \frac{w-(w)_r}{r} \right) dx & \leq 2 \mint_{B_{r}}H_{0}\left(\frac{w-(w)_r}{r} \right) dx \nonumber \\
& \leq c \left( \mint_{B_{r}}H_{0}(Dw)^{d_3} dx \right)^{\frac{1}{d_3}} \leq c \left( \mint_{B_{r}}H(x,Dw)^{d_3} dx \right)^{\frac{1}{d_3}}
\end{flalign}
for \cd{$c=c(n,p,q,s)$ and} some $d_3=d_3(n,p,q,s) \in (0,1)$. \\
Setting $d := \max \{ d_0, d_1, d_2, d_3 \} \in (0,1)$, we conclude from \eqref{result_poincare_case_0}, \eqref{result_poincare_case_1}, \eqref{result_poincare_case_2} and \eqref{result_poincare_case_3} that
\begin{flalign*}
\mint_{B_{r}}H\left(x, \frac{w-(w)_r}{r} \right) dx \le c\left(1+[a]_{0,\alpha}\nr{Dw}^{q-p}_{L^{p}(B_{r})}+[b]_{0,\beta}\nr{Dw}^{s-p}_{L^{p}(B_{r})}\right)\left(\mint_{B_{r}}H(x,Dw)^{d} \, dx\right)^{\frac{1}{d}},
\end{flalign*}
which completes the proof.
\end{proof}
}
\cd{\begin{remark}\label{poi}
\emph{An inequality of the type of \eqref{sp} holds for general Sobolev maps $w\in W^{1,H(\cdot)}$ such that $w\equiv 0$ on a set $A$ such that $\snr{A}\ge \gamma\snr{B_{r}}$. Precisely, we have that
\begin{flalign}\label{poi0}
\mint_{B_{r}}H\left(x,\frac{w}{r}\right) \, dx \le c\left(\mint_{B_{r}}H(x,Dw)^{d} \ dx\right)^{\frac{1}{d}},
\end{flalign}
where $d<1$ is the same as the one appearing in \eqref{sp} and $c=c(\gamma,n,p,q,s,[a]_{0,\alpha},[b]_{0,\beta},\alpha,\beta,\nr{Dw}_{L^{p}(B_{r})})$. 
}
\end{remark}}

\begin{lemma}[Caccioppoli Inequalities]\label{L2}
Let $u \in W^{1,H(\cdot)}_{\mathrm{loc}}(\Omega)$ be a local minimizer of \eqref{hmvp}, with \oh{$a(\cdot)$, $b(\cdot)$} and $p$, $q$, $s$ satisfy \eqref{mathAB} and \eqref{rat} respectively. Then there exists \oh{a constant $c=c(n,p,q,s)>0$} such that
\begin{flalign}\label{cacc}
\mint_{B_{\rho}}H(x,\oh{D}u) \ \oh{dx} \le c \mint_{B_{r}}H\left(x, \frac{u-\oh{(u)_r}}{r-\rho}\right) \ \oh{dx},
\end{flalign}
and for $\kappa \in \mathbb{R}$,
\begin{flalign}\label{cacck}
\int_{B_{\rho}}H(x,\oh{D}(u-\kappa)_{\pm}) \ \oh{dx} \le c\int_{B_{R}}H\left(x, \frac{(u-\kappa)_{\pm}}{R-\rho}\right) \ \oh{dx}.
\end{flalign}
\end{lemma}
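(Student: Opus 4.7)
The plan is to reduce both inequalities to standard energy comparisons against carefully chosen competitors, then extract the Caccioppoli bound through a hole-filling plus iteration argument, with one small twist needed to preserve the \emph{intrinsic} form of $H(x,\cdot/(r-\rho))$ on the right-hand side.

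For \eqref{cacc}, I would fix intermediate radii $\rho \le \tau_1 < \tau_2 \le r$ and a cut-off $\eta \in C_c^\infty(B_{\tau_2})$ with $\eta \equiv 1$ on $B_{\tau_1}$, $0\le \eta\le 1$ and $|D\eta|\le c/(\tau_2-\tau_1)$, and compare $u$ with $v := u-\eta(u-(u)_r)$. Since $\supp(u-v)\subset B_{\tau_2}\Subset \Omega$, $v$ is admissible for local minimality; moreover $Dv\equiv 0$ on $B_{\tau_1}$ and $Dv=(1-\eta)Du - D\eta\,(u-(u)_r)$ on the corona $B_{\tau_2}\setminus B_{\tau_1}$. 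Convexity and subadditivity of each of the three phases $|\cdot|^p$, $|\cdot|^q$, $|\cdot|^s$, together with $a,b\ge 0$, yield
\begin{equation*}
H(x, Dv)\le c\, H(x,Du)+c\, H\!\left(x,\tfrac{u-(u)_r}{\tau_2-\tau_1}\right)\quad\text{on } B_{\tau_2}\setminus B_{\tau_1},
\end{equation*}
so local minimality combined with $Dv\equiv 0$ on $B_{\tau_1}$ gives
\begin{equation*}
\int_{B_{\tau_2}}H(x,Du)\,dx\le c\int_{B_{\tau_2}\setminus B_{\tau_1}}H(x,Du)\,dx+c\int_{B_r}H\!\left(x,\tfrac{u-(u)_r}{\tau_2-\tau_1}\right)dx.
\end{equation*}
A standard hole-filling (add $c\int_{B_{\tau_1}}H(x,Du)\,dx$ to both sides and rearrange) produces
\begin{equation*}
\int_{B_{\tau_1}}H(x,Du)\,dx\le \theta\int_{B_{\tau_2}}H(x,Du)\,dx+c\int_{B_r}H\!\left(x,\tfrac{u-(u)_r}{\tau_2-\tau_1}\right)dx,
\end{equation*}
with $\theta=\theta(n,p,q,s)\in(0,1)$.

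The real subtlety is the final iteration. A blunt application of Lemma \ref{iter} with the worst exponent $\beta=s$ would only deliver $\int_{B_\rho}H(x,Du)\,dx\le c(r-\rho)^{-s}\int_{B_r}H(x,u-(u)_r)\,dx$, which destroys the correct scaling of the $p$- and $q$-phases. To recover \eqref{cacc} I would split the right-hand side into its three natural contributions $|u-(u)_r|^p/(\tau_2-\tau_1)^p$, $a(x)|u-(u)_r|^q/(\tau_2-\tau_1)^q$ and $b(x)|u-(u)_r|^s/(\tau_2-\tau_1)^s$ and apply Lemma \ref{iter} phase-by-phase; equivalently, iterate along radii $\tau_k=\rho+(r-\rho)(1-\lambda^{-k})$ with $\lambda>1$ chosen so small that $\theta\lambda^s<1$, sum the three resulting geometric series, and kill the tail $\theta^K\int_{B_{\tau_K}}H(x,Du)\to 0$ (using finiteness of the energy). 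This gives the non-averaged version of \eqref{cacc}, from which the stated averaged form is immediate. The main obstacle is exactly this interplay between the three inhomogeneous growths $p<q<s$: once the iteration is carried out term-by-term rather than globally, the argument closes.

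For \eqref{cacck} I would run the same scheme with the truncated competitor $v:=u\mp \eta(u-\kappa)_\pm$ in place of $u-\eta(u-(u)_r)$. All relevant quantities localize on the set $\{\pm(u-\kappa)>0\}$, where $D(u-\kappa)_\pm=\pm Du$ and $Dv=(1-\eta)Du\mp D\eta\,(u-\kappa)_\pm$, while $u\equiv v$ on its complement. The same subadditivity of $H$, hole-filling and phase-by-phase iteration then produce \eqref{cacck}, with $(u-\kappa)_\pm$ taking the role played before by $u-(u)_r$.
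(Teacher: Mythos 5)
Your proof reconstructs, correctly and in the same spirit, the standard Caccioppoli argument that the paper does not write out (it points to \cite{bcm,double,colmin,ok1} for ``minor adjustments''): cut-off competitor $v=u-\eta(u-(u)_r)$ supported on an annulus, the pointwise bound $H(x,Dv)\le c\,H(x,Du)+c\,H\bigl(x,\tfrac{u-(u)_r}{\tau_2-\tau_1}\bigr)$ coming from convexity of each phase $|\cdot|^p$, $|\cdot|^q$, $|\cdot|^s$ together with $a(\cdot),b(\cdot)\ge 0$, then minimality, hole-filling, and iteration. You have also correctly isolated the only delicate step: a blunt application of Lemma~\ref{iter} with the single exponent $\beta=s$ would place $(r-\rho)^{-s}$ in front of all three phase contributions and thereby lose the sharper $(r-\rho)^{-p}$ and $(r-\rho)^{-q}$ scalings on the $p$- and $q$-parts, scalings that Corollary~\ref{C0} genuinely needs. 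Iterating along $\tau_k=\rho+(r-\rho)(1-\lambda^{-k})$ with $\theta\lambda^s<1$ closes all three geometric series at once and yields \eqref{cacc} in its intrinsic form; the same computation with $v=u\mp\eta(u-\kappa)_\pm$, localized on $\{\pm(u-\kappa)>0\}$, yields \eqref{cacck}.

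One small imprecision to note: the passage from the non-averaged estimate $\int_{B_\rho}H(x,Du)\,dx\le c\int_{B_r}H\bigl(x,\tfrac{u-(u)_r}{r-\rho}\bigr)\,dx$ to the averaged form \eqref{cacc} is not literally ``immediate'', since dividing by $|B_\rho|$ and $|B_r|$ introduces the factor $(r/\rho)^n$. The printed constant $c=c(n,p,q,s)$ therefore tacitly presupposes $\rho$ comparable to $r$; this is exactly how the lemma is used in the paper (Corollary~\ref{C0} always applies it with the radius doubled, so the factor is just $2^n$), and it does not affect the substance of your argument.
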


A direct consequence of \eqref{cacc} is the following inner local higher integrability result of Gehring type.
\begin{lemma}[Gehring's Lemma]\label{L3}
There are $c=c(n,p,q,s,[a]_{0,\alpha},[b]_{0,\beta}, \nr{Du}_{L^{p}(B_{r})})>0$ and a positive integrability exponent $\delta_{g}=\delta_{g}(n,p,q,s,[a]_{0,\alpha},[b]_{0,\beta}, \nr{Du}_{L^{p}(B_{r})})$ such that if $u \in W^{1,p}_{\mathrm{loc}}(\Omega)$ is a local minimizer, then
\begin{flalign}\label{geh}
H(\cdot,\oh{D}u) \in L^{1+\delta_{g}}_{\mathrm{loc}}(\Omega) \quad \mathrm{and} \quad \left(\mint_{B_{\oh{r/2}}}H(x,\oh{D}u)^{1+\delta_{g}}\ \oh{dx} \right)^{\frac{1}{1+\delta_{g}}}\le c\mint_{B_{r}}H(x,\oh{D}u) \ \oh{dx}, \quad \oh{\forall B_r \subset \Omega.}
\end{flalign}
\end{lemma}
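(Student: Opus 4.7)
The plan is to combine the Caccioppoli inequality \eqref{cacc} with the Sobolev--Poincar\'e inequality \eqref{sp} to produce a reverse H\"older inequality with right-hand side exponent strictly less than $1$, and then to invoke the classical Giaquinta--Modica self-improving lemma.

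First, fix a reference ball $B_{R_{0}} \Subset \Omega$ with $R_{0} \le 1$ and let $B_r \subset B_{R_{0}}$ be any concentric subball. Specialize \eqref{cacc} to $\rho = r/2$ and exploit that, by the very structure \eqref{hm} of $H$ and the fact that $p<q\le s$, one has $H(x,2z) \le 2^{s}\,H(x,z)$ for every $x \in \Omega$ and $z \in \mathbb{R}^{n}$. This turns the Caccioppoli estimate into
\begin{flalign*}
\mint_{B_{r/2}} H(x, Du)\, dx \le c \mint_{B_r} H\!\left(x, \frac{u - (u)_r}{r}\right) dx,
\end{flalign*}
with $c = c(n,p,q,s)$. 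Applying \eqref{sp} to the right-hand side then yields
\begin{flalign*}
\mint_{B_{r/2}} H(x, Du)\, dx \le C \left(\mint_{B_r} H(x, Du)^{d}\, dx\right)^{1/d},
\end{flalign*}
with $d = d(n,p,q,s) \in (0,1)$ and $C = C(n,p,q,s,[a]_{0,\alpha},[b]_{0,\beta},\|Du\|_{L^{p}(B_{r})})$. This is precisely the reverse H\"older inequality that triggers the classical Giaquinta--Modica higher integrability lemma (see e.g. the arguments of \cite{bcm, double, colmin}), delivering an exponent $\delta_{g}>0$ and the bound \eqref{geh}.

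The main technical point is freezing the dependence of $C$ on $\|Du\|_{L^{p}(B_{r})}$ so that the reverse H\"older constant is uniform in $r$, as required by the abstract lemma. By monotonicity, $\|Du\|_{L^{p}(B_{r})} \le \|Du\|_{L^{p}(B_{R_{0}})}$ for every $B_{r} \subset B_{R_{0}}$, so $C$ can be replaced by a worst-case constant $\tilde C = \tilde C(n,p,q,s,[a]_{0,\alpha},[b]_{0,\beta},\|Du\|_{L^{p}(B_{R_{0}})})$ that is independent of $r$. With this replacement the abstract Gehring-type lemma applies on all subballs of $B_{R_{0}}$ and produces \eqref{geh} on $B_{R_{0}/2}$, from which the local statement follows by a standard covering argument. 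Beyond this bookkeeping issue, the proof is a direct transcription of the scheme used in the double-phase case, the multi-phase structure contributing only through the extra term $[b]_{0,\beta}\|Du\|^{s-p}_{L^{p}(B_{r})}$ already accommodated by Lemma \ref{L1}.
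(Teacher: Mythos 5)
Your proposal is correct and takes essentially the same route the paper relies on: the paper gives no explicit proof, dubbing the lemma ``a direct consequence of \eqref{cacc}'', and the standard argument is exactly the one you carried out --- apply \eqref{cacc} with $\rho=r/2$, use the doubling property $H(x,2z)\le 2^{s}H(x,z)$ (valid since $p<q\le s$), feed the result into the Sobolev--Poincar\'e inequality \eqref{sp}, and observe that the factor $1+[a]_{0,\alpha}\|Du\|^{q-p}_{L^{p}(B_{r})}+[b]_{0,\beta}\|Du\|^{s-p}_{L^{p}(B_{r})}$ is uniformly bounded over all subballs of a fixed reference ball by monotonicity of the $L^{p}$ norm, so that the Giaquinta--Modica lemma applies with a constant independent of $r$. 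One tiny wording issue: you say ``concentric subball'' in the setup, but the monotonicity observation and the application of the self-improving lemma require the reverse H\"older estimate over all subballs of $B_{R_{0}}$, not just concentric ones; your argument in fact works for all subballs exactly as written, so no mathematical content is missing.
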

After a standard covering argument, \cd{it follows from Lemma \ref{L3} that $u \in W^{1,p(1+\delta_{g})}_{\mathrm{loc}}(\Omega)$, so $u \in W^{1,p(1+\delta_{g})}(\Omega_{0})$} for $\Omega_{0}\Subset \Omega$. \cd{Moreover, by H\"older inequality, \eqref{geh} is true if $\delta_{g}$ is replaced by any $\sigma \in (0,\delta_{g})$.}

The next one is an up to the boundary higher integrability result for a solution of Dirichlet problems related to the multi-phase energy $H$. Clearly, when \oh{$a(\cdot) \equiv a_{0} = \const$} and \oh{$b(\cdot) \equiv b_{0} = \const$}, it extends to the auxiliary Young functions $H_{0}^{p}$, $H_{0}^{q}$, $H_{0}^{s}$ and $H_{0}$. \cd{In this case, $[a]_{0,\alpha}=[a_{0}]_{0,\alpha}=0$ and $[b]_{0,\beta}=[b_{0}]_{0,\beta}=0$, so constants and exponents do not depend either on $[a]_{0,\alpha}$, $[b]_{0,\beta}$ nor on $\nr{Dv}_{L^{p}(B_{r})}$}.

\begin{lemma}[Higher integrability up to the boundary]\label{L5}
Let $B_{r}\Subset \Omega_{0}\Subset \Omega$, $1<p\le q\le s$ and $v \in W^{1,H(\cdot)}_{u}(B_{r})$ be a solution to the Dirichlet problem
\begin{flalign}\label{dphm}
v \mapsto \min_{w\in W^{1,H(\cdot)}_{u}(B_{r})}\int_{B_{r}}H(x,Dw) \ dx,
\end{flalign}
and $\delta_{0}>0$ be such that $u \in \oh{W^{1,H(\cdot)^{1+\delta_{0}}}(B_{r})}$. Then there \oh{exists} $0<\sigma_{g}<\delta_{0}$, so that $v \in \oh{W^{1,H(\cdot)^{1+\sigma_{g}}}(B_{r})}$ and
\begin{flalign}\label{63}
\mint_{B_{r}} \oh{H(x,Dv)^{1+\sigma_{g}}} \ dx \le c\left\{\left(\mint_{B_{r}}H(x,Dv) \ dx\right)^{1+\sigma_{g}}+\mint_{B_{r}}\oh{H(x,Du)^{1+\sigma_{g}}} \ dx\right\},
\end{flalign}
where $c=c(n,p,q,s,[a]_{0,\alpha},[b]_{0,\beta}, \nr{H(\cdot,Du)}_{L^{1}(B_{r})})$ and $\oh{\sigma_{g}}=\oh{\sigma_{g}}(n,p,q,s,[a]_{0,\alpha},[b]_{0,\beta}, \nr{H(\cdot,Du)}_{L^{1}(B_{r})})$. 
\end{lemma}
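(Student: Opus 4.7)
\textbf{Proof proposal for Lemma \ref{L5}.} The plan is to derive a reverse H\"older inequality for $H(\cdot, Dv)$ that is uniform up to $\partial B_r$, and then feed it into a Gehring-type covering argument (as in \cite{giagiu1}). The key observation is that $w := v - u \in W^{1,H(\cdot)}_{0}(B_r)$, so its zero extension lies in $W^{1,H(\cdot)}(B_{2r})$ and vanishes identically on $B_{2r}\setminus B_r$.

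First I would fix $x_0 \in \overline{B_r}$, a radius $\rho$ with $B_{2\rho}(x_0) \subset B_{2r}$, and a standard cutoff $\eta \in C_c^\infty(B_\rho(x_0))$ with $\eta\equiv 1$ on $B_{\rho/2}(x_0)$ and $|D\eta|\le c/\rho$. Testing the minimality of $v$ against the competitor $v - \eta(v - u - m)$, where $m := (v-u)_{B_\rho(x_0)}$ in the interior case $B_\rho(x_0)\Subset B_r$ and $m := 0$ in the boundary case, and exploiting the convexity plus the $\Delta_2$ growth of $H(x,\cdot)$, one obtains in both cases the Caccioppoli-type estimate
\begin{equation*}
\mint_{B_{\rho/2}(x_0)} H(x, Dv)\,dx \le c\mint_{B_\rho(x_0)} H\!\left(x, \frac{v - u - m}{\rho}\right) dx + c\mint_{B_\rho(x_0)} H(x, Du)\,dx.
\end{equation*}

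Second, I would apply the Sobolev--Poincar\'e inequality of Lemma \ref{L1} in the interior case (with $w = v-u$ having mean $m$) and its variant from Remark \ref{poi} in the boundary case (using that $v-u \equiv 0$ on $B_\rho(x_0)\setminus B_r$, a set of measure $\gtrsim |B_\rho(x_0)|$ for $x_0\in\overline{B_r}$). Since $H(x, D(v-u))\le c\,H(x,Dv)+c\,H(x,Du)$, these combine with the previous inequality to yield the reverse H\"older estimate
\begin{equation*}
\mint_{B_{\rho/2}(x_0)} H(x, Dv)\,dx \le c\left(\mint_{B_\rho(x_0)} H(x, Dv)^{d}\,dx\right)^{\!\!1/d} + c\mint_{B_\rho(x_0)} H(x, Du)\,dx,
\end{equation*}
with $d\in(0,1)$ from Lemma \ref{L1} and $c$ depending on $n,p,q,s,[a]_{0,\alpha},[b]_{0,\beta}$ and $\|H(\cdot,Du)\|_{L^1(B_r)}$ (the latter via the a priori bound $\|H(\cdot,Dv)\|_{L^1(B_r)}\le \|H(\cdot,Du)\|_{L^1(B_r)}$ coming from minimality).

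Finally, I would invoke the up-to-the-boundary Gehring lemma (the version with a forcing term, e.g.\ Giaquinta--Modica) applied to $H(\cdot, Dv)$ on $B_r$, with forcing $H(\cdot, Du) \in L^{1+\delta_0}(B_r)$. This produces $\sigma_g\in(0,\delta_0)$ and the estimate \eqref{63}. The main obstacle I anticipate is technical rather than structural: carrying the phase-decomposition underlying Lemma \ref{L1} through the boundary argument requires verifying that the constant $c$ in the reverse H\"older inequality does not deteriorate as $\rho\to 0$ in any of the three regimes for $a(\cdot), b(\cdot)$ identified in the proof of Lemma \ref{L1} (i.e.\ the alternatives governed by $\sup a\lessgtr [a]_{0,\alpha}\rho^\alpha$ and $\sup b\lessgtr [b]_{0,\beta}\rho^\beta$), and ensuring that the exponent $d$ can be chosen uniformly. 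Once this uniformity is established, the Gehring step is standard.
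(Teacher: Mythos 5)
Your proposal follows essentially the same route as the paper's own proof: a Caccioppoli estimate obtained from minimality (via cutoff and hole-filling), split into the interior case (where Lemma~\ref{L1} is applied to $v-u-m$) and the boundary case (where the zero-extension of $v-u$ lets you use Remark~\ref{poi}), followed by a zero-extension of $H(\cdot,Dv)^{d}$ and $H(\cdot,Du)$ to a larger ball and a Gehring-type covering argument with forcing term. The concern you flag at the end --- that the exponent $d$ and the constants be uniform across the phase alternatives and as $\rho\to 0$ --- is precisely what the paper addresses by noting that $d$ in \eqref{sp} and \eqref{poi0} can be taken the same, and by using the minimality bound $\|H(\cdot,Dv)\|_{L^{1}(B_r)}\le\|H(\cdot,Du)\|_{L^{1}(B_r)}$ to remove the dependence on $\|Dv\|_{L^{p}(B_r)}$.
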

\cd{\begin{proof}
With $x_0\in B_r$, let us fix a ball $B_{\rho}(x_{0})\subset \mathbb{R}^{n}$. We start with the case in which it is $\snr{B_{\rho}(x_{0})\setminus B_{r}}>\frac{\snr{B_{\rho}(x_{0})}}{10}$. Let us fix $\rho/2<t<s<\rho$ and take a cut-off function 
$
\eta \in C^{1}_{c}(B_{s}(x_{0}))$ such that 
$\chi_{B_{t}(x_{0})}\le \eta \le \chi_{B_{s}(x_{0})}$ and $\snr{D\eta}\le 2/(s-t)$.
Since $\left.v\right|_{\partial B_{r}}=\left.u\right|_{\partial B_{r}}$ and $\left.\eta\right|_{\partial B_{s}(x_{0})}=0$, the function $v-\eta(v-u)$ coincides with $v$ on $\partial B_r$ and on $\partial B_{s}(x_{0})$ in the sense of traces and therefore, by the minimality of $v$ and the features of $\eta$ we obtain
\begin{flalign*}
\int_{B_{s}(x_{0})\cap B_{r}}H(x,Dv) \ dx \le c\left\{\int_{(B_{s}(x_{0})\setminus B_{t}(x_{0}))\cap B_{r}}H(x,Dv) \ dx +\int_{B_{s}(x_{0})\cap B_{r}}H(x,Du)+H\left(x,\frac{v-u}{r}\right) \ dx\right\},
\end{flalign*}
with $c=c(n,p,q,s)$.\\
By the classical hole-filling technique and Lemma \ref{iter}, we can conclude that
\begin{flalign}\label{130}
\int_{B_{\rho/2}(x_{0})\cap B_{r}}H(x,Dv) \ dx \le c\int_{B_{\rho}(x_{0})\cap B_{r}}H(x,Du)+H\left(x,\frac{v-u}{r}\right) \ dx,
\end{flalign}
for $c=c(n,p,q,s)$. Now extend $v-u$ as zero outside $B_{r}$ and recall that $\snr{B_{\rho}(x_{0})}\ge \snr{B_{\rho}(x_{0})\setminus B_{r}}> \frac{\snr{B_{\rho}(x_{0})}}{10}$. Poincar\'e's inequality \eqref{poi0} applies, thus getting
\begin{flalign}\label{131}
\mint_{B_{\rho}(x_{0})\cap B_{r}}H\left(x,\frac{v-u}{r}\right) \ dx \le c\left\{\left(\mint_{B_{\rho}\cap B_{r}}H(x,Dv)^{d} \ dx\right)^{\frac{1}{d}}+\mint_{B_{\rho}(x_{0})\cap B_{r}}H(x,Du) \ dx \right\},
\end{flalign}
with $c=c(n,p,q,s,[a]_{0,\alpha},[b]_{0,\beta},\alpha,\beta,\nr{H(\cdot,Du)}_{L^{1}(B_{r})})$. Here we dispensed $c$ from the dependence of $\nr{Dv}_{L^{p}(B_{r})}$ by using the minimality of $v$ and the fact that $\left.v\right |_{\partial B_{r}}=\left.u\right |_{\partial B_{r}}$. Merging \eqref{130} and \eqref{131} we obtain
\begin{flalign*}
\mint_{B_{\rho/2}(x_{0})\cap B_{r}}H(x,Dv) \ dx \le c\left\{\left(\mint_{B_{\rho}\cap B_{r}}H(x,Dv)^{d} \ dx\right)^{\frac{1}{d}}+\mint_{B_{\rho}(x_{0})\cap B_{r}}H(x,Du) \ dx \right\}.
\end{flalign*}
We next consider the situation when it is $B_\rho(x_{0}) \Subset B_r$, in which case the proof is analogous to the one for the interior case. As mentioned in Remark \ref{poi}, we can assume that the exponent $d<1$ from \rif{sp} and \rif{poi0} is the same. The two cases can be combined via a standard covering argument. In fact, let us define 
\begin{flalign*}
V(x)=\begin{cases}
\ H(x,Dv(x))^{d} \quad & \ \mathrm{in} \ B_{r}\\
\ 0 \quad & \ \mathrm{in} \ \mathbb{R}^{n}\setminus B_{r}
\end{cases}\quad and \quad U(x)=\begin{cases}
\ H(x,Du(x)) \quad & \ \mathrm{in} \ B_{r}\\
\ 0 \quad & \ \mathrm{in} \ \mathbb{R}^{n}\setminus B_{r}
\end{cases},
\end{flalign*}
we get
\begin{flalign*}
\mint_{B_{\rho/2}(x_{0})}V(x)^{\frac{1}{d}} \, dx\le c\left \{ \left(\mint_{B_{\rho}(x_{0})}V(x) \, dx\right)^{\frac{1}{d}}+\mint_{B_{\rho}(x_{0})}U(x) \, dx \right\},
\end{flalign*}
with $c=c=c(n,p,q,s,[a]_{0,\alpha},[b]_{0,\beta},\alpha,\beta,\nr{H(\cdot,Du)}_{L^{1}(B_{r})})$ and $0<d<1$. At this point the conclusion follows by a standard variant of Gehring's lemma. 
\end{proof}}

Furthermore, $u$ is locally bounded.
\begin{lemma}\label{L4}
Let $u\in W^{1,H_{M}(\cdot)}_{\mathrm{loc}}(\Omega)$ be a local minimizer of \eqref{hmvp}. Then $u$ is locally bounded \oh{in} $\Omega$ and for any $\Omega_{0}\Subset \Omega$ there is a \oh{positive} constant $c=c(\texttt{data}(\Omega_{0}))$ such that $\nr{u}_{L^{\infty}(\Omega_{0})}\le c$.
\end{lemma}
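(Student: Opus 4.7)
The scheme is a De Giorgi level-set iteration, adapted to the Orlicz--Musielak structure of $H(\cdot)$. Since the integrand in \eqref{hmvp} depends only on $|Dw|$, the map $-u$ is still a local minimizer, so it is enough to prove an upper bound for $u$ on a fixed ball $B_{R/2}(x_{0})$ with $B_{R}(x_{0})\Subset \Omega_{0}$ (with $R$ small); a covering argument then delivers the claim on $\Omega_{0}$. The three key ingredients are already available in the paper: the level-set Caccioppoli inequality \eqref{cacck}, the truncated Sobolev--Poincar\'e inequality \eqref{poi0} of Remark \ref{poi}, and the Gehring-type self-improvement of Lemma \ref{L3}.

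Fix a threshold $\kappa>0$ to be chosen large at the end, and define
\begin{flalign*}
\rho_i := \tfrac{R}{2}(1+2^{-i}), \qquad \kappa_i := \kappa(1-2^{-i}), \qquad w_i := (u-\kappa_i)_+, \qquad A_i := \{x\in B_{\rho_i} : w_i(x)>0\}.
\end{flalign*}
Applying \eqref{cacck} with radii $\rho_{i+1}<\rho_i$ and level $\kappa_{i+1}$, and using $\rho_i-\rho_{i+1}\sim R\,2^{-i}$, yields
\begin{flalign*}
\int_{B_{\rho_{i+1}}} H(x,Dw_{i+1})\,dx \;\le\; c\,\frac{4^{is}}{R^{s}}\int_{A_{i+1}\cap B_{\rho_i}} H(x,w_{i+1})\,dx.
\end{flalign*}
On $A_{i+1}$ one has the level-separation $w_i \ge \kappa_{i+1}-\kappa_i = \kappa\,2^{-(i+1)}$, so that Chebyshev's inequality gives $|A_{i+1}|\le c(\kappa\,2^{-(i+1)})^{-p}\int_{B_{\rho_i}} w_i^{p}\,dx$, and choosing $\kappa$ sufficiently large ensures $|A_{i+1}|\le \tfrac{1}{2}|B_{\rho_{i+1}}|$. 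Consequently $w_{i+1}$ vanishes on at least half of $B_{\rho_{i+1}}$, and the truncated Sobolev--Poincar\'e estimate \eqref{poi0} supplies
\begin{flalign*}
\mint_{B_{\rho_{i+1}}} H\!\left(x,\frac{w_{i+1}}{R}\right) dx \;\le\; c\left(\mint_{B_{\rho_{i+1}}} H(x,Dw_{i+1})^{d}\,dx\right)^{\!1/d},
\end{flalign*}
for some $d=d(n,p,q,s)\in (0,1)$ independent of $i$.

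Chaining the two displays, improving the exponent on the right via Gehring's Lemma \ref{L3}, and folding in the Chebyshev measure estimate above, one arrives at a super-linear recursion
\begin{flalign*}
Y_{i+1} \;\le\; c\,b^{\,i}\,Y_i^{\,1+\mu}, \qquad Y_i := \mint_{B_{\rho_i}} H\!\left(x,\frac{w_i}{R}\right) dx,
\end{flalign*}
with $b>1$ and $\mu>0$ depending only on $\texttt{data}(\Omega_{0})$. The standard fast-geometric-convergence lemma then forces $Y_i\to 0$, i.e.\ $u\le \kappa$ on $B_{R/2}(x_{0})$, whenever $Y_0$ lies below a threshold $Y_\star(b,\mu)$; this threshold is realised by choosing $\kappa=\kappa(\texttt{data}(\Omega_{0}),\|H(\cdot,Du)\|_{L^{1}(B_R)})$ large enough. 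Combining with the symmetric lower bound and a finite covering of $\Omega_{0}$ yields the claim.

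The main obstacle is the production of the super-linear exponent $1+\mu$: it requires the Poincar\'e exponent $d$ in \eqref{poi0} to be strictly less than $1$ \emph{jointly across} the three phases $|z|^{p}$, $a(x)|z|^{q}$ and $b(x)|z|^{s}$. This is precisely where assumption \eqref{rat} enters, by making each of $q_{*}<p$ and $s_{*}<p$ in the proof of Lemma \ref{L1} and keeping the resulting exponent $d$ uniform. Once the balance \eqref{rat} is in force, the Gehring improvement is available regardless of which of the phases is degenerate at a given point, and the iteration closes uniformly over all three phases.
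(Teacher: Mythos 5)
Your plan is correct and reproduces, in some detail, the first of the two routes the paper sketches. The paper's proof of Lemma~\ref{L4} is essentially a citation: it says the bound ``can be obtained as in \cite{double}, Section~$10$ as a consequence of \eqref{cacck}'' --- i.e.\ exactly the De~Giorgi level-set iteration you set up --- \emph{or} by checking the structural hypotheses (A0), (A1), (AInc), (ADec) of the general theorem of Harjulehto--H\"ast\"o--Toivanen \cite[Theorem~1.3]{hasto}; the paper then carries out the second, more abstract route, writing down admissible constants $\delta$ and $\gamma$ explicitly. Your route is the concrete one the paper delegates to \cite{double}, and it buys a self-contained iteration at the price of keeping careful track of the phase-dependent Poincar\'e exponent.

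Two small points deserve correction. First, the super-linear exponent $1+\mu$ does \emph{not} come from Gehring's Lemma~\ref{L3}: Gehring applies to the minimizer $u$, not to the truncations $w_{i}$, and is not needed here. The gain comes entirely from the Poincar\'e exponent $d<1$ in \eqref{poi0} combined with H\"older's inequality over the support of $H(\cdot,Dw_{i+1})$: since $H(x,Dw_{i+1})$ vanishes outside $A_{i+1}$, one has
\begin{flalign*}
\left(\mint_{B_{\rho_{i+1}}}H(x,Dw_{i+1})^{d}\,dx\right)^{1/d}\le c\left(\frac{\snr{A_{i+1}}}{\snr{B_{\rho_{i+1}}}}\right)^{(1-d)/d}\mint_{B_{\rho_{i+1}}}H(x,Dw_{i+1})\,dx,
\end{flalign*}
and the factor $(\snr{A_{i+1}}/\snr{B_{\rho_{i+1}}})^{(1-d)/d}$, estimated by Chebyshev in terms of $Y_{i}$, is what produces $\mu=(1-d)/d>0$. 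Second, the measure-density hypothesis of \eqref{poi0} must be verified at every step $i$, not just $i=0$: the Chebyshev bound $\snr{A_{i+1}}\le c(\kappa 2^{-(i+1)})^{-p}\int_{B_{\rho_{i}}}w_{i}^{p}\,dx$ has a factor blowing up in $i$, so for $i\ge 1$ the smallness of $\snr{A_{i+1}}$ must be fed back from the induction via the fast-geometric-convergence lemma (once $Y_{i}$ decays super-exponentially, the density bound holds automatically). With these adjustments your argument closes and matches the cited double-phase scheme.
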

\begin{proof}
This can be obtained as in \cite{double}, Section $10$ as a consequence of \eqref{cacck} or by noticing that the generalized Young function $H(x,t)=t^{p}+a(x)t^{q}+b(x)t^{s}$ \oh{under the assumptions \eqref{mathAB} and \eqref{rat}} satisfies hypotheses (A0), (A1), (AInc) and (ADec) of Theorem $1.3$ in \cite{hasto}. In fact, with the notation used in \cite{hasto}, it is easy to see that $H^{+}(\delta)\le 1\le H^{-}(1)$ for $\delta=\left[\frac{1}{3}\left(1+\max\left\{\nr{a}_{\infty},\nr{b}_{\infty}\right\}\right)^{-1}\right]^{\frac{1}{p}}\in (0,1)$. (A1) is true by choosing $\gamma=\frac{1}{2}\min\left\{ \left(\frac{1}{3}\right)^{\frac{1}{p}},\left(\frac{\omega_{n}^{\frac{1}{p}}}{3[a]_{0,\alpha}\diam(\Omega)^{\alpha-n\frac{q-p}{p}}}\right)^{\frac{1}{q-p}},\left(\frac{\omega_{n}^{\frac{1}{p}}}{3[b]_{0,\beta}\diam(\Omega)^{\beta-n\frac{s-p}{p}}}\right)^{\frac{1}{s-p}} \right\} \oh{\in (0,1)}$\oh{, where $\omega_{n}$ is the volume of the unit ball $B_1 \subset \mathbb{R}^n$}. (AInc) clearly holds with $\gamma^{-}=p>1$ and (ADec) is verified by $\gamma^{+}=s\ge p>1$.
\end{proof}

\section{Different alternatives}
For later uses, we also define the quantities
\begin{flalign}\label{infab}
a_{i}(B_{r})=\inf_{x \in B_{r}}a(x) \quad \mathrm{and}\quad b_{i}(B_{r})=\inf_{x\in B_{r}}b(x),
\end{flalign}
which will play an important role along the proof. In fact, when dealing with those so called non uniformly elliptic problems, the question of the degeneracy of the coefficients is crucial. Precisely we will look at four different scenarios:
\begin{flalign*}
\begin{cases}
\ \texttt{deg}(B_{r})\colon \ \  a_{i}(B_{r})\le 4[a]_{0,\alpha}r^{\alpha-\gamma_{a}} \quad &\mathrm{and} \quad b_{i}(B_{r})\le 4[b]_{0,\beta}r^{\beta-\gamma_{\beta}}\\
\ \texttt{deg}_{\alpha}(B_{r})\colon \ \  a_{i}(B_{r})\le 4[a]_{0,\alpha}r^{\alpha-\gamma_{a}} \quad &\mathrm{and} \quad b_{i}(B_{r})> 4[b]_{0,\beta}r^{\beta-\gamma_{b}}\\
\ \texttt{deg}_{\beta}(B_{r})\colon \ \  a_{i}(B_{r})> 4[a]_{0,\alpha}r^{\alpha-\gamma_{a}} \quad &\mathrm{and} \quad b_{i}(B_{r})\le 4[b]_{0,\beta}r^{\beta-\gamma_{b}}\\
\ \texttt{ndeg}(B_{r})\colon \ \  a_{i}(B_{r})\ge 4[a]_{0,\alpha}r^{\alpha-\gamma_{a}} \quad &\mathrm{and} \quad b_{i}(B_{r}) >4[b]_{0,\beta}r^{\beta-\gamma_{b}}\,,
\end{cases}
\end{flalign*}
where
\oh{
\begin{flalign}\label{ga}
\gamma_{a}=\begin{cases}
\ 0 \quad & \mbox{if} \quad n \ge p(1+\delta_{g})\\
\ \alpha-\frac{n(q-p)}{p}+\frac{n\delta_{g}(q-p)}{2p(1+\delta_{g})} \quad & \mbox{if} \quad n<p(1+\delta_{g})
\end{cases}
\end{flalign}
and
\begin{flalign}\label{gb}
\gamma_{b}=\begin{cases}
\ 0 \quad & \mbox{if} \quad n \ge p(1+\delta_{g})\\
\ \beta-\frac{n(s-p)}{p}+\frac{n\delta_{g}(s-p)}{2p(1+\delta_{g})} \quad & \mbox{if} \quad n<p(1+\delta_{g})
\end{cases},
\end{flalign}}
where $\delta_{g}$ is the higher integrability exponent given by Gehring Lemma which can be found in Section \ref{3}.\\ 
The above four cases, suitably combined, will render the desired regularity. To shorten the notation, we shall summarize the dependencies from the characteristics of the integrand we are dealing with, as
\oh{
\begin{flalign*}
\texttt{data}(\Omega_{0}) \equiv \begin{cases} \left( n,p,q,s,[a]_{0,\alpha},[b]_{0,\beta},\alpha, \beta, \nr{u}_{L^{\infty}(\Omega_{0})},  \nr{H(\cdot,Du)}_{L^{1+\delta_{g}}(\Omega_{0})} \right) \quad & \mbox{if} \quad n \ge p(1+\delta_{g})\\
\left( n,p,q,s,[a]_{0,\alpha},[b]_{0,\beta},\alpha, \beta, [u]_{C^{0,\lambda_{g}}(\Omega_{0})},  \nr{H(\cdot,Du)}_{L^{1+\delta_{g}}(\Omega_{0})} \right) \quad & \mbox{if} \quad n<p(1+\delta_{g})
\end{cases},
\end{flalign*}
and 
\begin{flalign*}
\texttt{data} \equiv \left( n,p,q,s,\nr{a}_{L^{\infty}(\Omega)}, \nr{b}_{L^{\infty}(\Omega)}, [a]_{0,\alpha},[b]_{0,\beta} \right).
\end{flalign*}}
\oh{Here, $\lambda_{g}=1-\frac{n}{p(1+\delta_{g})}$ is the H\"older continuity exponent coming from Sobolev-Morrey's embedding theorem when $n<p(1+\delta_{g})$ and} $\Omega_{0}\Subset \Omega$ is any open set compactly contained in $\Omega$. This will be helpful, since all the existing results we are going to use are of local nature.\\

Exploiting the different phases $(\texttt{deg})$-$(\texttt{ndeg})$ we obtain various forms of the previous Caccioppoli's inequality. We collect them in the next Corollary. Moreover, the constants $a_{0}$ and $b_{0}$ appearing in the definition of the auxiliary Young functions $H_{0}^{p}$, $H^{q}_{0}$, $H^{s}_{0}$ and $H_{0}$ will take the values $a_{0}=a_{i}(B_{2r})$ and $b_{0}=b_{i}(B_{2r})$.

\begin{corollary}\label{C0}
Let $u \in W^{1,H(\cdot)}_{\mathrm{loc}}(\Omega)$ be a local minimizer of \eqref{hmvp} and $B_{r}$, $r \in (0,1)$ be any ball such that $B_{2r}\Subset \Omega_{0}\Subset \Omega$. Then the following is verified:\\
\begin{flalign}
\texttt{deg}(B_{2r})\Rightarrow&  \mint_{B_{r}}H(x,Du) \ dx \le c_{1}\mint_{B_{2r}}H_{0}^{p}\left(\frac{u-(u)_{2r}}{2r}\right) \ dx, \label{degc}\\
\texttt{deg}_{\alpha}(B_{2r})\Rightarrow& \mint_{B_{r}}H(x,Du) \ dx \le c_{2} \mint_{B_{2r}}H_{0}^{s}\left(\frac{u-(u)_{2r}}{2r}\right) \ dx,\label{degac}\\
\texttt{deg}_{\beta}(B_{2r})\Rightarrow& \mint_{B_{r}}H(x,Du) \ dx \le c_{3}\mint_{B_{2r}}H_{0}^{q}\left(\frac{u-(u)_{2r}}{2r}\right) \ dx, \label{degbc}\\
\texttt{ndeg}(B_{2r})\Rightarrow& \mint_{B_{r}}H(x,Du) \ dx \le c_{4}\mint_{B_{2r}}H_{0}\left(\frac{u-(u)_{2r}}{2r}\right) \ dx,\label{ndegc}.
\end{flalign}
Here, if $n\ge (1+\delta_{g})p$,  $c_{1}=c_{1}(n,p,q,s,[a]_{0,\alpha},[b]_{0,\beta},\alpha,\beta,\nr{u}_{L^{\infty}(\Omega_{0})})$, $c_{2}=c_{2}(n,p,s,q,[a]_{0,\alpha},\alpha, \beta, \nr{u}_{L^{\infty}(\Omega_{0})})$,\\ $c_{3}=c_{3}(n,p,q,s,[b]_{0,\beta},\alpha, \beta, \nr{u}_{L^{\infty}(\Omega_{0})})$ and $c_{4}=c_{4}(n,p,q,s,[a]_{0,\alpha},[b]_{0,\beta},\alpha,\beta)$, while, if $n<p(1+\delta_{g})$, $c_{1}=c_{1}(n,p,q,s,[a]_{0,\alpha},[b]_{0,\beta},\alpha,\beta,[u]_{C^{0,\lambda_{g}}(\Omega_{0})})$, $c_{2}=c_{2}(n,p,s,q,[a]_{0,\alpha},\alpha, \beta, [u]_{C^{0,\lambda_{g}}(\Omega_{0})})$,\\ $c_{3}=c_{3}(n,p,q,s,[b]_{0,\beta},\alpha, \beta, [u]_{C^{0,\lambda_{g}}(\Omega_{0})})$ and $c_{4}=c_{4}(n,p,q,s,[a]_{0,\alpha},[b]_{0,\beta},\alpha,\beta,[u]_{C^{0,\lambda_{g}}(\Omega_{0})})$.

\end{corollary}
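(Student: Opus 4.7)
The plan is to derive each of the four inequalities by combining the Caccioppoli estimate \rif{cacc} from Lemma \ref{L2} with a pointwise bound between the density $H(x,\cdot)$ and the relevant auxiliary Young function from \rif{yfs}. Applying Lemma \ref{L2} with radii $r$ and $2r$ produces
$$
\mint_{B_{r}} H(x,Du)\,dx \le c\,\mint_{B_{2r}} H\!\left(x, \frac{u-(u)_{2r}}{r}\right) dx,
$$
so, in each of the four scenarios, the task reduces to showing that, for $x\in B_{2r}$ and $z=(u(x)-(u)_{2r})/r$, the density $H(x,z)$ is controlled by the corresponding $H_{0}^{p}(z)$, $H_{0}^{q}(z)$, $H_{0}^{s}(z)$, or $H_{0}(z)$, where $a_{0}=a_{i}(B_{2r})$ and $b_{0}=b_{i}(B_{2r})$.

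When a coefficient is in its non-degenerate regime, say $a_{i}(B_{2r}) > 4[a]_{0,\alpha} r^{\alpha-\gamma_{a}}$, H\"older continuity on $B_{2r}$ yields $|a(x)-a_{0}|\le [a]_{0,\alpha}(4r)^{\alpha}\le 4^{\alpha}[a]_{0,\alpha} r^{\alpha-\gamma_{a}} < a_{0}$ (using $\gamma_{a}\ge 0$, $r\le 1$, and $\alpha\le 1$), so $a(x)\le 2a_{0}$ throughout $B_{2r}$, and the $a$-contribution to $H(x,z)$ is immediately dominated by $2a_{0}|z|^{q}$; the analogous bound holds for $b$. When instead the coefficient is degenerate, $a_{i}(B_{2r})\le 4[a]_{0,\alpha} r^{\alpha-\gamma_{a}}$, the same H\"older estimate delivers $a(x)\le c[a]_{0,\alpha} r^{\alpha-\gamma_{a}}$ on $B_{2r}$, and the corresponding term must be absorbed into $|z|^{p}$.

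The core step is precisely this absorption. Writing $|z|^{q}=|z|^{p}|z|^{q-p}$, one estimates $|z|^{q-p}$ uniformly on $B_{2r}$: when $n\ge p(1+\delta_{g})$, Lemma \ref{L4} provides $|u-(u)_{2r}|\le 2\|u\|_{L^{\infty}(\Omega_{0})}$ and hence $|z|^{q-p}\le c\,r^{-(q-p)}$, whereas when $n<p(1+\delta_{g})$, Sobolev--Morrey embedding applied to the higher integrability $Du\in L^{p(1+\delta_{g})}$ from Lemma \ref{L3} gives $u\in C^{0,\lambda_{g}}(\Omega_{0})$, so $|u-(u)_{2r}|\le c[u]_{C^{0,\lambda_{g}}(\Omega_{0})} r^{\lambda_{g}}$ and $|z|^{q-p}\le c\,r^{-(1-\lambda_{g})(q-p)}$. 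Multiplying the degenerate bound on $a(x)$ by $|z|^{q-p}$ leaves a residual factor $r^{\alpha-(q-p)}$ in the first case and $r^{\alpha-\gamma_{a}-(1-\lambda_{g})(q-p)}$ in the second. A direct computation using \rif{rat} together with $n>p$ shows $\alpha-(q-p)\ge \alpha(1-p/n)\ge 0$, while inserting $1-\lambda_{g}=n/[p(1+\delta_{g})]$ and the definition \rif{ga} of $\gamma_{a}$ collapses the second exponent to $n(q-p)\delta_{g}/[2p(1+\delta_{g})]\ge 0$. Since $r\le 1$, either residual factor is harmless, and one obtains $a(x)|z|^{q}\le c|z|^{p}$ with $c$ depending on $[a]_{0,\alpha}$ and either $\|u\|_{L^{\infty}(\Omega_{0})}$ or $[u]_{C^{0,\lambda_{g}}(\Omega_{0})}$. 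The identical argument, now built on \rif{gb} and the gap $s-p$, handles the $b$-term.

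Assembling the four cases: \rif{degc} corresponds to both coefficients being degenerate, so both higher-order terms are absorbed into $|z|^{p}=H_{0}^{p}(z)$; \rif{ndegc} to both being non-degenerate, giving $H(x,z)\le cH_{0}(z)$; and the mixed cases \rif{degac}, \rif{degbc} follow by performing the absorption on precisely one side and using the non-degenerate comparison on the other, leaving $H_{0}^{s}(z)$ or $H_{0}^{q}(z)$ on the right. The precise dependencies of the constants $c_{1},\dots,c_{4}$ then follow by inspection, matching those stated in the corollary. The only delicate point in the whole scheme is the sharp matching of exponents between the degeneracy thresholds $\gamma_{a},\gamma_{b}$, the Sobolev--Morrey exponent $\lambda_{g}$, and the growth gaps $q-p$, $s-p$: the definitions \rif{ga}--\rif{gb} are engineered precisely so that the residual powers of $r$ produced by the absorption are non-negative, and it is exactly this matching that breaks if one tries to relax \rif{rat}.
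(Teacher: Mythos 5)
Your argument is correct and follows the paper's proof essentially verbatim: Caccioppoli \eqref{cacc} reduces matters to a pointwise comparison, then in each regime the degenerate coefficient term $a(x)|z|^{q}$ (resp.\ $b(x)|z|^{s}$) is absorbed into $|z|^{p}$ by writing $|z|^{q}=|z|^{p}|z|^{q-p}$ and bounding $|z|^{q-p}$ via Lemma~\ref{L4} or the Sobolev--Morrey embedding, with the sign of the residual power of $r$ secured by \eqref{rat}, \eqref{ga}, \eqref{gb} exactly as in the paper's inequalities \eqref{gam}--\eqref{gam2}, while non-degenerate terms are simply dominated by $2a_{0}|z|^{q}$ or $2b_{0}|z|^{s}$. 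The only difference is organizational --- you state the pointwise absorption once and then plug into Caccioppoli, whereas the paper carries out the same computation inside the averaged integral separately for each of the four cases.
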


\begin{proof}
First, notice that, by \eqref{rat}, $\gamma_{a}\ge0$ and $\gamma_{b}\ge 0$. Moreover, if $n\ge p(1+\delta_{g})$ we see that
\oh{
\begin{flalign}\label{gam}
&\alpha-\gamma_{a}+p-q \ge \frac{n(q-p)}{p}-(q-p) \ge \delta_{g}(q-p) > 0,\\
&\beta-\gamma_{b}+p-s \ge \frac{n(s-p)}{p}-(s-p) \ge \delta_{g}(s-p) > 0,\label{gbm}
\end{flalign}
while, if $n< p(1+\delta_{g})$,
\begin{flalign}\label{gam1}
&\alpha-\gamma_{a}+(\lambda_{g}-1)(q-p) = \frac{n\delta_{g}(q-p)}{2p(1+\delta_{g})} > 0 ,\\
&\beta-\gamma_{b}+(\lambda_{g}-1)(s-p)=\frac{n\delta_{g}(s-p)}{2p(1+\delta_{g})}>0.\label{gam2}
\end{flalign}}
Assume $\texttt{deg}(B_{2r})$.
\oh{We observe that for any $x \in B_{2r}$,
\begin{flalign*}
a(x) &= \left(a(x)-a_{i}(B_{2r})\right)+a_{i}(B_{2r}) \\
&\le [a]_{0,\alpha}(4r)^{\alpha} + 4[a]_{0,\alpha}r^{\alpha-\gamma_{a}} \le 8[a]_{0,\alpha}r^{\alpha-\gamma_{a}}
\end{flalign*}
since $\gamma_{a} \ge 0$ and $r \in (0,1)$.
Similarly we have $b(x) \le 8[b]_{0,\beta}r^{\beta-\gamma_{b}}, \forall x \in B_{2r}$.
}
If $n\ge p(1+\delta_{g})$, from \eqref{cacc}, Lemma \ref{L4}, \eqref{gam} and \eqref{gbm} we get,
\begin{flalign*}
\mint_{B_{r}}H(x,Du) \ dx \le& c\mint_{B_{2r}}H\left(x,\frac{u-(u)_{2r}}{\oh{r}}\right) \ dx\\
\le &c\mint_{B_{2r}}\left(1+\oh{8}[a]_{0,\alpha}r^{\alpha-\gamma_{a}+p-q}\nr{u}_{L^{\infty}(\Omega_{0})}^{q-p}+\oh{8}[b]_{0,\beta}r^{\beta-\gamma_{b}+p-s}\nr{u}_{L^{\infty}(\Omega_{0})}^{s-p}\right)\left |\frac{u-(u)_{2r}}{2r} \right |^{p} \ dx\\
\le & c_{1}\mint_{B_{2r}}H_{0}^{p}\left(\frac{u-(u)_{2r}}{2r}\right) \ dx,
\end{flalign*}
where $c_{1}=c_{1}(n,p,q,s,[a]_{0,\alpha},[b]_{0,\beta},\alpha,\beta,\nr{u}_{L^{\infty}(\Omega_{0})})$. On the other hand, if $n<p(1+\delta_{g})$ proceding as before but using Sobolev-Morrey's theorem and \eqref{gam1}, \eqref{gam2} instead of \eqref{gam}, \eqref{gbm}, we obtain
\begin{flalign*}
\mint_{B_{r}}H(x,Du) \ dx \le& c\mint_{B_{2r}}H\left(x,\frac{u-(u)_{2r}}{\oh{r}}\right) \ dx\\
\le &c\mint_{B_{2r}}\left(1+\oh{8}[a]_{0,\alpha}r^{\alpha-\gamma_{a}+(\lambda_{g}-1)(q-p)}[u]_{C^{0,\lambda_{g}}(\Omega_{0})}^{q-p}\right.\\
& \qquad \quad +\left.\oh{8}[b]_{0,\beta}r^{\beta-\gamma_{b}+(\lambda_{g}-1)(s-p)}[u]_{C^{0,\lambda_{g}}(\Omega_{0})}^{s-p}\right)\left |\frac{u-(u)_{2r}}{2r} \right |^{p} \ dx\\
\le & c_{1}\mint_{B_{2r}}H_{0}^{p}\left(\frac{u-(u)_{2r}}{2r}\right) \ dx,
\end{flalign*}
where $c_{1}=c_{1}(n,p,q,s,[a]_{0,\alpha},[b]_{0,\beta},\alpha,\beta,[u]_{C^{0,\lambda_{g}}(\Omega_{0})})$.\\
Now suppose $\texttt{deg}_{\alpha}(B_{2r})$. If $n\ge p(1+\delta_{g})$, \oh{we see from \eqref{cacc}, \eqref{gam}, \eqref{gbm} and Lemma \ref{L4} that}
\begin{flalign*}
\mint_{B_{r}}H(x,Du) \ dx \le& c\mint_{B_{2r}}H\left(x,\frac{u-(u)_{2r}}{r}\right) \ dx\\
\le &c \mint_{B_{2r}}\left(1+\oh{8}[a]_{0,\alpha}r^{\alpha-\gamma_{a}+p-q}\nr{u}_{L^{\infty}(\Omega_{0})}^{q-p}\right)\left |\frac{u-(u)_{2r}}{2r} \right |^{p} \ dx\\
&+c\mint_{B_{2r}}\left(b(x)-b_{i}(B_{2r})\right)\left |\frac{u-(u)_{2r}}{2r} \right|^{s}+b_{i}(B_{2r})\left |\frac{u-(u)_{2r}}{2r} \right|^{s} \ dx\\
\le &c\mint_{B_{2r}}\left |\frac{u-(u)_{2r}}{2r} \right | ^{p}+[b]_{0,\beta}(\oh{4r})^{\beta}\left |\frac{u-(u)_{2r}}{2r} \right|^{s}+b_{i}(B_{2r})\left |\frac{u-(u)_{2r}}{2r} \right|^{s} \ dx\\ 
\le &c\mint_{B_{2r}}\left |\frac{u-(u)_{2r}}{2r} \right |^{p}+\oh{2b_{i}(B_{2r})}\left |\frac{u-(u)_{2r}}{2r} \right |^{s} \ dx\le c_{2}\mint_{B_{2r}}H_{0}^{s}\left(\frac{u-(u)_{2r}}{\oh{2r}}\right) \ dx,
\end{flalign*}
since, being $r\in (0,1)$, $r^{\beta}\le r^{\beta-\gamma_{b}}$. Here, $c_{2}=c_{2}(n,p,q,s,[a]_{0,\alpha},\alpha,\nr{u}_{L^{\infty}(\Omega_{0})})$. If $n<p(1+\delta_{g})$ we have, by exploiting \eqref{gam1} and \eqref{gam2},
\begin{flalign*}
\mint_{B_{r}}H(x,Du) \ dx \le& c\mint_{B_{2r}}H\left(x,\frac{u-(u)_{2r}}{r}\right) \ dx\\
\le &c \mint_{B_{2r}}\left(1+\oh{8}[a]_{0,\alpha}r^{\alpha-\gamma_{a}+(\lambda_{g}-1)(q-p)}[u]_{C^{0,\lambda_{g}}(\Omega_{0})}^{q-p}\right)\left |\frac{u-(u)_{2r}}{2r} \right |^{p} \ dx\\
&+c\mint_{B_{2r}}\left(b(x)-b_{i}(B_{2r})\right)\left |\frac{u-(u)_{2r}}{2r} \right|^{s}+b_{i}(B_{2r})\left |\frac{u-(u)_{2r}}{2r} \right|^{s} \ dx\\
\le &c\mint_{B_{2r}}\left |\frac{u-(u)_{2r}}{2r} \right | ^{p}+[b]_{0,\beta}(\oh{4r})^{\beta}\left |\frac{u-(u)_{2r}}{2r} \right|^{s}+b_{i}(B_{2r})\left |\frac{u-(u)_{2r}}{2r} \right|^{s} \ dx\\ 
\le &c\mint_{B_{2r}}\left |\frac{u-(u)_{2r}}{2r} \right |^{p}+\oh{2b_{i}(B_{2r})}\left |\frac{u-(u)_{2r}}{2r} \right |^{s} \ dx\le c_{2}\mint_{B_{2r}}H_{0}^{s}\left(\frac{u-(u)_{2r}}{\oh{2r}}\right) \ dx,
\end{flalign*}
with \oh{$c_{2}=c_{2}(n,p,q,s,[a]_{0,\alpha},\alpha,[u]_{C^{0,\lambda_{g}}(\Omega_{0})})$}.
If $\texttt{deg}_{\beta}(B_{2r})$ is in force, then, as before, for $n\ge p(1+\delta_{g})$, we have
\begin{flalign*}
\mint_{B_{r}}H(x,Du) \ dx \le& c\mint_{B_{2r}}H\left(x,\frac{u-(u)_{2r}}{r}\right) \ dx\\
\le &c \mint_{B_{2r}}\left(1+\oh{8}[b]_{0,\beta}r^{\beta-\gamma_{b}+p-s}\nr{u}_{L^{\infty}(\Omega_{0})}^{s-p}\right)\left |\frac{u-(u)_{2r}}{2r} \right |^{p} \ dx\\
&+c\mint_{B_{2r}}\left(a(x)-a_{i}(B_{2r})\right)\left |\frac{u-(u)_{2r}}{2r} \right|^{q}+a_{i}(B_{2r})\left |\frac{u-(u)_{2r}}{2r} \right|^{q} \ dx\\
\le &c\mint_{B_{2r}}\left |\frac{u-(u)_{2r}}{2r} \right |^{p}+2a_{i}(B_{2r})\left |\frac{u-(u)_{2r}}{2r} \right |^{q} \ dx\le c_{3}\mint_{B_{2r}}H_{0}^{q}\left(\frac{u-(u)_{2r}}{\oh{2r}}\right) \ dx,
\end{flalign*}
where $c_{3}=c_{3}(n,p,q,s,[b]_{0,\beta},\beta, \nr{u}_{L^{\infty}(\Omega_{0})})$. Moreover, if $n<p(1+\delta_{g})$ we obtain
\begin{flalign*}
\mint_{B_{r}}H(x,Du) \ dx \le& c\mint_{B_{2r}}H\left(x,\frac{u-(u)_{2r}}{r}\right) \ dx\\
\le &c \mint_{B_{2r}}\left(1+\oh{8}[b]_{0,\beta}r^{\beta-\gamma_{b}+(\lambda_{g}-1)(s-p)}[u]_{C^{0,\lambda_{g}}(\Omega_{0})}^{s-p}\right)\left |\frac{u-(u)_{2r}}{2r} \right |^{p} \ dx\\
&+c\mint_{B_{2r}}\left(a(x)-a_{i}(B_{2r})\right)\left |\frac{u-(u)_{2r}}{2r} \right|^{q}+a_{i}(B_{2r})\left |\frac{u-(u)_{2r}}{2r} \right|^{q} \ dx\\
\le &c\mint_{B_{2r}}\left |\frac{u-(u)_{2r}}{2r} \right |^{p}+\oh{2a_{i}(B_{2r})}\left |\frac{u-(u)_{2r}}{2r} \right |^{q} \ dx\le c_{3}\mint_{B_{2r}}H_{0}^{q}\left(\frac{u-(u)_{2r}}{\oh{2r}}\right) \ dx,
\end{flalign*}
with $c_{3}=c_{3}(n,p,q,s,[b]_{0,\beta},\beta, [u]_{C^{0,\lambda_{g}}(\Omega_{0})})$.\\
Finally, if $\texttt{ndeg}(B_{r})$ \oh{holds, then} by \eqref{cacc}, \eqref{mathAB}, the fact that either if $n\ge p(1+\delta_{g})$ or if $n<p(1+\delta_{g})$, $\alpha \ge \alpha-\gamma_{a}$ and $\beta\ge \beta-\gamma_{b}$, and the very definition of $\texttt{ndeg}(B_{r})$ we have
\begin{flalign*}
\mint_{B_{r}}H(x,Du) \ dx \le& c\mint_{B_{2r}}H\left(\frac{u-(u)_{2r}}{\oh{r}}\right) \ dx\\
\le &c\mint_{B_{2r}}\left | \frac{u-(u)_{2r}}{2r}\right |^{p}+\left(a(x)-a_{i}(B_{r})\right)\left |\frac{u-(u)_{2r}}{2r} \right |^{(q-p)+p}\\
&+ (b(x)-b_{i}(B_{r}))\left |\frac{u-(u)_{2r}}{2r}\right|^{(s-p)+p}+a_{i}(B_{r})\left |\frac{u-(u)_{2r}}{2r}\right|^{q}+b_{i}(B_{r})\left |\frac{u-(u)_{2r}}{2r}\right|^{s} \ dx \\
\le & c\mint_{B_{2r}}\left |\frac{u-(u)_{2r}}{2r} \right |^{p}+[a]_{0,\alpha}(\oh{4r})^{\alpha}\left |\frac{u-(u)_{2r}}{2r} \right |^{q}+[b]_{0,\beta}(\oh{4r})^{\beta}\left |\frac{u-(u)_{2r}}{2r} \right |^{s} \ dx\\
&+ c\mint_{B_{2r}}H_{0}\left(\frac{u-(u)_{2r}}{2r}\right) \ dx \le c_{4} \mint_{B_{2r}}H_{0}\left(\frac{u-(u)_{2r}}{2r}\right) \ dx,
\end{flalign*}
with $c_{4}=c_{4}(n,p,q,s,[a]_{0,\alpha},[b]_{0,\beta},\alpha,\beta)$.
\end{proof}

We conclude this section by recalling a quantitative Harmonic-approximation type result from \cite{bcm}. We shall report it in the form that better fits our necessities.
\begin{lemma}\label{quahar}
Let $B_{r}\subset \mathbb{R}^{n}$ be a ball, $\varepsilon \in (0,1)$, $\tilde{H}$ be one of the Young functions defined in \eqref{yfs} and $v \in W^{1,\tilde{H}}(B_{2r})$ be a map satisfying the following estimates:
\begin{flalign}
\mint_{B_{2r}}\tilde{H}(Dv) \ dx \le \tilde{c}_{1},\label{har1}
\end{flalign}
and
\begin{flalign}\label{har2}
\mint_{B_{r}}\tilde{H}(Dv)^{1+\sigma_{0}} \ dx \le \tilde{c}_{2},
\end{flalign}
where $\tilde{c}_{1},\tilde{c}_{2}\ge 1$ and $\sigma_{0}>0$ are fixed constants. Moreover, assume that
\begin{flalign}\label{har3}
\left | \ \mint_{B_{r}}D\tilde{H}(Dv)\cdot D\varphi \ dx\ \right |\le \varepsilon \nr{D\varphi}_{L^{\infty}(B_{r})} \ \ \mathrm{for \ all\ }\varphi \in C^{\infty}_{c}(B_{r}),
\end{flalign}
for some $\varepsilon \in (0,1)$. Then there exists a function $\tilde{h}\in W^{1,\tilde{H}}_{v}(B_{r})$ such that the following conditions are satisfied:
\begin{flalign}
&\mint_{B_{r}}D\tilde{H}(D\tilde{h})\cdot D\varphi \ dx =0 \ \ \mathrm{for \ all \ }\varphi \in C^{\infty}_{c}(B_{r}),\label{har4}\\
&\mint_{B_{r}}\tilde{H}(D\tilde{h})^{1+\oh{\sigma_{1}}} \ dx \le c(n,p,q,s,\sigma_{0})\tilde{c}_{2},\label{har5}\\
&\oh{\mint_{B_{r}}\widetilde{\mathcal{V}}(Dv,D\tilde{h})^{2} \ dx \le c\varepsilon ^{m}}\label{har6},
\end{flalign}
where \oh{$\tilde{\mathcal{V}}$ is the corresponding auxiliary function defined in \eqref{vfs}, $c=c(n,p,q,s,\tilde{c}_{1},\tilde{c}_{2})$, $\sigma_{1}=\sigma_{1}(n,p,q,s,\sigma_{0}) \in (0,\sigma_{0})$, $m=m(n,p,q,s,\sigma_{0})>0$.} 
\end{lemma}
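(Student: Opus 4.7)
The plan is to construct $\tilde h$ as the Dirichlet minimizer of $\tilde H$ with boundary datum $v$, and to convert the \emph{weak} closeness of $v$ to being $\tilde H$-harmonic expressed by \eqref{har3} into the \emph{strong} closeness \eqref{har6} measured by $\widetilde{\mathcal V}$, via a Lipschitz truncation argument. First, I let $\tilde h$ be the unique minimizer of $w \mapsto \int_{B_r}\tilde H(Dw)\,dx$ over $v + W^{1,\tilde H}_0(B_r)$; existence and uniqueness follow from the direct method together with strict convexity, since $\tilde H$ (one of the Young functions in \eqref{yfs}) satisfies the $\Delta_2$ condition and is smooth. Its Euler-Lagrange equation is precisely \eqref{har4}, and minimality gives $\mint_{B_r}\tilde H(D\tilde h)\,dx \le \mint_{B_r}\tilde H(Dv)\,dx \le c\tilde c_1$. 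The higher integrability bound \eqref{har5} is then obtained by applying the constant-coefficient version of Lemma \ref{L5} (as explicitly noted in the paragraph preceding it): from \eqref{har2} the boundary datum $v$ has $\tilde H(Dv) \in L^{1+\sigma_0}(B_r)$, hence there exist $\sigma_1 = \sigma_1(n,p,q,s,\sigma_0) \in (0,\sigma_0)$ and $c = c(n,p,q,s,\sigma_0)$ with
\[
\mint_{B_r}\tilde H(D\tilde h)^{1+\sigma_1}\,dx \le c\Big(\Big(\mint_{B_r}\tilde H(D\tilde h)\,dx\Big)^{1+\sigma_1} + \mint_{B_r}\tilde H(Dv)^{1+\sigma_1}\,dx\Big) \le c\tilde c_2\,.
\]

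For the main estimate \eqref{har6} I subtract \eqref{har4} from \eqref{har3} to obtain, for every $\varphi \in C^\infty_c(B_r)$,
\[
\Big|\mint_{B_r}\bigl(D\tilde H(Dv) - D\tilde H(D\tilde h)\bigr)\cdot D\varphi\,dx\Big| \le \varepsilon\nr{D\varphi}_{L^\infty(B_r)}\,.
\]
I would like to insert $\varphi = v - \tilde h \in W^{1,\tilde H}_0(B_r)$, but only an integral bound on $D(v-\tilde h)$ is available. The standard remedy is a Lipschitz truncation: for each $\lambda \ge 1$ choose $w_\lambda \in W^{1,\infty}_0(B_r)$ such that $\nr{Dw_\lambda}_{L^\infty(B_r)} \le c\lambda$, $w_\lambda = v - \tilde h$ on a good set $\mathcal G_\lambda \subset B_r$, and
\[
\snr{B_r \setminus \mathcal G_\lambda} \le \frac{c}{\lambda^{p(1+\sigma_1)}}\int_{B_r}\snr{D(v-\tilde h)}^{p(1+\sigma_1)}\,dx\,,
\]
which uses the $L^{p(1+\sigma_1)}$-integrability of $D(v-\tilde h)$ provided by \eqref{har2} and \eqref{har5}. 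A density argument extends the displayed inequality to $\varphi = w_\lambda$, giving a right-hand side bounded by $c\varepsilon\lambda$.

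The left-hand side is then split into integrals over $\mathcal G_\lambda$ and $B_r \setminus \mathcal G_\lambda$. On $\mathcal G_\lambda$ we have $Dw_\lambda = D(v-\tilde h)$, and applying the monotonicity bound \eqref{control_2} componentwise to $V_p, V_q, V_s$ yields
\[
\bigl(D\tilde H(Dv) - D\tilde H(D\tilde h)\bigr)\cdot D(v - \tilde h) \ge c^{-1}\widetilde{\mathcal V}(Dv, D\tilde h)^2\,,
\]
so this integral dominates $c^{-1}\mint_{\mathcal G_\lambda}\widetilde{\mathcal V}(Dv, D\tilde h)^2\,dx$. On the bad set, H\"older's inequality together with the bounds on $D\tilde H(Dv)$ and $D\tilde H(D\tilde h)$ from \eqref{har1}-\eqref{har2} and \eqref{har5}, combined with the measure estimate for $B_r \setminus \mathcal G_\lambda$, bounds the remaining contributions by a term of order $\lambda^{-\tau}$, for some $\tau = \tau(n,p,q,s,\sigma_0) > 0$. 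Putting everything together produces
\[
\mint_{B_r}\widetilde{\mathcal V}(Dv, D\tilde h)^2\,dx \le c\bigl(\varepsilon\lambda + \lambda^{-\tau}\bigr)\,,
\]
and optimizing by $\lambda = \varepsilon^{-1/(1+\tau)}$ gives \eqref{har6} with $m = \tau/(1+\tau)$.

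The main obstacle is executing the Lipschitz truncation cleanly in the Orlicz setting of $\tilde H$ so that the good-set estimate reproduces exactly the auxiliary V-functional $\widetilde{\mathcal V}^2$ from \eqref{vfs}, while the bad-set contribution balances against $\varepsilon\lambda$ with no parasitic dependence on $[a]_{0,\alpha}, [b]_{0,\beta}$. The auxiliary character of $\tilde H$ (constant coefficients $a_0, b_0$) is decisive here: it ensures uniform monotonicity of $D\tilde H$ with constants depending only on $n,p,q,s$, and guarantees that the final exponents $\sigma_1, \tau, m$ depend only on $n,p,q,s,\sigma_0$, as claimed in the statement.
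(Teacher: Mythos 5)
Your outline is essentially the same as the paper's: take $\tilde h$ to be the Dirichlet minimizer with boundary datum $v$, get \eqref{har4} from the Euler--Lagrange equation and \eqref{har5} from the constant-coefficient version of Lemma~\ref{L5}, then run a Lipschitz truncation of $v-\tilde h$ and pass from the weak smallness \eqref{har3} to the strong bound \eqref{har6} by splitting into good/bad sets and optimizing in $\lambda$.

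Two points where your sketch is thinner than the paper's argument and would need filling. First, after the good-set/bad-set split you only control $\int_{\mathcal G_\lambda}\widetilde{\mathcal V}(Dv,D\tilde h)^2$, yet your displayed conclusion puts $\mint_{B_r}\widetilde{\mathcal V}^2$ on the left. You still need a separate estimate for $\int_{B_r\setminus\mathcal G_\lambda}\widetilde{\mathcal V}^2$ (the paper does this via a small-exponent $\theta$-interpolation followed by a H\"older step exploiting the $(1+\sigma_g)$-higher integrability of $\widetilde{\mathcal V}^2$; a direct H\"older using that higher integrability and the bad-set measure bound works equally well). Second, on the bad set you propose to control $\int_{B_r\setminus\mathcal G_\lambda}|D\tilde H(Dv)|+|D\tilde H(D\tilde h)|$ directly by H\"older with exponent $1+\sigma_1$, but bounding $\mint |D\tilde H(Dw)|^{1+\sigma_1}$ uniformly in the constants $a_0,b_0$ is not immediate from $\mint\tilde H(Dw)^{1+\sigma_1}\le c$: the pointwise relation $|D\tilde H(z)|\lesssim \tilde H(z)/|z|$ degenerates as $|z|\to 0$, and a naive bound $|D\tilde H(z)|\lesssim \tilde H(z)^{1-1/p}$ introduces powers of $a_0,b_0$. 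The paper sidesteps this cleanly by applying a Young-type inequality with the convex conjugate $\tilde H^*$ and the identity $\tilde H^*\bigl(\tilde H(t)/t\bigr)\le\tilde H(t)$, at the price of one extra parameter $\sigma$ to optimize. If you intend to avoid the conjugate-function trick, you need to verify that your bad-set H\"older estimate really carries constants depending only on $n,p,q,s,\sigma_0$.
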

\begin{proof}
The proof for $\tilde{H}=H_{0}^{p}, H_{0}^{q}, H_{0}^{s}$ is contained in \oh{\cite[Lemma 1]{bcm}}, so we focus on $\tilde{H}=H_{0}$. The proof we provide is in some sense a simplified version of the original one since we do not need a powerful result such as Theorem 5.1 from \cite{colmin2}. In fact we can recover some extra boundary integrability from Lemma \ref{L5}.\\
Define $h_{0}$ to be the solution to the Dirichlet problem
\begin{flalign*}
h_0 \mapsto \min_{w \in W^{1,H_{0}}_{v}(B_{r})}\int_{B_{r}}H_{0}(Dw) \ dx.
\end{flalign*}
By minimality \eqref{har4} is verified, since it is the Euler-Lagrange equation associated to \oh{the above variational problem}. Moreover, \oh{it follows from \eqref{har1} that}
\begin{flalign}\label{h3}
\mint_{B_{r}}H_{0}(Dh_{0}) \ dx \le \mint_{B_{r}}H_{0}(Dv) \ dx \le 2^{n}\tilde{c}_{1}.
\end{flalign}
Now, by the previous inequality, Lemma \ref{L5} with $a(\cdot)\equiv \const$ and $b(\cdot)\equiv \const$\oh{, and \eqref{har2},} we obtain
\begin{flalign}\label{gehc}
\mint_{B_{r}}H_{0}(Dh_{0})^{1+\sigma_{g}} \ dx\le & c\left\{\left(\mint_{B_{r}}H_{0}(Dh_{0}) \ dx\right)^{1+\sigma_{g}}+\mint_{B_{r}}H_{0}(Dv)^{1+\sigma_{g}} \ dx\right\}\le c\left((2^{n}\tilde{c}_{1})^{1+\sigma_{g}}+\tilde{c}_{2}\right)\oh{=:}\tilde{c}_{3}
\end{flalign}
\oh{for some $0<\sigma_{g}<\sigma_{0}$, which is \eqref{har5} with $\sigma_{1}=\sigma_{g}$. Here $\tilde{c}_{3}=\tilde{c}_{3}(n,p,q,s,\sigma_{g},\tilde{c}_{1},\tilde{c}_{2})$}.\\
Set $w=h_{0}-v\in W^{1,H_{0}}_{0}(B_{r})$ and let $\lambda \ge 1$ to be fixed later and consider $w_{\lambda}\in W^{1,\infty}_{0}(B_{r})$, the Lipschitz truncation of $w$ given by the main result in \cite{acefus1} and satisfying
\begin{flalign}\label{lips}
\nr{Dw_{\lambda}}_{L^{\infty}(B_{r})}\le c(n)\lambda\quad \mathrm{and}\quad \{w_{\lambda} \not = w\}\subset \{M(\snr{Dw})>\lambda\}\cup negligible \ set.
\end{flalign}
Using such properties, the fact that $t\mapsto H_{0}(t)$ is increasing, Markov's inequality, \eqref{har2}, \eqref{gehc} and the maximal theorem we deduce that
\oh{
\begin{flalign}\label{h1}
\frac{\snr{\{w_{\lambda}\not = w\}}}{\snr{B_{r}}}&\le \frac{\snr{B_{r}\cap\{M(\snr{Dw})\ge \lambda\}}}{\snr{B_{r}}}\nonumber \\
&\le\frac{1}{H_{0}(\lambda)^{1+\sigma_{g}}}\mint_{B_{r}}H_{0}(M(\snr{Dw}))^{1+\sigma_{g}} \ dx\nonumber \\
&\le\frac{c}{H_{0}(\lambda)^{1+\sigma_{g}}}\mint_{B_{r}}H_{0}(Dw)^{1+\sigma_{g}} \ dx\nonumber\\
&\le\frac{c}{H_{0}(\lambda)^{1+\sigma_{g}}}\mint_{B_{r}}H_{0}(Dh_{0})^{1+\sigma_{g}}+H_{0}(Dv)^{1+\sigma_{g}} \ dx \nonumber\\
&\le\frac{c}{H_{0}(\lambda)^{1+\sigma_{g}}} \left[\mint_{B_{r}}H_{0}(Dh_{0})^{1+\sigma_{g}} \ dx + \left(\mint_{B_{r}} H_{0}(Dv)^{1+\sigma_{0}} \ dx \right)^{\frac{1+\sigma_{g}}{1+\sigma_{0}}} \right]\nonumber\\
&\le \frac{c(\tilde{c}_{3}+\tilde{c}_{2}^{\frac{1+\sigma_{g}}{1+\sigma_{0}}})}{H_{0}(\lambda)^{1+\sigma_{g}}} \le \frac{c(\tilde{c}_{3}+\tilde{c}_{2})}{H_{0}(\lambda)^{1+\sigma_{g}}}, 
\end{flalign}}
where $c=c(n,p,q,s,\sigma_{g},\sigma_{0})$.\\
Now we test \eqref{har4} against $w_{\lambda}$, \oh{which is admissible by density,} to get
\oh{
\begin{flalign*}
(\mathrm{I})=&\mint_{B_{r}}(DH_{0}(Dh_{0})-DH_{0}(Dv))\cdot Dw_{\lambda}\chi_{\{w_{\lambda}=w\}} \ dx\\
&=-\mint_{B_{r}}DH_{0}(Dv)\cdot Dw_{\lambda} \ dx-\mint_{B_{r}}(DH_{0}(Dh_{0})-DH_{0}(Dv))\cdot Dw_{\lambda}\chi_{\{w\not =w_{\lambda}\}} \ dx =(\mathrm{II})+(\mathrm{III}).
\end{flalign*}}
The properties of $H_{0}$ and \oh{\eqref{control_2}} give
\begin{flalign}\label{h4}
(\mathrm{I})\ge c\mint_{B_{r}}\cd{\mathcal{V}_{0}(Dv,Dh_{0})}\chi_{\{w_{\lambda}=w\}} \ dx,
\end{flalign}
where \oh{$c=c(n,p,q,s)>0$}. Moreover, by \eqref{har3} and $\eqref{lips}_{1}$ we see that
\begin{flalign}\label{h0}
\oh{\snr{(\mathrm{II})}}\le c\varepsilon \lambda,
\end{flalign}
with $c=c(n)$. Before estimating term $(\mathrm{III})$, we recall a standard Young type inequality holding for $H_{0}$, see \cite{bcm}: for all $\sigma \in (0,1)$,
\begin{flalign}\label{yi}
xy\le \sigma^{1-s}H_{0}(x)+\sigma H^{*}_{0}(y),
\end{flalign}
where $H^{*}_{0}(y)=\sup_{x>0}\{yx-H_{0}(x)\}$ is the convex conjugate of $H_{0}$. Furthermore, there holds: $H_{0}^{*}\left(\frac{H_{0}(t)}{t}\right)\le H_{0}(t)$, see \cite{barlin} for more details. Now, using $\eqref{lips}_{1}$, \eqref{har1}, \eqref{h3}, \eqref{yi} and \eqref{h1} we estimate, for a certain fixed $\sigma \in (0,1)$,
\begin{flalign}\label{h2}
\snr{(\mathrm{III})}\le & s\nr{Dw_{\lambda}}_{L^{\infty}(B_{r})}\mint_{B_{r}}\left(\frac{H_{0}(Dh_{0})}{\snr{Dh_{0}}}+\frac{H_{0}(Dv)}{\snr{Dv}}\right)\chi_{\{w_{\lambda}\not =w\}} \ dx\nonumber \\
\le &\sigma \mint_{B_{r}}\left[H_{0}^{*}\left(\frac{H_{0}(Dh_{0})}{\snr{Dh_{0}}}\right)+H_{0}^{*}\left(\frac{H_{0}(Dv)}{\snr{Dv}}\right)\right] \ dx+\frac{cH_{0}(\nr{Dw_{\lambda}}_{L^{\infty}(B_{r})})}{\sigma^{s-1}}\frac{\snr{\{w_{\lambda}\not =w \}}}{\snr{B_{r}}}\nonumber \\
\le &\sigma \mint_{B_{r}}H_{0}(Dh_{0})+H_{0}(Dv) \ dx +\frac{c}{\sigma^{s-1}H_{0}(\lambda)^{\sigma_{g}}}\le \oh{2^{n+1}}\sigma\tilde{c}_{1}+\frac{c}{\sigma^{s-1}\lambda^{p\sigma_{g}}},
\end{flalign}
where we also used the fact that \oh{$H_{0}(\lambda)\ge \lambda^{p}$ since $\lambda \ge 1$.} Here $c=c(n,p,q,s,\sigma_{g})$.\\
Collecting \eqref{h4}, \eqref{h0} and \eqref{h2} we obtain
\begin{flalign}\label{h5}
\mint_{B_{r}}\oh{\mathcal{V}_{0}(Dv,Dh_{0})^{2}}\chi_{\{w_{\lambda}=w\}} \ dx\le &c\left(\varepsilon \lambda+\sigma+\sigma^{1-s}\lambda^{-p\sigma_{g}}\right),
\end{flalign}
for $c=c(\tilde{c}_{1},\tilde{c}_{2},\tilde{c}_{3},n,p,q,s,\sigma_{g})$ and $\sigma\in (0,1)$ to be fixed. For $\theta \in (0,1)$, by H\"older's inequality and \eqref{h5} we estimate
\begin{flalign*}
\left(\mint_{B_{r}}\mathcal{V}(Dv,Dh_{0})^{2\theta}\chi_{\{w_{\lambda} = w\}} \ dx\right)^{\frac{1}{\theta}}\le c\left(\varepsilon \lambda+\sigma+\sigma^{1-s}\lambda^{-p\sigma_{g}}\right).
\end{flalign*}
Again, by H\"older's inequality, \eqref{h1}, \eqref{har1} and \eqref{h3} we have
\begin{flalign}\label{h6}
\left(\mint_{B_{r}}\mathcal{V}(Dv,Dh_{0})^{2\theta}\chi_{\{w_{\lambda}\not = w\}} \ dx\right)^{\frac{1}{\theta}}\le &c\left(\frac{\snr{\{w_{\lambda}\not = w\}}}{\snr{B_{r}}}\right)^{\frac{1-\theta}{\theta}}\mint_{B_{r}}\mathcal{V}(Dv,D_{0})^{2}\ dx\nonumber \\
\le &cH_{0}(\lambda)^{-\frac{(1+\sigma_{g})(1-\theta)}{\theta}}\mint_{B_{r}}H_{0}(Dh_{0})+H_{0}(Dv) \ dx\le c\lambda^{-\frac{p(1-\theta)}{\theta}},
\end{flalign}
where $c=c(\tilde{c}_{1},\tilde{c}_{2},\tilde{c}_{3},n,p,q,s,\sigma_{g})$. Choosing in \eqref{h5} and \eqref{h6} $\lambda=\varepsilon^{-1/2}$ and $\sigma=\varepsilon^{\frac{3p\sigma_{g}}{4(s-1)}}$ we obtain that
\begin{flalign}\label{h7}
\left(\mint_{B_{r}}\mathcal{V}(Dv,Dh_{0})^{2\theta} \ dx\right)^{\frac{1}{\theta}}\le c\varepsilon^{\oh{2}m},
\end{flalign}
with $c=c(\tilde{c}_{1},\tilde{c}_{2},\tilde{c}_{3},n,p,q,s,\sigma_{g})$ and $m=\oh{\frac{1}{2}}\min\left\{\frac{1}{2},\frac{p\sigma_{g}}{4},\frac{\oh{3}p\sigma_{g}}{4(s-1)},\frac{p(1-\theta)}{2\theta}\right\}$. Notice that in the above estimates we still have a degree of freedom in $\theta$. Applying H\"older's inequality with exponents $\oh{\frac{2(1+\sigma_{g})}{1+2\sigma_{g}}}$ and $2(1+\sigma_{g})$ we obtain
\begin{flalign*}
\mint_{B_{r}}\mathcal{V}(Dv,Dh_{0})^{2} \ dx \le &\left(\mint_{B_{r}}\mathcal{V}(Dv,Dh_{0})^{\frac{2(1+\sigma_{g})}{1+2\sigma_{g}}} \ dx\right)^{\frac{1+2\sigma_{g}}{2(1+\sigma_{g})}}\left(\mint_{B_{r}}\mathcal{V}(Dv,Dh_{0})^{2(1+\sigma_{g})}\ dx\right)^{\frac{1}{1+\sigma_{g}}}\\
\le& c\varepsilon^{m}\left(\mint_{B_{r}}(H_{0}(Dh_{0})+H_{0}(Dv))^{1+\sigma_{g}} \ dx\right)^{\frac{1}{1+\sigma_{g}}}\le c\varepsilon^{m},
\end{flalign*}
with $c=c(\tilde{c}_{1},\tilde{c}_{2},\tilde{c}_{3},n,p,q,s,\sigma_{g})$. Here we used \eqref{har2}, \eqref{gehc}, and \eqref{h7} with $\theta=\frac{1+\sigma_{g}}{1+2\sigma_{g}}<1$. Recalling \eqref{control}, \oh{we can conclude from the previous estimate that
\begin{flalign*}
\mint_{B_{r}} \mathcal{V}(Dv,Dh_{0})^{2} \ dx \le c\varepsilon^{m},
\end{flalign*}
}
which is what we wanted. 
\end{proof}

\section{Morrey decay and Theorem \ref{T0}}
The proof of Theorem \ref{T0} goes in two moments: first, we prove that a suitable manipulation of a local minimizer $u$ of \eqref{hmvp} satisfies the assumptions of Lemma \ref{quahar}, then we exploit this to start an iteration which will eventually render the announced decay.\\\\

\emph{Step 1: Quantitative harmonic approximation}. Define the quantities
\begin{flalign*}
E=E(u,B_{2r})=\left(\mint_{B_{2r}}H(x,Du) \ dx \right)^{\frac{1}{p}}\quad \mathrm{and}\quad v=\frac{u}{E},
\end{flalign*}
where $u \in W^{1,H(\cdot)}_{\mathrm{loc}}(\oh{\Omega})$ is a local minimizer of \eqref{hmvp} and $B_{2r}\Subset \Omega_{0}\Subset \Omega$ is any ball of radius $r\le \frac{1}{2}$. From now on, \oh{we will consider the following auxiliary Young functions}
\begin{flalign*}
\begin{cases}
\ H_{0}(z)=\snr{z}^{p}+a_{i}(B_{2r})\snr{z}^{q}+b_{i}(B_{2r})\snr{z}^{s}, \quad &\tilde{H}_{0}(z)=\snr{z}^{p}+a_{i}(B_{2r})E^{q-p}\snr{z}^{q}+b_{i}(B_{2r})E^{s-p}\snr{z}^{s},\\
\ H_{0}^{s}(z)=\snr{z}^{p}+b_{i}(B_{2r})\snr{z}^{s},  \quad &\tilde{H}_{0}^{s}(z)=\snr{z}^{p}+b_{i}(B_{2r})E^{s-p}\snr{z}^{s},\\
\ H_{0}^{q}(z)=\snr{z}^{p}+a_{i}(B_{2r})\snr{z}^{q},  \quad &\tilde{H}_{0}^{q}(z)= \snr{z}^{p}+a_{i}(B_{2r})E^{q-p}\snr{z}^{q},\\
\ H_{0}^{p}(z)=\snr{z}^{p},
\end{cases}
\end{flalign*}
\oh{and
\begin{flalign*}
\begin{cases}
\mathcal{V}_{0}(z_{1},z_{2})^{2}=\snr{V_{p}(z_{1})-V_{p}(z_{2})}^{2}+a_{i}(B_{2r})\snr{V_{q}(z_{1})-V_{q}(z_{2})}^{2}+b_{i}(B_{2r})\snr{V_{s}(z_{1})-V_{s}(z_{2})}^{2},\\
\mathcal{V}_{0}^{s}(z_{1},z_{2})^{2}=\snr{V_{p}(z_{1})-V_{p}(z_{2})}^{2}+b_{i}(B_{2r})\snr{V_{s}(z_{1})-V_{s}(z_{2})}^{2},\\
\mathcal{V}_{0}^{q}(z_{1},z_{2})^{2}=\snr{V_{p}(z_{1})-V_{p}(z_{2})}^{2}+a_{i}(B_{2r})\snr{V_{q}(z_{1})-V_{q}(z_{2})}^{2},\\
\mathcal{V}_{0}^{p}(z_{1},z_{2})^{2}=\snr{V_{p}(z_{1})-V_{p}(z_{2})}^{2},
\end{cases}
\end{flalign*}
}
where $a_{i}(\cdot)$ and $b_{i}(\cdot)$ are defined as in \eqref{infab}. 
\oh{Since $u$ is a local minimizer of \eqref{hmvp}, a straightforward computation shows that $v$ is a local minimizer of the functional}
\begin{flalign*}
\tilde{\mathcal{H}}(w,\Omega)=\int_{\Omega}\snr{Dw}^{p}+a(x)E^{q-p}\snr{Dw}^{q}+b(x)E^{s-p}\snr{Dw}^{s} \ dx.
\end{flalign*}
\oh{Then, by scaling, it is easy to see that Lemma \ref{L3} holds true also for $v$ with the same extra integrability exponent $\delta_{g}=\delta_{g}(n,p,q,s,[a]_{0,\alpha},[b]_{0,\beta},\nr{Du}_{L^{p}(B_{r})})$ as $u$}. For any open $U\Subset \Omega$ it satisfies the Euler-Lagrange equation
\begin{flalign}\label{elv}
0=\int_{U}\left(p\snr{Dv}^{p-2}+qa(x)E^{q-p}\snr{Dv}^{q-2}+sb(x)E^{s-p}\snr{Dv}^{s-2}\right)Dv\cdot D\varphi \ dx \quad \mathrm{for \ all \ }\varphi \in C^{\infty}_{c}(U).
\end{flalign}
Moreover, if $\tilde{H}$ denotes $H_{0}^{p}$, $\tilde{H}_{0}^{q}$, $\tilde{H}_{0}^{s}$ or $\tilde{H}_{0}$, \oh{we see from the definition of $v$ that}
\begin{flalign}\label{h9}
\mint_{B_{2r}}\tilde{H}(Dv) \ dx \le E^{-p}\mint_{B_{2r}}H(x,Du) \ dx \le 1,
\end{flalign}
which is \eqref{har1} and, by \eqref{h9} and Lemma \ref{L3} we obtain, \oh{for some $\cd{\tilde{\sigma}_{g}}\in (0,\cd{\delta_{g}})$},
\begin{flalign}\label{h13}
\mint_{B_{r}}\tilde{H}(Dv)^{1+\cd{\tilde{\sigma}_{g}}} \ dx \le &\cd{\mint_{B_{r}}H(x,Dv)^{1+\tilde{\sigma}_{g}} \ dx=\left(\mint_{B_{2r}}H(x,Du) \ dx\right)^{-(1+\tilde{\sigma}_{g})}\left(\mint_{B_{r}}H(x,Du)^{1+\tilde{\sigma}_{g}} \ dx \right) }\le c,
\end{flalign}
where $c=c(n,p,q,s,[a]_{0,\alpha},[b]_{0,\beta},\cd{\nr{Du}_{L^{p}(\Omega_{0})}})$ is the constant appearing in Lemma \ref{L3} and this verifies \eqref{har2}. So we see that conditions \eqref{har1}-\eqref{har2} of Lemma \ref{quahar} are \oh{matched} with $\sigma_{0}= \cd{\tilde{\sigma}_{g}}$ no matter what degeneracy (or non degeneracy) condition holds on $B_{2r}$. 
Here \cd{$\sigma_{g}$} is the exponent given by Lemma \ref{L5} depending on \oh{whether} $\tilde{H}$ denotes $\tilde{H}_{0}^{p}$, $\tilde{H}_{0}^{q}$, $\tilde{H}_{0}^{s}$ or $\tilde{H}_{0}$. Clearly we have no problem of integrability, since \cd{$\tilde{\sigma}_{g}<\delta_{g}$}, which is the corresponding exponent coming from Lemma \ref{L3}. \oh{We now define
\begin{flalign}\label{diff}
\sigma_{a}=\alpha-\gamma_{a}-\frac{n(q-p)}{p(1+\delta_{g})}\quad \mathrm{and}\quad\sigma_{b}=\beta-\gamma_{b}-\frac{n(s-p)}{p(1+\delta_{g})}.
\end{flalign}
A simple computation shows that $\sigma_{a}$ and $\sigma_{b}$ are both positive numbers}.\\
\oh{We first assume} $\texttt{deg}(B_{2r})$. From \eqref{elv} we deduce that
\begin{flalign*}
\left |\ \mint_{B_{r}}DH_{0}^{p}(Dv)\cdot D\varphi \ dx \  \right |\le & qE^{q-p}\mint_{B_{r}}a(x)\snr{Dv}^{q-1}\snr{D\varphi} \ dx+sE^{s-p}\mint_{B_{r}}b(x)\snr{Dv}^{s-1}\snr{D\varphi} \ dx \oh{=:(\mathrm{I})_{\texttt{deg}}+(\mathrm{II})_{\texttt{deg}}.}
\end{flalign*}
From the very definition of condition $\texttt{deg}$, Lemma \ref{L3}, \eqref{h9}, H\"older's inequality \oh{and \eqref{diff}} we get
\begin{flalign}\label{h8}
\oh{(\mathrm{I})_{\texttt{deg}}}\le& \cd{cE^{\frac{q-p}{q}}r^{\frac{\alpha-\gamma_{a}}{q}}\nr{D\varphi}_{L^{\infty}(B_{r})}\mint_{B_{r}}(E^{q-p}a(x))^{\frac{q-1}{q}}\snr{Dv}^{q-1} \ dx}\nonumber \\
\le &\cd{c\nr{D\varphi}_{L^{\infty}(B_{r})}\left(\mint_{B_{2r}}H(x,Du) \ dx\right)^{\frac{q-p}{pq}}r^{\frac{\alpha-\gamma_{a}}{q}}\left(\mint_{B_{r}}E^{q-p-q}a(x)\snr{Du}^{q} \ dx\right)^{\frac{q-1}{q}}}\nonumber \\
\le &\cd{c\nr{D\varphi}_{L^{\infty}(B_{r})}\nr{H(\cdot,Du)}_{L^{1+\delta_{g}}(\Omega_{0})}^{\frac{q-p}{pq}}r^{\frac{\alpha-\gamma_{a}}{q}-\frac{n(q-p)}{pq(1+\delta_{g})}}\le c\nr{D\varphi}_{L^{\infty}(B_{r})}r^{\frac{\sigma_{a}}{q}}}
\end{flalign}
with $c_{1}=c_{1}(n,p,q,[a]_{0,\alpha},\alpha,\nr{H(\cdot,Du)}_{L^{1+\delta_{g}}(\Omega_{0})})$. In a totally similar way we obtain
\begin{flalign}\label{h10}
\oh{(\mathrm{II})_{\texttt{deg}}}\le& \cd{cE^{\frac{s-p}{s}}r^{\frac{\beta-\gamma_{b}}{s}}\nr{D\varphi}_{L^{\infty}(B_{r})}\mint_{B_{r}}(E^{s-p}b(x))^{\frac{s-1}{s}}\snr{Dv}^{s-1} \ dx}\nonumber \\
\le &\cd{c\nr{D\varphi}_{L^{\infty}(B_{r})}\left(\mint_{B_{2r}}H(x,Du) \ dx\right)^{\frac{s-p}{ps}}r^{\frac{\beta-\gamma_{b}}{s}}\left(\mint_{B_{r}}E^{s-p-s}b(x)\snr{Du}^{s} \ dx\right)^{\frac{s-1}{s}}}\nonumber \\
\le &\cd{c\nr{D\varphi}_{L^{\infty}(B_{r})}\nr{H(\cdot,Du)}_{L^{1+\delta_{g}}(\Omega_{0})}^{\frac{s-p}{ps}}r^{\frac{\beta-\gamma_{b}}{s}-\frac{n(s-p)}{ps(1+\delta_{g})}}\le c\nr{D\varphi}_{L^{\infty}(B_{r})}r^{\frac{\sigma_{b}}{s}}}
\end{flalign}
where $c_{2}=c_{2}(n,p,s,[b]_{0,\beta},\beta,\nr{H(\cdot,Du)}_{L^{1+\delta_{g}}(\Omega_{0})})$.\\ 
Now we \cd{define $\tilde{\sigma}_{p}:=\frac{1}{2}\min\{q^{-1}\sigma_{a},s^{-1}\sigma_{b}\}>0$ and} fix a threshold radius $\tilde{R}^{1}_{*}$ such that $\max\{c_{1},c_{2}\}(\tilde{R}^{1}_{*})^{\tilde{\sigma_{\oh{p}}}}\le\frac{1}{2}$ and assume that $0<r\le \min\{\tilde{R}^{1}_{*},1\}$. In correspondence of such a choice, by \eqref{h8} and \eqref{h10} we can conclude that
\begin{flalign}\label{h11}
\left |\ \mint_{B_{r}}DH_{0}^{p}(Dv)\cdot D\varphi \ dx \  \right |\le &r^{\tilde{\sigma}_{p}}\nr{D\varphi}_{L^{\infty}(B_{r})},
\end{flalign}
so the assumptions of Lemma \ref{quahar} are matched and there exists a $H_{0}^{p}$-harmonic map $\tilde{h}_{p}$ satisfying in particular \eqref{har6}. It is clear that, if $h_{p}=E\tilde{h}_{p}$, then $h_{p}$ is still $H_{0}^{p}$-harmonic, $\left.h_{p} \right |_{\partial B_{r}}=\left.u \right |_{\partial B_{r}}$ and, by \eqref{har6}, 
\begin{flalign}\label{h12}
\oh{\mint_{B_{r}}\mathcal{V}_{0}^{p}(Du,Dh_{p})^{2} \ dx\le cr^{m_{p}}\mint_{B_{2r}}H(x,Du) \ dx},
\end{flalign}
where $c=c(n,p,q,s,[a]_{0,\alpha},[b]_{0,\beta})$ and $m_{p}=m_{p}(n,p,q,s,[a]_{0,\alpha},[b]_{0,\beta})$. 
Suppose now that $\texttt{deg}_{\alpha}(B_{2r})$ holds. Then, by \eqref{elv} we obtain
\begin{flalign*}
\left | \ \mint_{B_{r}}D\tilde{H}^{s}_{0}(Dv)\cdot D\varphi \ dx\ \right |\le& qE^{q-p}\mint_{B_{r}}a(x)\snr{Dv}^{q-1}\snr{D\varphi} \ dx +sE^{s-p}\mint_{B_{r}}\left(b(x)-\oh{b_{i}(B_{2r})}\right)\snr{Dv}^{s-1}\snr{D\varphi} \ dx\\
\le& \oh{(\mathrm{I})_{\texttt{deg}_{\alpha}}+(\mathrm{II})_{\texttt{deg}_{\alpha}}.}
\end{flalign*}
As before we estimate
\begin{flalign}\label{h14}
\oh{(\mathrm{I})_{\texttt{deg}_{\alpha}}}\le &\cd{c\nr{D\varphi}_{L^{\infty}(B_{r})}\left(\mint_{B_{2r}}H(x,Du) \ dx\right)^{\frac{q-p}{pq}}r^{\frac{\alpha-\gamma_{a}}{q}}\left(\mint_{B_{r}}E^{q-p-q}a(x)\snr{Du}^{q} \ dx\right)^{\frac{q-1}{q}}}\nonumber \\
\le &\cd{c\nr{D\varphi}_{L^{\infty}(B_{r})}\nr{H(\cdot,Du)}_{L^{1+\delta_{g}}(\Omega_{0})}^{\frac{q-p}{pq}}r^{\frac{\alpha-\gamma_{a}}{q}-\frac{n(q-p)}{pq(1+\delta_{g})}}\le c\nr{D\varphi}_{L^{\infty}(B_{r})}r^{\frac{\sigma_{a}}{q}}}
\end{flalign}
with $c_{1}=c_{1}(n,p,q,[a]_{0,\alpha},\alpha,\nr{H(\cdot,Du)}_{L^{1+\delta_{g}}(\Omega_{0})})$, and
\begin{flalign}\label{h15}
\oh{(\mathrm{II})_{\texttt{deg}_{\alpha}}}\le& \cd{c\nr{D\varphi}_{L^{\infty}(B_{r})}E^{\frac{s-p}{s}}r^{\frac{\beta}{s}+\frac{\gamma_{b}(s-1)}{s}}\left(\mint_{B_{r}}(E^{s-p}r^{\beta-\gamma_{b}})^{\frac{s-1}{s}}\snr{Dv}^{s-1} \ dx\right)}\nonumber \\
\le &\cd{c\nr{D\varphi}_{L^{\infty}(B_{r})}r^{\frac{\gamma_{b}(s-1)}{s}}r^{\frac{1}{s}\left(\beta-\frac{n(s-p)}{p(1+\delta_{g})}\right)}\left(\mint_{B_{2r}}E^{-p}b_{i}(B_{2r})\snr{Du}^{s} \ dx\right)^{\frac{s-1}{s}}\le cr^{\frac{\gamma_{b}(s-1)}{s}+\frac{1}{s}\left(\beta-\frac{n(s-p)}{p(1+\delta_{g})}\right)}\nr{D\varphi}_{L^{\infty}(B_{r})}},
\end{flalign}
where $c_{2}=c_{2}(n,p,s,[b]_{0,\beta},\beta,\nr{H(\cdot,Du)}_{L^{1+\delta_{g}}(\Omega_{0})})$.\\
\cd{Define $\tilde{\sigma}_{s}:=\frac{1}{2}\min\left\{\frac{\sigma_{a}}{q},\frac{\gamma_{b}(s-1)}{s}+\frac{1}{s}\left(\beta-\frac{n(s-p)}{p(1+\delta_{g})}\right)\right\}>0$ and} fix a threshold radius $\tilde{R}^{2}_{*}$ such that $\max\{c_{1},c_{2}\}(\tilde{R}^{2}_{*})^{\tilde{\sigma_{\oh{s}}}}\le\frac{1}{2}$ and assume that $0<r\le \min\{\tilde{R}^{1}_{*},\tilde{R}^{2}_{*},1\}$. In correspondence of such a choice, by \eqref{h14} and \eqref{h15} we can conclude that
\begin{flalign}\label{h16}
\left |\ \mint_{B_{r}}D\tilde{H}_{0}^{s}(Dv)\cdot D\varphi \ dx \  \right |\le &r^{\tilde{\sigma}_{s}}\nr{D\varphi}_{L^{\infty}(B_{r})},
\end{flalign}
so the assumptions of Lemma \ref{quahar} are matched and there exists a $\tilde{H}_{0}^{s}$-harmonic map $\tilde{h}_{s}$ satisfying in particular \eqref{har6}. Clearly, if $h_{s}=E\tilde{h}_{s}$, then $h_{s}$ is $H_{0}^{s}$-harmonic, $\left.h_{s} \right |_{\partial B_{r}}=\left.u \right |_{\partial B_{r}}$ and, by \eqref{har6}, 
\begin{flalign}\label{h17}
\oh{\mint_{B_{r}}\mathcal{V}^{s}_{0}(Du,Dh_{s})^{2} \ dx\le cr^{m_{s}}\mint_{B_{2r}}H(x,Du) \ dx},
\end{flalign}
where $c=c(n,p,q,s,[a]_{0,\alpha},[b]_{0,\beta})$ and $m_{s}=m_{s}(n,p,q,s,[a]_{0,\alpha},[b]_{0,\beta})$.\\
This time assume $\texttt{deg}_{\beta}(B_{2r})$ holds. Then, by \eqref{elv} we obtain
\begin{flalign*}
\left | \ \mint_{B_{r}}D\tilde{H}^{q}_{0}(Dv)\cdot D\varphi \ dx\ \right |\le& sE^{s-p}\mint_{B_{r}}b(x)\snr{Dv}^{s-1}\snr{D\varphi} \ dx +qE^{q-p}\mint_{B_{r}}\left(a(x)-\oh{a_{i}(B_{2r})}\right)\snr{Dv}^{q-1}\snr{D\varphi} \ dx\\
\le& \oh{(\mathrm{I})_{\texttt{deg}_{\beta}}+(\mathrm{II})_{\texttt{deg}_{\beta}}.}
\end{flalign*}
As above we estimate
\begin{flalign}\label{h18}
\oh{(\mathrm{I})_{\texttt{deg}_{\beta}}}\le& \cd{c\nr{D\varphi}_{L^{\infty}(B_{r})}E^{\frac{s-p}{s}}r^{\frac{\beta-\gamma_{b}}{s}}\left(\mint_{B_{r}}E^{-p}b(x)\snr{Du}^{s}\right)^{\frac{s-1}{s}}}\nonumber \\
\le &\cd{c\nr{D\varphi}_{L^{\infty}(B_{r})}\nr{H(\cdot,Du)}^{\frac{s-p}{sp}}_{L^{1+\delta_{g}}(\Omega_{0})}r^{\frac{1}{s}\left(\beta-\gamma_{b}-\frac{n(s-p)}{p(1+\delta_{g})}\right)}\le cr^{\frac{\sigma_{b}}{s}}\nr{D\varphi}_{L^{\infty}(B_{r})}},
\end{flalign}
with $c_{1}=c_{1}(n,p,s,[b]_{0,\beta},\beta,\nr{H(\cdot,Du)}_{L^{1+\delta_{g}}(\Omega_{0})})$, and
\begin{flalign}\label{h19}
\oh{(\mathrm{II})_{\texttt{deg}_{\beta}}}\le& \cd{c\nr{D\varphi}_{L^{\infty}(B_{r})}E^{\frac{q-p}{q}}r^{\frac{\alpha}{q}+\frac{\gamma_{a}(q-1)}{q}}\left(\mint_{B_{r}}(E^{q-p}r^{\alpha-\gamma_{a}})^{\frac{q-1}{q}}\snr{Dv}^{q-1} \ dx\right)}\nonumber \\
\le &\cd{c\nr{D\varphi}_{L^{\infty}(B_{r})}r^{\frac{\gamma_{a}(q-1)}{q}}r^{\frac{1}{q}\left(\alpha-\frac{n(q-p)}{p(1+\delta_{g})}\right)}\left(\mint_{B_{2r}}E^{-p}a_{i}(B_{2r})\snr{Du}^{q} \ dx\right)^{\frac{q-1}{q}}\le cr^{\frac{\gamma_{a}(q-1)}{q}+\frac{1}{q}\left(\alpha-\frac{n(q-p)}{p(1+\delta_{g})}\right)}\nr{D\varphi}_{L^{\infty}(B_{r})}},
\end{flalign}
where $c_{2}=c_{2}(n,p,q,[a]_{0,\alpha},\alpha,\nr{H(\cdot,Du)}_{L^{1+\delta_{g}}(\Omega_{0})})$.
\cd{Let $\tilde{\sigma}_{q}:=\frac{1}{2}\min\left\{\frac{\sigma_{b}}{s},\frac{\gamma_{a}(q-1)}{q}+\frac{1}{q}\left(\alpha-\frac{n(q-p)}{p(1+\delta_{g})}\right)\right\}>0$ and} fix a threshold radius $\tilde{R}^{3}_{*}$ such that $\max\{c_{1},c_{2}\}(\tilde{R}^{3}_{*})^{\tilde{\sigma}_{q}}\le\frac{1}{2}$ and assume that $0<r\le \min\{\tilde{R}^{1}_{*},\tilde{R}^{2}_{*},\tilde{R}^{3}_{*},1\}$. In correspondence of such a choice, by \eqref{h14} and \eqref{h15} we can conclude that
\begin{flalign}\label{h20}
\left |\ \mint_{B_{r}}D\tilde{H}_{0}^{q}(Dv)\cdot D\varphi \ dx \  \right |\le &r^{\tilde{\sigma}_{q}}\nr{D\varphi}_{L^{\infty}(B_{r})},
\end{flalign}
so the assumptions of Lemma \ref{quahar} are satisfied and there exists a $\tilde{H}_{0}^{q}$-harmonic map $\tilde{h}_{q}$ satisfying in particular \eqref{har6}. Clearly, if $h_{q}=E\tilde{h}_{q}$, then $h_{q}$ is $H_{0}^{q}$-harmonic, $\left.h_{q} \right |_{\partial B_{r}}=\left.u \right |_{\partial B_{r}}$ and, by \eqref{har6}, 
\begin{flalign}\label{h21}
\oh{\mint_{B_{r}}\oh{\mathcal{V}^{q}_{0}(Du,Dh_{q})}^{2} \ dx\le cr^{m_{q}}\mint_{B_{2r}}H(x,Du) \ dx},
\end{flalign}
where $c=c(n,p,q,s,[a]_{0,\alpha},[b]_{0,\beta})$ and $m_{q}=m_{q}(n,p,q,s,[a]_{0,\alpha},[b]_{0,\beta})$.\\
Finally, suppose $\texttt{ndeg}(B_{2r})$ holds. Then, by \eqref{elv} we obtain
\begin{flalign*}
\left | \ \mint_{B_{r}}D\tilde{H}_{0}(Dv)\cdot D\varphi \ dx\ \right |\le& sE^{s-p}\mint_{B_{r}}\left(b(x)-b_{i}(B_{2r})\right)\snr{Dv}^{s-1}\snr{D\varphi} \ dx +qE^{q-p}\mint_{B_{r}}\left(a(x)-\oh{a_{i}(B_{2r})}\right)\snr{Dv}^{q-1}\snr{D\varphi} \ dx\\
\le& \oh{(\mathrm{I})_{\texttt{ndeg}}+(\mathrm{II})_{\texttt{ndeg}}.}
\end{flalign*}
As above we estimate
\begin{flalign}\label{h22}
\oh{(\mathrm{I})_{\texttt{ndeg}}}\le& \cd{c\nr{D\varphi}_{L^{\infty}(B_{r})}E^{\frac{s-p}{s}}r^{\frac{\beta}{s}+\frac{\gamma_{b}(s-1)}{s}}\left(\mint_{B_{r}}(E^{s-p}r^{\beta-\gamma_{b}})^{\frac{s-1}{s}}\snr{Dv}^{s-1} \ dx\right)}\nonumber \\
\le &\cd{c\nr{D\varphi}_{L^{\infty}(B_{r})}r^{\frac{\gamma_{b}(s-1)}{s}}r^{\frac{1}{s}\left(\beta-\frac{n(s-p)}{p(1+\delta_{g})}\right)}\left(\mint_{B_{2r}}E^{-p}b_{i}(B_{2r})\snr{Du}^{s} \ dx\right)^{\frac{s-1}{s}}\le cr^{\frac{\gamma_{b}(s-1)}{s}+\frac{1}{s}\left(\beta-\frac{n(s-p)}{p(1+\delta_{g})}\right)}\nr{D\varphi}_{L^{\infty}(B_{r})}},
\end{flalign}
with $c_{1}=c_{1}(n,p,s,[b]_{0,\beta},\beta,\nr{H(\cdot,Du)}_{L^{1+\delta_{g}}(\Omega_{0})})$, and
\begin{flalign}\label{h23}
\oh{(\mathrm{II})_{\texttt{ndeg}}}\le& \cd{c\nr{D\varphi}_{L^{\infty}(B_{r})}E^{\frac{q-p}{q}}r^{\frac{\alpha}{q}+\frac{\gamma_{a}(q-1)}{q}}\left(\mint_{B_{r}}(E^{q-p}r^{\alpha-\gamma_{a}})^{\frac{q-1}{q}}\snr{Dv}^{q-1} \ dx\right)}\nonumber \\
\le &\cd{c\nr{D\varphi}_{L^{\infty}(B_{r})}r^{\frac{\gamma_{a}(q-1)}{q}}r^{\frac{1}{q}\left(\alpha-\frac{n(q-p)}{p(1+\delta_{g})}\right)}\left(\mint_{B_{2r}}E^{-p}a_{i}(B_{2r})\snr{Du}^{q} \ dx\right)^{\frac{q-1}{q}}\le cr^{\frac{\gamma_{a}(q-1)}{q}+\frac{1}{q}\left(\alpha-\frac{n(q-p)}{p(1+\delta_{g})}\right)}\nr{D\varphi}_{L^{\infty}(B_{r})}},
\end{flalign}
where $c_{2}=c_{2}(n,p,q,[a]_{0,\alpha},\alpha,\nr{H(\cdot,Du)}_{L^{1+\delta_{g}}(\Omega_{0})})$.\\
\cd{Let $\tilde{\sigma}_{0}:=\frac{1}{2}\min\left\{\frac{\gamma_{a}(q-1)}{q}+\frac{1}{q}\left(\alpha-\frac{n(q-p)}{p(1+\delta_{g})}\right),\frac{\gamma_{b}(s-1)}{s}+\frac{1}{s}\left(\beta-\frac{n(s-p)}{p(1+\delta_{g})}\right)\right\}>0$ and }fix another threshold radius $\tilde{R}^{4}_{*}$ such that $\max\{c_{1},c_{2}\}(\tilde{R}^{4}_{*})^{\tilde{\sigma}_{0}}\le\frac{1}{2}$ and assume that $0<r\le \min\{\tilde{R}^{1}_{*},\tilde{R}^{2}_{*},\tilde{R}^{3}_{*},\tilde{R}^{4}_{*},1\}$. In correspondence of such a choice, by \eqref{h22} and \eqref{h23} we can conclude that
\begin{flalign}\label{h24}
\left |\ \mint_{B_{r}}D\tilde{H}_{0}(Dv)\cdot D\varphi \ dx \  \right |\le &r^{\tilde{\sigma}_{0}}\nr{D\varphi}_{L^{\infty}(B_{r})},
\end{flalign}
so the assumptions of Lemma \ref{quahar} are satisfied and there exists a $\tilde{H}_{0}$-harmonic map $\tilde{h}_{0}$ satisfying in particular \eqref{har6}. Clearly, if $h_{0}=E\tilde{h}_{0}$, then $h_{0}$ is $H_{0}$-harmonic, $\left.h_{0} \right |_{\partial B_{r}}=\left.u \right |_{\partial B_{r}}$ and, by \eqref{har6}, 
\begin{flalign}\label{h25}
\mint_{B_{r}}&\oh{\mathcal{V}_{0}(Du,Dh_{0})^{2} \ dx}\le cr^{m_{0}}\mint_{B_{2r}}H(x,Du) \ dx,
\end{flalign}
where $c=c(n,p,q,s,[a]_{0,\alpha},[b]_{0,\beta})$ and $m_{0}=m_{0}(n,p,q,s,[a]_{0,\alpha},[b]_{0,\beta})$. Summarizing we got

\begin{flalign*}
&\cd{\texttt{deg}(B_{2r})\Rightarrow \mint_{B_{r}}\mathcal{V}_{0}^{p}(Du,Dh_{p})^{2} \ dx \le cr^{m_{p}}\mint_{B_{2r}}H(x,Du) \ dx} \\
&\cd{\texttt{deg}_{\alpha}(B_{2r})\Rightarrow \mint_{B_{r}}\mathcal{V}_{0}^{s}(Du,Dh_{s})^{2} \ dx \le cr^{m_{s}}\mint_{B_{2r}}H(x,Du) \ dx}\\
&\cd{\texttt{deg}_{\beta}(B_{2r})\Rightarrow \mint_{B_{r}}\mathcal{V}_{0}^{q}(Du,Dh_{q})^{2} \ dx \le cr^{m_{q}}\mint_{B_{2r}}H(x,Du) \ dx}\\
&\cd{\texttt{ndeg}(B_{2r})\Rightarrow \mint_{B_{r}}\mathcal{V}_{0}(Du,Dh_{0})^{2} \ dx \le cr^{m_{0}}\mint_{B_{2r}}H(x,Du) \ dx}
\end{flalign*}
where the above holds for $0<r\le \tilde{R}_{*}=\min\{\tilde{R}^{1}_{*},\tilde{R}^{2}_{*},\tilde{R}^{3}_{*},\tilde{R}^{4}_{*},1\}$, and all the quantities involved are as described before. Finally, for the sake of clarity, we let $m=\min\{m_{p},m_{q},m_{s},m_{0}\}$.
Now take a ball $B_{r}$ with $0<r\le \frac{1}{2}\tilde{R}_{*}$ such that $B_{2r}\Subset \Omega_{0}\Subset \Omega$. Fix $\tau_{p}\in \left(0,\frac{1}{8}\right)$ and assume $\texttt{deg}(B_{2r})$ and $\texttt{deg}(B_{4\tau_{p} r})$. We fix $\vartheta \in (0,n)$ and we estimate, by \eqref{degc}, Poincar\'e's inequality, Proposition \ref{p0} with $\varphi=H_{0}^{p}$, \eqref{control} and \eqref{h12},
\begin{flalign}\label{h26}
\int_{B_{2\tau_{p} r}}H(x,Du) \ dx \le & c\int_{B_{4\tau_{p}r}}H_{0}^{p}\left(\frac{u-(u)_{4\tau_{p}r}}{4\tau_{p}r}\right) \ dx\le \oh{c\int_{B_{4\tau_{p}r}}H_{0}^{p}(Du) \ dx}\nonumber \\
\le& c\left\{\oh{\int_{B_{4\tau_{p}r}}\mathcal{V}_{0}^{p}(Du,Dh_{p})^{2} \ dx}+\int_{B_{4\tau_{p}r}}\snr{V_{p}(Dh_{p})}^{2} \ dx\right\}\nonumber \\
\le& c\left\{\oh{\int_{B_{4\tau_{p}r}}\mathcal{V}_{0}^{p}(Du,Dh_{p})^{2} \ dx}+\oh{\snr{B_{4\tau_{p}r}}\sup_{B_{4\tau_{p}r}}H_{0}^{p}(Dh_{p})}\right\}\nonumber \\
\le &c\left\{\int_{B_{r}}\oh{\mathcal{V}_{0}^{p}(Du,Dh_{p})^{2}} \ dx+\tau^{n}_{p}\int_{B_{r}}H^{p}_{0}(Dh_{p}) \ dx\right\}\nonumber \\
\le & \tau^{n-\vartheta}_{p}\left(cr^{m}\tau^{\vartheta-n}+c\tau^{\vartheta}_{p}\right)\int_{B_{2r}}H(x,Du) \ dx,
\end{flalign}
where $c=c(\texttt{data}(\Omega_{0}),\vartheta)$. For the ease of exposition we set $2r=\rho$ and adjusting the constants in \eqref{h26} we get
\begin{flalign*}
\int_{B_{\tau_{p}\rho}}H(x,Du) \ dx \le \tau_{p}^{n-\vartheta}\left(c\rho^{m}\tau_{p}^{\vartheta-n}+c\tau^{\vartheta}_{p}\right)\int_{B_{\rho}}H(x,Du) \ dx.
\end{flalign*}
Selecting $\tau_{p}$ in such a way that $c\tau_{p}^{\vartheta}\le \frac{1}{2}$ and a threshold radius $R_{*}^{1}\in(0,\tilde{R}_{*}]$ such that $cR^{m}\tau_{p}^{\vartheta-n}\le\frac{1}{2}$, we can conclude that, for all $\rho\in (0,R^{1}_{*})$ and all $\vartheta \in (0,n)$,
\begin{flalign}\label{h27}
\int_{B_{\tau_{p}\rho}}H(x,Du) \ dx \le \tau_{p}^{n-\vartheta}\int_{B_{\rho}}H(x,Du) \ dx.
\end{flalign}
Now fix $\tau_{s}\in \left(0,\frac{1}{8}\right)$, assume $\texttt{deg}_{\alpha}(B_{2r})$ and that $a_{i}(B_{4\tau_{s}r})\le 4[a]_{0,\alpha}(4\tau_{s}r)^{\alpha-\gamma_{a}}$, where $r<\frac{1}{2}R^{1}_{*}$. For $\vartheta \in (0,n)$, by \eqref{degac}, Poincar\'e's inequality, Proposition \ref{p0} with $\varphi=H_{0}^{s}$, \eqref{control} and \eqref{h17} we obtain
\begin{flalign}\label{h28}
\int_{B_{2\tau_{s}r}}H(x,Du) \ dx \le & c\int_{B_{4\tau_{s}r}}H_{0}^{s}\left(\frac{u-(u)_{4\tau_{s}r}}{4\tau_{s}r}\right) \ dx\le \oh{\int_{B_{4\tau_{s}r}}H_{0}^{s}(Du) \ dx}\nonumber \\
\le& c\left\{\int_{B_{4}\tau_{s}r}\oh{\mathcal{V}_{0}^{s}(Du,Dh_{s})^{2}} \ dx+\int_{B_{4\tau_{s}r}}H_{0}^{s}(Dh_{s}) \ dx\right\}\nonumber \\
\le& c\left\{\int_{B_{4}\tau_{s}r}\oh{\mathcal{V}_{0}^{s}(Du,Dh_{s})^{2}} \ dx+\oh{\snr{B_{4\tau_{s}r}}\sup_{B_{4\tau_{s}r}}H_{0}^{s}(Dh_{s})}\right\}\nonumber \\
\le& c\left\{\int_{B_{r}}\oh{\mathcal{V}_{0}^{s}(Du,Dh_{s})^{2}} \ dx+\tau_{s}^{n}\int_{B_{r}}H_{0}^{s}(Dh_{s}) \ dx\right\}\nonumber \\
\le& \tau^{n-\vartheta}_{s}\left(cr^{m}\tau_{s}^{\vartheta-n}+c\tau_{s}^{\vartheta}\right)\int_{B_{2r}}H(x,Du) \ dx,
\end{flalign}
where $c=c(\texttt{data}(\Omega_{0}),\vartheta)$. Again, we name $\rho=2r$ thus getting
\begin{flalign*}
\int_{B_{\tau_{s}\rho}}H(x,Du) \ dx \le \tau_{s}^{n-\vartheta}\left(c\rho^{m}\tau_{s}^{\vartheta-n}+\tau^{\vartheta}_{s}\right)\int_{B_{\rho}}H(x,Du) \ dx,
\end{flalign*}
where, as before, $\vartheta \in (0,n)$ is arbitrary. Choose $\tau_{s}$ small enough so that $c\tau^{\vartheta}_{s}<\frac{1}{2}$ and a threshold $R^{2}_{*}$, $0<R^{2}_{*}\le R^{1}_{*}$ such that $c(R^{1}_{*})^{m}\tau^{\vartheta-n}_{s}\le \frac{1}{2}$. Hence, for all $\rho \in (0,R^{2}_{*}]$ and all $\vartheta \in (0,n)$ we get
\begin{flalign}\label{h29}
\int_{B_{\tau_{s}\rho}}H(x,Du) \ dx \le \tau_{s}^{n-\vartheta}\int_{B_{\rho}}H(x,Du) \ dx.
\end{flalign}
Consider $\tau_{q}\in \left(0,\frac{1}{8}\right)$, assume $\texttt{deg}_{\beta}(B_{2r})$ and that $b_{i}(B_{4\tau_{q}r})\le 4[b]_{0,\beta}(4\tau_{q}r)^{\beta-\gamma_{b}}$, where $r<\frac{1}{2}R^{2}_{*}$. For $\vartheta \in (0,n)$, by \eqref{degbc}, Poincar\'e's inequality, Proposition \ref{p0} with $\varphi=H_{0}^{q}$ and \eqref{h21} we obtain
\begin{flalign}\label{h30}
\int_{B_{2\tau_{q}r}}H(x,Du) \ dx \le & c\int_{B_{4\tau_{q}r}}H_{0}^{q}\left(\frac{u-(u)_{4\tau_{q}r}}{4\tau_{q}r}\right) \ dx\le \oh{c\int_{B_{4\tau_{s}t}}H_{0}^{q}(Du) \ dx}\nonumber \\
\le & c\left\{\int_{B_{4\tau_{q}r}}\oh{\mathcal{V}_{0}^{q}(Du,Dh_{q})^{2}} \ dx +\int_{B_{4\tau_{q}r}}H_{0}^{q}(Dh_{q}) \ dx\right\}\nonumber \\
\le & c\left\{\int_{B_{4\tau_{q}r}}\oh{\mathcal{V}_{0}^{q}(Du,Dh_{q})^{2}} \ dx +\oh{\snr{B_{4\tau_{q}r}}\sup_{B_{4\tau_{q}r}}H_{0}^{q}(Dh_{q})}\right\}\nonumber \\
\le & c\left\{\int_{B_{r}}\oh{\mathcal{V}_{0}^{q}(Du,Dh_{0})^{2}} \ dx +\tau_{q}^{n}\int_{B_{r}}H_{0}^{q}(Dh_{q}) \ dx\right\}\nonumber \\
\le& \tau^{n-\vartheta}_{q}\left(cr^{m}\tau_{q}^{\vartheta-n}+c\tau_{q}^{\vartheta}\right)\int_{B_{2r}}H(x,Du) \ dx,
\end{flalign}
where $c=c(\texttt{data}(\Omega_{0}),\vartheta)$. Again, we set $\rho=2r$ thus obtaining
\begin{flalign*}
\int_{B_{\tau_{q}\rho}}H(x,Du) \ dx \le \tau_{q}^{n-\vartheta}\left(c\rho^{m}\tau_{q}^{\vartheta-n}+\tau^{\vartheta}_{q}\right)\int_{B_{\rho}}H(x,Du) \ dx,
\end{flalign*}
where, as before, $\vartheta \in (0,n)$ is arbitrary. Take $\tau_{q}$ sufficiently small so that $c\tau^{\vartheta}_{q}<\frac{1}{2}$ and a threshold $R^{3}_{*}$, $0<R^{3}_{*}\le R^{2}_{*}$ such that $c(R^{3}_{*})^{m}\tau^{\vartheta-n}_{q}\le \frac{1}{2}$. Hence, for all $\rho \in (0,R^{3}_{*}]$ and all $\vartheta \in (0,n)$ we get
\begin{flalign}\label{h31}
\int_{B_{\tau_{q}\rho}}H(x,Du) \ dx \le \tau_{q}^{n-\vartheta}\int_{B_{\rho}}H(x,Du) \ dx.
\end{flalign}
Finally, select $\tau_{0}\in \left(0,\frac{1}{8}\right)$, assume $\texttt{ndeg}_{\beta}(B_{2r})$, where $r\le\frac{1}{2}R^{3}_{*}$. For $\vartheta \in (0,n)$, by \eqref{ndegc}, Poincar\'e's inequality, Proposition \ref{p0} with $\varphi = H_{0}$, \eqref{control} and \eqref{h25} we obtain
\begin{flalign}\label{h32}
\int_{B_{2\tau_{0}r}}H(x,Du) \ dx \le & c\int_{B_{4\tau_{0}r}}H_{0}\left(\frac{u-(u)_{4\tau_{0}r}}{4\tau_{0}r}\right) \ dx\le \oh{c\int_{B_{4\tau_{0}s}}H_{0}(Du) \ dx}\nonumber \\
\le &c\left\{\int_{B_{4\tau_{0}r}}\oh{\mathcal{V}_{0}(Du,Dh_{0})^{2}} \ dx+\int_{B_{4\tau_{0}r}}H_{0}^{s}(Dh_{0}) \ dx\right\} \nonumber \\
\le &c\left\{\int_{B_{4\tau_{0}r}}\oh{\mathcal{V}_{0}(Du,Dh_{0})^{2}} \ dx+\oh{\snr{B_{4\tau_{0}r}}\sup_{B_{4\tau_{0}r}}H_{0}(Dh_{0})}\right\} \nonumber \\
\le &c\left\{\int_{B_{r}}\oh{\mathcal{V}_{0}(Du,Dh_{0})^{2}}\ dx+\tau_{0}^{n}\int_{B_{r}}H_{0}^{s}(Dh_{0}) \ dx\right\}\nonumber\\ 
\le& \tau^{n-\vartheta}_{0}\left(cr^{m}\tau_{0}^{\vartheta-n}+c\tau_{0}^{\vartheta}\right)\int_{B_{2r}}H(x,Du) \ dx,
\end{flalign}
where $c=c(\texttt{data}(\Omega_{0}),\vartheta)$. Again, we set $\rho=2r$ thus obtaining
\begin{flalign*}
\int_{B_{\tau_{0}\rho}}H(x,Du) \ dx \le \tau_{0}^{n-\vartheta}\left(c\rho^{m}\tau_{0}^{\vartheta-n}+\tau^{\vartheta}_{0}\right)\int_{B_{\rho}}H(x,Du) \ dx,
\end{flalign*}
where, as before, $\vartheta \in (0,n)$ is arbitrary. Take $\tau_{0}$ sufficiently small so that $c\tau^{\vartheta}_{0}<\frac{1}{2}$ and a threshold $R^{4}_{*}$, $0<R^{4}_{*}\le R^{3}_{*}$ such that $c(R^{4}_{*})^{m}\tau^{\vartheta-n}_{0}\le \frac{1}{2}$. Hence, for all $\rho \in (0,R^{4}_{*}]$ and all $\vartheta \in (0,n)$ we get
\begin{flalign}\label{h34}
\int_{B_{\tau_{0}\rho}}H(x,Du) \ dx \le \tau_{0}^{n-\vartheta}\int_{B_{\rho}}H(x,Du) \ dx.
\end{flalign}

\cd{\emph{Step 2: double nested exit time and iteration}}\\
Now we are in position to develop the announced double nested exit time.\\
Take $B_{r}\Subset \Omega$ with $r \in (0,R_{*}]$, where $R_{*}=\min_{i \in \{1,2,3,4\}} \{R_{*}^{i}\}$ and consider $0<\rho<r$. For $\kappa \in \N\cup \{0\}$, we consider condition $\texttt{deg}(B_{\cd{2}\tau_{p}^{\kappa+1}r})$ and define the exit time index
\begin{flalign*}
t_{p}=\min\left\{\kappa \in \N\colon \ \texttt{deg}(B_{\cd{2}\tau_{p}^{\kappa+1}r}) \ \mathrm{fails}\right\}.
\end{flalign*}
For any $\kappa \in \{1,\cdots,t_{p}\}$ we apply repeatedly \eqref{h27} to obtain
\begin{flalign}\label{h35}
\int_{B_{\tau_{p}^{\kappa}r}}H(x,Du) \ dx \le \tau_{p}^{\kappa(n-\vartheta)}\int_{B_{r}}H(x,Du) \ dx.
\end{flalign}
The failure of $\texttt{deg}(B_{\cd{2}\tau_{p}^{\kappa+1}r})$ at $\kappa=t_{p}$, opens three different scenarios: either $\texttt{deg}_{\alpha}(B_{\cd{2}\tau_{p}^{t_{p}+1}r})$ or $\texttt{deg}_{\beta}(B_{\cd{2}\tau_{p}^{t_{p}+1}r})$ or directly $\texttt{ndeg}(B_{\cd{2}\tau_{p}^{t_{p}+1}r})$ is in force. Since the last condition is stable, and the first two are described by similar procedures, we shall focus on the occurrence of $\texttt{deg}_{\alpha}(B_{\cd{2}\tau_{p}^{t_{p}+1}r})$. Let us introduce a second exit time index
\begin{flalign*}
t_{s}=\min\left\{\kappa \in \N\colon \texttt{deg}_{\alpha}(B_{\cd{2}\tau_{s}^{\kappa+1}\tau_{p}^{t_{p}+1}r}) \ \mathrm{fails}\right\}.
\end{flalign*}
Iterating \eqref{h29} we obtain
\begin{flalign}\label{h36}
\int_{B_{\tau_{s}^{\kappa}\tau_{p}^{t_{p}+1}r}}H(x,Du) \ dx \le \tau_{s}^{\kappa(n-\vartheta)}\int_{B_{\tau_{p}^{t_{p}+1}r}}H(x,Du) \ dx.
\end{flalign}
If $\texttt{deg}_{\alpha}(B_{\cd{2}\tau_{s}^{\kappa+1}\tau_{p}^{t_{p}+1}r})$ fails at $\kappa=t_{s}$, the only chance we have is to look at $\texttt{ndeg}(B_{\cd{2}\tau_{s}^{t_{s}+1}\tau_{p}^{t_{p}+1}r})$. Condition $\texttt{ndeg}$ is stable, so we can iterate \eqref{h34} for $\kappa \in \N$, thus getting
\begin{flalign}\label{h37}
\int_{B_{\tau_{0}^{\kappa}\tau_{s}^{t_{s}+1}\tau_{p}^{t_{p}+1}r}}H(x,Du) \ dx \le \tau_{0}^{\kappa(n-\vartheta)}\int_{B_{\tau_{s}^{t_{s}+1}\tau_{p}^{t_{p}+1}r}}H(x,Du) \ dx.
\end{flalign}
Now we only need to fillet estimates \eqref{h34}-\eqref{h37}. For $0<\rho<r\le R_{*}$ we consider \oh{the following} \cd{five} cases.\\
\emph{\oh{Case (i):} $r>\rho\ge \tau_{p}^{t_{p}+1}r$}. Then there is $\bar{\kappa}\in \{0,\cdots,t_{p}\}$ such that $\tau_{p}^{\bar{\kappa}+1}r\le \rho< \tau_{p}^{\bar{\kappa}}r$. \oh{We obtain from \eqref{h35} that},
\begin{flalign}\label{h38}
\int_{B_{\rho}}H(x,Du) \ dx \le& \int_{B_{\tau_{p}^{\bar{\kappa}}r}}H(x,Du) \ dx \nonumber \\
\le&\tau_{p}^{\bar{\kappa}(n-\vartheta)}\int_{B_{r}}H(x,Du) \ dx\nonumber\\
\le &\tau_{p}^{(\bar{\kappa}+1)(n-\vartheta)}\tau_{p}^{\vartheta-n}\int_{B_{r}}H(x,Du) \ dx\le c\left(\frac{\rho}{r}\right)^{n-\vartheta}\int_{B_{r}}H(x,Du) \ dx,
\end{flalign}
where $c=c(\texttt{data}(\Omega_{0}),\vartheta)$.\\
\emph{\oh{Case (ii):} $\tau_{p}^{t_{p}+1}r> \rho\ge  \tau_{s}\tau_{p}^{t_{p}+1}r$}. \oh{We see that, by \eqref{h38}},
\begin{flalign}\label{h39}
\int_{B_{\rho}}H(x,Du) \ dx \le & \int_{B_{\tau_{p}^{t_{p}+1}r}}H(x,Du) \ dx\nonumber \\
\le &c\tau_{p}^{(t_{p}+1)(n-\vartheta)}\int_{B_{r}}H(x,Du) \ dx\nonumber \\
=&c(\tau_{s}\tau_{p}^{t_{p}+1})^{n-\vartheta}\tau_{s}^{\vartheta-n}\int_{B_{r}}H(x,Du) \ dx \le c\left(\frac{\rho}{r}\right)^{n-\vartheta}\int_{B_{r}}H(x,Du)\ dx,
\end{flalign}
with $c=c(\texttt{data}(\Omega_{0}),\vartheta)$.\\
\emph{\oh{Case (iii):} $\tau_{s}\tau_{p}^{t_{p}+1}r>\rho\ge \tau_{s}^{t_{s}+1}\tau_{p}^{t_{p}+1}r$}. So there is $\bar{\kappa}\in\{1, \cdots,t_{s}\}$ so that $\tau_{s}^{\bar{\kappa}}\tau_{p}^{t_{p}+1}r>\rho\ge \tau_{s}^{\bar{\kappa}+1}\tau_{p}^{t_{p}+1}r$. We have, by \eqref{h36} and \eqref{h38},
\begin{flalign}\label{h40}
\int_{B_{\rho}}H(x,Du) \ dx \le & \int_{B_{\tau_{s}^{\bar{\kappa}}\tau_{p}^{t_{p}+1}r}}H(x,Du) \ dx\nonumber \\
\le & \tau_{s}^{\bar{\kappa}(n-\vartheta)}\int_{B_{\tau_{p}^{t_{p}+1}r}}H(x,Du) \ dx \nonumber \\
\le & \tau_{s}^{(\bar{\kappa}+1)(n-\vartheta)}\tau_{s}^{\vartheta-n}\tau_{p}^{(t_{p}+1)(n-\vartheta)}\int_{B_{r}}H(x,Du) \ dx\le c\left(\frac{\rho}{r}\right)^{n-\vartheta}\int_{B_{r}}H(x,Du) \ dx,
\end{flalign}
where $c=c(\texttt{data}(\Omega_{0}),\vartheta)$.\\
\emph{\oh{Case (iv):}} $\tau_{s}^{t_{s}+1}\tau_{p}^{t_{p}+1}r>\rho\ge\tau_{s}^{t_{s}+1}\tau_{p}^{t_{p}+1}\tau_{0}r$. By \eqref{h40} we obtain
\begin{flalign}\label{xx}
\int_{B_{\rho}}H(x,Du) \ dx \le & \int_{B_{\tau_{s}^{t_{s}+1}\tau_{p}^{t_{p}+1}r}}H(x,Du) \ dx \nonumber \\
\le &c (\tau_{s}^{t_{s}+1}\tau_{p}^{t_{p}+1})^{n-\vartheta}\int_{B_{r}}H(x,Du) \ dx\nonumber\\
\le & c\tau_{0}^{\vartheta-n}(\tau_{0}\tau_{s}^{t_{s}+1}\tau_{p}^{t_{p}+1})^{n-\vartheta}\int_{B_{r}}H(x,Du) \ dx \le c\left(\frac{\rho}{r}\right)^{n-\vartheta}\int_{B_{r}}H(x,Du) \ dx,
\end{flalign}
with $c=c(\texttt{data}(\Omega_{0}),\vartheta)$.\\
\cd{\emph{Case (v): $\tau_{s}^{t_{s}+1}\tau_{p}^{t_{p}+1}\tau_{0}r>\rho>0$}. This condition renders a $\bar{\kappa}\in \N$ such that
$\tau_{0}^{\bar{\kappa}+1}\tau_{s}^{t_{s}+1}\tau_{p}^{t_{p}+1}\le \rho<\tau_{0}^{\bar{\kappa}}\tau_{s}^{t_{s}+1}\tau_{p}^{t_{p}+1}$. We then estimate, using \eqref{h37} and \eqref{xx},
\begin{flalign}\label{h41}
\int_{B_{\rho}}H(x,Du) \ dx \le & \int_{B_{\tau_{0}^{\bar{\kappa}}\tau_{s}^{t_{s}+1}\tau_{p}^{t_{p}+1}r}}H(x,Du) \ dx\nonumber \\
\le &\tau_{0}^{\bar{\kappa}(n-\vartheta)}\int_{B_{\tau_{s}^{t_{s}+1}\tau_{p}^{t_{p}+1}r}}H(x,Du) \ dx\nonumber \\
\le & c\tau_{0}^{\bar{\kappa}(n-\vartheta)}\tau_{s}^{(t_{s}+1)(n-\vartheta)}\tau_{p}^{(t_{p}+1)(n-\vartheta)}\int_{B_{r}}H(x,Du) \ dx \nonumber \\
\le & \tau_{0}^{\vartheta-n}\left(\frac{\rho}{r}\right)^{n-\vartheta}\int_{B_{r}}H(x,Du) \ dx=c\left(\frac{\rho}{r}\right)^{n-\vartheta}\int_{B_{r}}H(x,Du) \ dx,
\end{flalign}
where $c=c(\texttt{data}(\Omega_{0}),\vartheta)$.}\\

As mentioned before, the procedure is the same if, after $\texttt{deg}$ occurs $\texttt{deg}_{\beta}$ instead of $\texttt{deg}_{\alpha}$ and it is actually easier if, from $\texttt{deg}$ we jump directly to $\texttt{ndeg}$.\\
All in all we can conclude that, for all $0<\rho<r\le R_{*}$ and all $\vartheta \in (0,n)$ there holds
\begin{flalign}\label{h42}
\int_{B_{\rho}}H(x,Du) \ dx \le c\left(\frac{\rho}{r}\right)^{n-\vartheta}\int_{B_{r}}H(x,Du) \ dx,
\end{flalign}
with $c=c(\texttt{data}(\Omega_{0}),\vartheta)$. Now, if $r>R_{*}$ and $R_{*}\le \rho<r\le 1$ we get
\begin{flalign}\label{h43}
\int_{B_{\rho}}H(x,Du) \ dx \le & \left(\frac{\rho}{r}\right)^{n-\vartheta}\left(\frac{r}{\rho}\right)^{n-\vartheta}\int_{B_{r}}H(x,Du) \ dx\nonumber \\
\le &\left(\frac{r}{R_{*}}\right)^{n-\vartheta}\left(\frac{\rho}{r}\right)^{n-\vartheta}\int_{B_{r}}H(x,Du) \ dx \le c\left(\frac{\rho}{r}\right)^{n-\vartheta}\int_{B_{r}}H(x,Du) \ dx,
\end{flalign}
where $c=c(\texttt{data}(\Omega_{0}),\vartheta)$. Finally, if $\rho<R_{*}\le r\le 1$, we have
\begin{flalign}\label{h44}
\int_{B_{\rho}}H(x,Du) \ dx \le & c\left(\frac{\rho}{R_{*}}\right)^{n-\vartheta}\int_{B_{R_{*}}}H(x,Du) \ dx \nonumber \\
\le &c\left(\frac{r}{R_{*}}\right)^{\vartheta-n}\left(\frac{\rho}{r}\right)^{n-\vartheta}\int_{B_{r}}H(x,Du) \ dx \le c\left(\frac{\rho}{r}\right)^{n-\vartheta}\int_{B_{r}}H(x,Du) \ dx,
\end{flalign}
for $c=c(\texttt{data}(\Omega_{0}),\vartheta)$. Collecting estimates \eqref{h42}-\eqref{h44} we conclude that, for all $0<\rho<r\le 1$ there holds
\begin{flalign}\label{h45}
\int_{B_{\rho}}H(x,Du) \ dx \le c\left(\frac{\rho}{r}\right)^{n-\vartheta}\int_{B_{r}}H(x,Du) \ dx,
\end{flalign}
with $c=c(\texttt{data}(\Omega_{0}),\vartheta)$.

\section{Gradient continuity}
From \eqref{h45} and a standard covering argument, we can conclude that for every open subset $\Omega_{0}\Subset \Omega$ and $\kappa>0$ there exists a constant $c=c(\texttt{data}(\Omega_{0}),\kappa)$ such that
\begin{flalign}\label{h46}
\mint_{B_{r}}H(x,Du) \ dx \le cr^{-\kappa}
\end{flalign}
holds for every ball $B_{r}\Subset \Omega_{0}\Subset \Omega$, $r\le 1$. \cd{Now, if $h$ is any of the maps given by Lemma \ref{quahar} and $\oh{\tilde{H}}$ is one of the Young functions listed in \eqref{yfs} with $a_{0}=a_{i}(B_{2r})$ or $b_{0}=b_{i}(B_{2r})$, then, the theory in \cite{lieb} applies, thus rendering
\begin{flalign}\label{h47}
\mint_{B_{\rho}}\tilde{H}(Dh-(Dh)_{B_{\rho}})\ dx\le c\left(\frac{\rho}{r}\right)^{p\tilde{\nu}}\mint_{B_{r}}\tilde{H}(Dh) \ dx\le c\left(\frac{\rho}{r}\right)^{p\tilde{\nu}}\mint_{B_{r}}H(x,Du) \ dx,
\end{flalign}
where $c$ and $\tilde{\nu}$ depend at the most from $n,p,q,s$.}

Moreover, for $B_{r}\Subset \Omega_{0}$ with $0<r\le R_{*}$, where $R_{*}$ is the threshold radius introduced in the previous section, \oh{we obtain from Lemma \ref{quahar} and \eqref{h46} that}
\begin{flalign*}
\mint_{B_{r}}\oh{\widetilde{\mathcal{V}}(Du,Dh)^{2}} \ dx \le cr^{m}\mint_{B_{2r}}H(x,Du) \ dx \le cr^{m-\kappa}=cr^{\oh{\kappa_{0}}},
\end{flalign*}
by suitably fixing $0<\kappa<m$\oh{, where $\widetilde{\mathcal{V}}$ is the corresponding auxiliary function defined in \eqref{vfs} and the constant $c$ depends on $\texttt{data}(\Omega_{0})$.
Arguing exactly as in \cite[Section 10]{bcm}, we get
\begin{flalign}\label{h48}
\mint_{B_{r}}\tilde{H}(Du-Dh) \ dx \le cr^{\kappa_{1}}
\end{flalign}
for some positive exponent $\kappa_{1} = \kappa_{1}(\kappa_{0},n,p,q,s)$.}
\cd{In this case, $c=c(\texttt{data}(\Omega_{0}))$}. Now, for $0<\rho<r\le R_{*}$, by \eqref{h48}, the minimality of $h$, \eqref{h46} and \eqref{h47} we see that
\begin{flalign}\label{h49}
\mint_{B_{\rho}}\snr{Du-(Du)_{\rho}}^{p} \ dx \le & c\left \{\mint_{B_{\rho}}\snr{Dh-(Dh)_{\rho}}^{p} \ dx+\mint_{B_{\rho}}\snr{Du-Dh}^{p} \ dx\right\}\nonumber \\
\le & c\left\{\mint_{B_{\rho}}\oh{\tilde{H}}(Dh-(Dh)_{\rho}) \ dx+\left(\frac{r}{\rho}\right)^{n}\mint_{B_{r}}\oh{\tilde{H}}(Du-Dh) \ dx\right\}\nonumber \\
\le &c\left\{\cd{\left(\frac{\rho}{r}\right)^{p\tilde{\nu}}\mint_{B_{r}}H(x,Du) \ dx+\left(\frac{r}{\rho}\right)^{n}r^{m}\mint_{B_{2r}}H(x,Du) \ dx}\right\}\nonumber\\
\le& c\left(\rho^{p\tilde{\nu}}r^{-p\tilde{\nu}-\kappa}+\rho^{-n}r^{n+\kappa_{1}}\right),
\end{flalign}
with $c=c(\texttt{data}(\Omega_{0}),\kappa)$. Now, first notice that there is no loss of generality in supposing $p\tilde{\nu}\le 1$. Setting $\rho=r^{1+\frac{\kappa_{1}}{4n}}$ and $\kappa=\frac{\kappa_{1}p\tilde{\nu}}{8n}$ in \eqref{h49}, we easily obtain
\begin{flalign}\label{h50}
\mint_{B_{\rho}}\snr{Du-(Du)_{\rho}}^{p} \ dx \le c\rho^{\frac{\kappa_{1}p\tilde{\nu}}{16n}},
\end{flalign}
for all $\rho \in (0,R_{*})$, with $c=c(\texttt{data}(\Omega_{0}))$. Now, by the integral characterization of H\"older continuity due to Campanato and Meyers we can conclude that $Du \in C^{0,\nu}_{\mathrm{loc}}(\Omega,\mathbb{R}^{n})$ for $\nu=\frac{\kappa_{1}\tilde{\nu}}{16n}$. The full proof of Theorem \ref{T1} is still not complete, since $\nu$ depends on $\texttt{data}(\Omega_{0})$, while we announced that the H\"older continuity exponent of $Du$ depends only on $\texttt{data}$. So we will retain that, after a covering argument, $Du\in L^{\infty}_{\mathrm{loc}}(\Omega)$, therefore the non-uniform ellipticity of \eqref{hmvp} becomes immaterial. Now, for $B_{r}\Subset \Omega_{0}\Subset \Omega$, no matter what degeneracy condition holds, we compare $u$ to $h \in W^{1,H(\cdot)}(B_{r})$ solution to the Dirichlet problem
\begin{flalign}\label{final}
u+W^{1,H(\cdot)}_{0}(B_{r})\ni w\mapsto \min \int_{B_{r}}\snr{Dw}^{p}+a_{i}(B_{2r})\snr{Dw}^{q}+b_{i}(B_{2r})\snr{Dw}^{s} \ dx.
\end{flalign}
Notice that, for a functional like the one in \eqref{final}, the Bounded Slope Condition holds, see \cite{boubra}, so there exists $c=c(n,p,q,s,\nr{Du}_{L^{\infty}(B_{r})})$ such that  
\begin{flalign}\label{bsc}
\nr{Dh}_{L^{\infty}(B_{r})}\le c.
\end{flalign}
For simplicity, let us adopt the notation $H_{0}(z)=\snr{z}^{p}+a_{i}(B_{2r})\snr{z}^{q}+b_{i}(B_{2r})\snr{z}^{s}$. By strict convexity we obtain
\begin{flalign}\label{h51}
\mint_{B_{r}}&\cd{\mathcal{V}_{0}(Du,Dh)^{2}}\ dx\le  c \mint_{B_{r}}H_{0}(Du)-H_{0}(Dh) \ dx \nonumber \\
=&c\left\{ \mint_{B_{r}}H_{0}(Du)-H(x,Du) \ dx+\mint_{B_{r}}H(x,Du)-H(x,Dh) \ dx+\mint_{B_{r}}H(x,Dh)-H_{0}(Dh) \ dx\right\}\le cr^{\cd{\gamma}},
\end{flalign}
\cd{with $\gamma=\min\{\alpha,\beta\}$} and $c=c(p,q,s,[a]_{0,\alpha},[b]_{0,\beta},\nr{a}_{L^{\infty}(\Omega)},\nr{b}_{L^{\infty}(\Omega)},\nr{Du}_{L^{\infty}(\Omega_{0})})$. We got this last estimate by using \eqref{mathAB}, the boundedness of $\nr{Du}_{L^{\infty}_{\mathrm{loc}}(\Omega)}$ and \eqref{bsc}.
Now we jump back to \eqref{h49}, thus getting
\begin{flalign}\label{h54}
\mint_{B_{\rho}}\snr{Du-(Du)_{\rho}}^{p} \ dx\le &c\left\{\mint_{B_{\rho}}H(x,Du-Dh) \ dx +\left(\frac{\rho}{r}\right)^{p\tilde{\nu}}\mint_{B_{r}}H(x,Du) \ dx\right\}\le c\left(\rho^{-n}r^{n+\gamma}+\rho^{p\tilde{\nu}}r^{-p\tilde{\nu}}\right),
\end{flalign}
with $c=c(\texttt{data},\nr{Du}_{L^{\infty}(\Omega_{0})})$.
Equalizing in \eqref{h54} \cd{as we did to get \eqref{h49}}, we have
$$
\mint_{B_{\rho}}\snr{Du-(Du)_{\rho}}^{p} \ dx\le c\rho^{\nu p},
$$
\cd{with $\nu=\frac{\gamma\tilde{\nu}}{n+p\tilde{\nu}}$}. This means, by the integral characterization of H\"older continuity due to Campanato and Mayers, that $Du\in C^{0,\nu}_{\mathrm{loc}}(\Omega)$, \cd{and, recalling that $\tilde{\nu}=\tilde{\nu}(n,p,q,s)$, we see that now} $\nu=\nu(\texttt{data})$. This concludes the proof.

\end{document}